\theoremstyle{plain}
\theoremstyle{definition}
\renewcommand{\d}[1]{\ensuremath{\operatorname{d}\!{#1}}}
\newtheorem*{teoA}{Theorem A}
\newtheorem*{corollaryB}{Corollary B}
\newtheorem*{teoC}{Theorem C}
\newtheorem*{theo}{Theorem}
\newtheorem{defi}{Definition}[section]
\newtheorem{remark}{Remark}[section]
\newtheorem{claim}{Claim}[section]
\newtheorem{teo}{Theorem}[section]
\newtheorem{lemma}{Lemma}[section]
\newtheorem{prop}{Proposition}[section]
\newcommand{\N}{\mathbb{N}}
\newcommand{\R}{\mathbb{R}}
\newcommand{\mc}{\mathcal}
\newcommand{\Lcal}{\mathcal{L}}
\newcommand{\Fcal}{\mathcal{F}}
\newcommand{\tb}{\textbf}
\newcommand{\ti}{\textit}
\newcommand{\Jac}{\operatorname{Jac}}
\begin{document}

\address{Departamento de Matem\'atica - UFPE \\ Recife, PE, Brazil}
\title{Physical Measures for Certain Partially Hyperbolic Attractors on 3-Manifolds}
\author{Ricardo T. Bortolotti}
\email{ricardo@dmat.ufpe.br}
\thanks{The author was supported by CNPq and PRONEX-Dynamical Systems.}
\date{\today}

\maketitle

\begin{abstract}

In this work, we analyze ergodic properties of certain partially hyperbolic attractors whose central direction has a neutral behavior, the main feature is a condition of transversality between the projections of unstable leaves, projecting through the stable foliation. We prove that partial hyperbolic attractors satisfying this condition of transversality, neutrality in the central direction and regularity of the stable foliation admit a finite number of physical measures, coinciding with the ergodic u-Gibbs States, whose union of the basins has full Lebesgue measure. Moreover, we describe the construction of robustly nonhyperbolic attractors satisfying these properties.
\end{abstract}

\section{Introduction}\label{s.introduction}
In this work, we study ergodic properties for some attractors ``beyond uniform hyperbolicity". Our interest is in the existence and finiteness of physical measures for partially hyperbolic systems. The analysis of this kind of questions started with Sinai, Ruelle and Bowen in the case of hyperbolic systems \cite{Bowen,Bowen.Ruelle,Ruelle,Sinai} and was later extended for partially hyperbolic systems \cite{ABV,BV,Carvalho,Tsujii,VY}, Henon-like families \cite{Be.Ca,Be.Vi,Be.Yo}, Lorenz-like attractors \cite{Pesin,Sataev,Tucker} and many other types of systems \cite{araujo,lyubich,ls.young}. In most of the previous mentioned results, it is shown that the dynamical system admits a finite number of physical measures whose basins cover a full Lebesgue measure subset.

In \cite{Palis2}, Palis conjectured that every dynamical system can be approximated by another dynamical system having finitely many physical measures and the union of their basins has total Lebesgue measure. It is also possible to consider this Conjecture for the open set of partially hyperbolic systems. In the works of Bonatti-Viana \cite{BV} for ``mostly contracting'' and Alves-Bonatti-Viana \cite{ABV} for ``mostly expanding'' central direction, the existence and finiteness of physical measures for partially hyperbolic systems with some non-uniform contraction or expansion in the central direction is proved. However, the case when the central direction is neutral has not been well studied.

Also related to Palis' Conjecture, the existence of physical measures generic partially hyperbolic systems is not known. Therefore, it would be interesting to obtain results of genericity for generic partially hyperbolic systems.

Another relevant work is \cite{Tsujii}, where a generic approach is made in the case of surface endomorphism. Actually, in there the author proved that a generic partial hyperbolic endomorphism admits finitely many physical measures and the union of their basins has full Lebesgue measure. 

The present work is a first step to extend the analysis of Tsujii \cite{Tsujii} for the case of diffeomorphisms. More precisely, we consider partially hyperbolic attractors with central Lyapunov exponent close to zero and with a geometrical property of transversality between the projections of unstable leaves, projecting through the stable foliation, which implies the non joint-integrability of $E^{ss}\oplus E^{uu}$, and prove the existence of physical measures for this set of attractors.

We  summarize our results in the following theorem.

\begin{theo}
Given a diffeomorphism $f_0:M\rightarrow M$ of class $C^{r}$, $r\geq 2$, a three-dimensional manifold $M$ and  a partially hyperbolic attractor $\Lambda_0$ for $f_{0}$. If we assume that the attractor is dynamically coherent and the following conditions:

(H1) Transversality between unstable leaves via the stable foliation;

(H2) The central direction is neutral;

(H3') The stable foliation $\Fcal^{ss}$ is Lipschitz.

Then $f_{0}$ admits finite physical measures and the union of their basins has total Lebesgue measure in the basin of attraction of the attractor.

Moreover, if we assume that the attractor is robustly dynamically coherent and that

(H3) $\Fcal^{ss}_{f}$ varies continuously with $f$ in the $C^{1}$-topology.

Then, there exists an open set $\mathcal{U}$ containing $f_{0}$ such that the same result holds for every $f\in\mathcal{U}$.
\end{theo}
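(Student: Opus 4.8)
\emph{Proof strategy.} The plan is to prove that every ergodic u-Gibbs state of $f_0$ is absolutely continuous along the center-unstable leaves of $\Fcal^{cu}$, to deduce from this that these measures are precisely the physical measures and that there are only finitely many of them, and finally to show that the union of their basins has full Lebesgue measure in the basin of attraction; the open-set statement will then follow once one checks that all the constants involved can be chosen uniformly on a $C^{1}$-neighborhood of $f_0$.

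By the Pesin--Sinai construction, Cesàro--weak-$*$ accumulation points of forward iterates of normalized Lebesgue measure on a local strong-unstable disk are $f_0$-invariant probabilities supported on $\Lambda_0$ whose conditional measures along strong-unstable plaques are absolutely continuous (u-Gibbs states); the set of u-Gibbs states is nonempty, compact and convex, so it suffices to deal with its ergodic extreme points. Moreover, since $\Lambda_0$ attracts a neighborhood and the strong-stable foliation is automatically absolutely continuous for $C^{1+}$ systems, a standard argument (disintegrating Lebesgue measure along strong-stable leaves, pushing forward, and using domination) shows that for Lebesgue-almost every $x$ in the basin of attraction every accumulation point of the empirical measures $\frac1n\sum_{k=0}^{n-1}\delta_{f_0^k(x)}$ is a u-Gibbs state. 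The theorem thus reduces to the two assertions: (i) every ergodic u-Gibbs state is physical; (ii) there are finitely many of them, and Lebesgue-almost every point lies in the basin of exactly one.

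Step (i) is the heart of the proof, and the step I expect to be the main obstacle. Here one must use the transversality hypothesis (H1), together with neutrality of the center (H2) and Lipschitz regularity of $\Fcal^{ss}$ (H3'), to show that an ergodic u-Gibbs state $\mu$ has absolutely continuous conditional measures along the $2$-dimensional leaves of $\Fcal^{cu}$. This transposes to the diffeomorphism setting the transversality mechanism of Tsujii for surface endomorphisms \cite{Tsujii}: one iterates a local strong-unstable disk and projects its successive images onto a fixed center-unstable plaque $W^{cu}_{\mathrm{loc}}(x_0)$ along the strong-stable foliation; by (H3') this projection is Lipschitz with uniformly bounded Jacobian, so it carries the absolutely continuous unstable conditionals to measures supported on $C^{1}$ curves in $W^{cu}_{\mathrm{loc}}(x_0)$ with controlled densities, and (H1) forces these curves to cross the strong-unstable foliation of $W^{cu}_{\mathrm{loc}}(x_0)$ with a uniform, quantitative transversality. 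Neutrality (H2) is precisely what prevents the distortions, angles and transversality constants from degenerating under further iteration, since the center direction neither contracts nor expands and the iterated disks retain a definite center width. A transversality estimate in the spirit of Marstrand and of \cite{Tsujii} then bounds, uniformly along the Cesàro averages, the $L^{2}$-norm of the superposition of these projected measures, and passing to the limit shows that the $\Fcal^{cu}$-conditionals of $\mu$ lie in $L^{2}$, hence are absolutely continuous. Since the center Lyapunov exponent is essentially zero, this together with the absolute continuity of $\Fcal^{ss}$ yields, by a Hopf-type argument, that $\mu$ is physical, its basin containing a full-$cu$-measure subset of $W^{cu}_{\mathrm{loc}}(x)$ for $\mu$-almost every $x$, saturated by strong-stable leaves.

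For (ii), from the quantitative estimates of step (i) one extracts uniform constants $\delta_0,r_0>0$ such that the $\Fcal^{cu}$-density of any ergodic u-Gibbs state is at least $\delta_0$ on plaques of radius $r_0$ about typical points; then two distinct ergodic u-Gibbs states either share a positive-$cu$-measure subset of a center-unstable leaf, which is impossible for ergodic physical measures since their basins are disjoint, or each occupies a definite portion of $\Lambda_0$, so compactness bounds their number by some $N$. Combined with the reduction above, Lebesgue-almost every $x$ in the basin has empirical measures accumulating on convex combinations of the physical measures $\mu_1,\dots,\mu_N$, and an ergodicity argument along $W^{cu}_{\mathrm{loc}}$, using once more the absolute continuity of $\Fcal^{ss}$, forces $x$ into the basin of a single $\mu_i$; this proves the first assertion. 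For the last assertion, one checks that dynamical coherence is robust by hypothesis, that (H1) and (H2) persist on a $C^{1}$-neighborhood of $f_0$ (using (H3) and robust dynamical coherence), and that the constants $C,\varepsilon,\delta_0,r_0,N$ above can be chosen uniformly there; the same proof then applies to every $f$ in a suitable open set $\mathcal{U}\ni f_0$.
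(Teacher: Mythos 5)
Your step (i) follows, in spirit, the mechanism the paper actually uses (Sections 3--5): fix boxes with local stable projections $\pi_i$ onto center-unstable leaves, introduce the scale-$r$ quadratic norm $\|\nu\|_{W,r}$, and prove a Doeblin--Fortet inequality in which neutrality (H2) produces the contraction factor and transversality (H1) controls the cross terms between different branches of $f^{n}$; iterating gives that $(\pi_i)_*\mu$ is absolutely continuous with $L^{2}$ density for every ergodic u-Gibbs $\mu$, and physicality follows from $\Fcal^{ss}$-saturation of $B(\mu)$ together with absolute continuity of $\Fcal^{ss}$, with finiteness coming from the uniform lower bound on $m(B(\mu))$. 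Two caveats already here: absolute continuity of the projected (or conditional) measure only yields that $\pi_i(B(\mu))$ has \emph{positive} $m^{cu}$-measure, not a full-measure subset of center-unstable plaques, and there is no ``Hopf-type argument'' available along $W^{cu}$ because the center is neutral: Birkhoff averages are not constant along center leaves, so your claim of full-$cu$-measure basins is unsupported (and unnecessary — the positive-measure statement is all the paper uses). Also, (H1) is a mutual transversality between stable projections of \emph{distinct} unstable curves at definite stable distance, not transversality to the unstable foliation of the target leaf; this is what makes the off-diagonal terms in the quadratic norm small.

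The genuine gap is in your treatment of the basin problem. You invoke as ``standard'' that for Lebesgue-almost every $x$ in the basin of attraction every accumulation point of the empirical measures $\frac{1}{n}\sum_{k=0}^{n-1}\delta_{f^{k}(x)}$ is a u-Gibbs state. That is not a standard fact: what is standard (and what the paper uses, Proposition 2.2) is that accumulation points of the averaged push-forwards $\frac{1}{n}\sum_{j=0}^{n-1}f^{j}_{*}(m_E)$ of Lebesgue restricted to a positive-measure set $E$ are u-Gibbs; the pointwise version is essentially as hard as the basin problem you are trying to solve. Moreover, even granting it, your concluding step — ``an ergodicity argument along $W^{cu}_{\mathrm{loc}}$ forces $x$ into the basin of a single $\mu_i$'' — has no mechanism behind it, again because the neutral center gives no contraction along $W^{cu}$ with which to transport convergence of time averages between nearby points; accumulation on convex combinations of $\mu_1,\dots,\mu_N$ does not by itself place $x$ in a single basin. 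The paper closes the argument differently: assuming $E=B(\Lambda)\setminus\bigcup_i B(\mu_i)$ has positive Lebesgue measure, the Cesàro limit $\sigma_\infty$ of $\frac{1}{n}\sum_{j}f^{j}_{*}(m_E)$ is a u-Gibbs, hence $(\pi_j)_*\sigma_\infty$ is absolutely continuous with $L^{2}$ density; invariance of $E$ and of the union of the basins forces the densities of $(\pi_j)_*\sigma_{n_k}$ and of $(\pi_j)_*\sigma_\infty$ to have essentially disjoint supports, so their $L^{2}$ pairing vanishes, contradicting the convergence of this pairing to $\bigl\|\tfrac{d(\pi_j)_*\sigma_\infty}{dm^{cu}}\bigr\|_{L^{2}}^{2}>0$. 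You need either this orthogonality argument or a genuine substitute for the two unproved claims above; the robustness part (openness of (H1) under (H3), openness of (H2) and of dynamical coherence) is fine and matches the paper.
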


For precise definitions and statements of our results, see Section 2.

In the above theorem, the geometrical condition of transversality (H1) allow us to prove that u-Gibbs states of diffeomorphisms are sent by local stable projections into absolutely continuous measures. This step contains the technical part of our work and allow us  to prove that the ergodic u-Gibbs states are the physical measures.

In the last part of this work, we construct examples of partially hyperbolic attractors that are robustly nonhyperbolic and satisfy  conditions (H1), (H2) and (H3). 

We emphasize that our results include situations where the central Lyapunov exponent is null and, therefore, we can't use Pesin's theory.

It is natural to expect a weaker version of the property of transversality  that is generic among partially hyperbolic systems. Before proving such generic property, it is necessary to weaken the regularity assumptions for the stable foliation.

\subsection{Strategy of the proof}

The heart of the proof is an inequality similar to the Doeblin-Fortet inequality (also known as Lasota-Yorke inequality). We will work with a norm for finite measures defined in center-unstable manifolds that is similar to the $L^{2}$ norm for the densities of measures when they are absolutely continuous. Our aim is to see that every u-Gibbs state has certain regularity after  projecting locally by the stable foliation, which will imply that the ergodic u-Gibbs states are the physical measures.

The transversality condition plays an important role because the density of the u-Gibbs state is good \ti{a priori} only in the unstable direction, then, if the local stable projection of unstable discs give many directions, we will obtain some mass in many directions, which will imply that the projected measure into center-unstable leaves is absolutely continuous.

\subsection{Structure of the Work}

This work is organized as follows. In Section 2, we give the basic definitions and precise statements of our results. In Section 3, we define the boxes and norms that shall be used to state the Main Inequality and the technical Lemmas. In Section 4, we prove estimates that will be used to prove the Main Inequality. In Section 5, we use the Main Inequality to show that every ergodic u-Gibbs state has better regularity after projecting via local stable projections into center-unstable leaves,  we also show the existence and finiteness of physical measures and that the union of their basins has full Lebesgue measure.  Finally, in Section 6, we describe the construction of nonhyperbolic attractors that have central direction close to neutral and satisfies the transversality condition. We also check that they are robustly transitive and robustly nonhyperbolic.

\subsection*{Acknowledgements}
The author thanks Marcelo Viana for his orientation during the elaboration of his thesis, Martin Andersson, Jiagang Yang and Enrique Pujals for useful conversations.  The author would also like to thank IMPA for the kind hospitality during the period he stayed there as a student.

\section{Definitions and Statements}\label{s.statement}

In this Section, we give some definitions and state our main results.

\subsection{Prerequisites}

Let $M$ be a compact Riemannian manifold with a respective normalized Lebesgue measure $m$ and let  $f:M\rightarrow M$ be  a differentiable function of class $C^{r}$ with $r\geq 1$.


\begin{defi}
Given an $f$-invariant measure $\mu$, we define the \tb{basin of $\mu$} as the set $B(\mu)$ of points $x$ such that for every continuous function $\phi: M\rightarrow \R$ one has:
\begin{align*}
\lim_{n\rightarrow\infty}{\frac{1}{n}\sum_{i=0}^{n-1}{\phi(f^{i}(x))}} = \int_{M}{\phi \d\mu}
\end{align*}

We say that an $f$-invariant measure $\mu$ is a \textbf{physical measure} if the Lebesgue measure of $B(\mu)$ is positive.
\end{defi}


\begin{defi}
Given an invariant set $\Lambda$ for $f$, we say that $\Lambda$ is an \textbf{attractor} if there exists an open set $U\subset M$ such that $\overline{f(U)}\subset U$ and $\Lambda=\underset{n\geq 0}{\bigcap} {f^{n}(U)}$. The set $B(\Lambda)=\underset{j\geq 0}{\bigcup}{f^{-j}(U)}$ is called \tb{basin of attraction} of $\Lambda$, this is the set of points whose orbit accumulates on $\Lambda$.
\end{defi}

\begin{defi}
We say that an attractor $\Lambda$ is \textbf{partially hyperbolic} for $f$ if for every $x\in\Lambda$ there exist constants $\lambda_{ss}^{+}<\lambda_{c}^{-}<\lambda_{c}^{+}<\lambda_{uu}^{-}$ , $C>1$ and an $Df$-invariant splitting $T_{x}M=E^{ss}_{x}\oplus E^{c}_{x}\oplus E^{uu}_{x}$ such that $\lambda_{ss}^{+}<1$, $\lambda_{uu}^{-}>1$ and:
\begin{eqnarray*}
  & ||Df^nv|| & < {C}(\lambda_{ss}^{+})^n ||v|| \quad \quad \quad \quad v \in E^{ss}_x - \{0\} \\
 C^{-1}(\lambda_{c}^{-})^n ||v|| < & ||Df^{n} v|| & < C (\lambda_{c}^{+})^n ||v|| \quad \quad \quad \quad v \in E^c_x - \{0\} \\
 C^{-1}(\lambda_{uu}^{-})^n ||v|| < &||Df^n v|| & \quad \quad \quad \quad \quad \quad \quad \quad \quad \quad v \in E^{uu}_x - \{0\}
\end{eqnarray*}
\end{defi}

The subbundle $E^{ss}$ is called stable direction and $E^{uu}$ is called unstable direction. The distributions $E^{ss}$ and $E^{uu}$ integrates uniquely into invariant manifolds.

\begin{teo}\label{2.1}
The stable and unstable subbundles $E_{x}^{ss}$ and $E_{y}^{uu}$ integrate uniquely into laminations $\mc F^{ss}_x$ and $\mc F^{uu}_y$ for every $x\in B(\Lambda)$ and $y\in\Lambda$.
\end{teo}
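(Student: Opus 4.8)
The plan is to deduce this from the Hadamard--Perron invariant manifold theorem in the dominated setting, combined with a trapping argument that pushes the strong stable lamination from the compact attractor $\Lambda$ out to the whole basin $B(\Lambda)$.

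First I would extend the partially hyperbolic estimates from $\Lambda$ to a neighborhood. By continuity of $Df$ and the standard cone criterion for domination, the inequalities defining partial hyperbolicity on the compact set $\Lambda$ persist, with marginally worse constants, on some open neighborhood $V\supset\Lambda$: there are continuous cone fields about $E^{ss}$ and about $E^{cu}:=E^{c}\oplus E^{uu}$ that are, respectively, backward and forward invariant over $V$, with uniform contraction/expansion of the relevant rates. Since $\Lambda=\bigcap_{n\ge 0}f^{n}(U)$ with $\overline{f(U)}\subset U$, the compacta $K_n:=\overline{f^{n}(U)}$ are nested and decrease to $\Lambda$, so $K_N\subset V$ for some $N$ by compactness; after replacing $U$ by $f^{N}(U)$ (which does not change $B(\Lambda)=\bigcup_{j\ge 0}f^{-j}(U)$) we may assume $\overline{f(U)}\subset U\subset V$. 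Then every $x\in B(\Lambda)$ has a forward orbit that is eventually contained in $U\subset V$.

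Next, for $y\in\Lambda$ I would build $\mc F^{uu}_y$ by the graph transform: in exponential charts along the orbit of $y$, the domination $\lambda_{c}^{+}<\lambda_{uu}^{-}$ makes the graph transform a uniform contraction on the space of Lipschitz sections of a cone bundle over $E^{uu}$, whose unique fixed point is a $C^{r}$ disc $W^{uu}_{loc}(y)$ tangent to $E^{uu}_{y}$; the usual bootstrap in the section spaces yields $C^{r-1}$ dependence on the base orbit, hence the lamination structure. Uniqueness of the integral manifold of $E^{uu}$ follows from the dynamical characterization $W^{uu}(y)=\{z:\ d(f^{-n}y,f^{-n}z)\to 0 \text{ exponentially}\}$, since any arc tangent to $E^{uu}$ has exponentially contracting backward iterates and therefore lies in this set; the global leaf is $\bigcup_{n\ge 0}f^{n}\big(W^{uu}_{loc}(f^{-n}y)\big)$. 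For $x\in\Lambda$ the symmetric argument, using $\lambda_{ss}^{+}<\lambda_{c}^{-}$ and the forward dynamics, produces $\mc F^{ss}_x$.

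Finally, for an arbitrary $x\in B(\Lambda)$ I would obtain $\mc F^{ss}_x$ by transporting the construction through the trapping region: choose $n_{0}$ with $f^{n}(x)\in U\subset V$ for all $n\ge n_{0}$, run the strong-stable graph transform along the forward orbit of $f^{n_{0}}(x)$ — which stays in $V$, where the stable cone field is forward invariant and uniformly contracted — to produce a $C^{r}$ disc $W^{ss}_{loc}(f^{n_{0}}(x))$, then pull it back by the diffeomorphism $f^{-n_{0}}$, which also defines $E^{ss}_x:=Df^{-n_{0}}E^{ss}_{f^{n_{0}}(x)}$ (independent of $n_{0}$). Invariance and the characterization $W^{ss}(x)=\{z:\ d(f^{n}x,f^{n}z)\to 0 \text{ exponentially}\}$ give uniqueness and the global leaf $\bigcup_{n\ge 0}f^{-n}\big(W^{ss}_{loc}(f^{n}x)\big)$. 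The main obstacle is precisely this last step: since partial hyperbolicity is hypothesized only on $\Lambda$, one must verify that the cone estimates over the neighborhood $V$ genuinely suffice to run the graph transform along orbits that are only eventually in $V$, and that the resulting discs — together with their tangent spaces — depend continuously on $x$ with size bounded below on compact subsets of $B(\Lambda)$; everything else is the classical Hadamard--Perron machinery.
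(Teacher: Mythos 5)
Your argument is essentially the paper's: the paper does not prove Theorem 2.1 itself but refers to Hirsch--Pugh--Shub and Araujo--Melbourne, whose proofs are exactly the Hadamard--Perron/graph-transform construction you outline -- extending the cone estimates to a neighborhood of $\Lambda$, building $W^{uu}$ along backward orbits in $\Lambda$ and $W^{ss}$ along forward orbits eventually trapped near $\Lambda$, and pulling the stable leaves back by $f^{-n_0}$ to cover all of $B(\Lambda)$, with uniqueness from the exponential-convergence characterization. The one caveat is your claim of $C^{r-1}$ dependence on the base orbit: transversally these laminations are in general only continuous (H\"older), but continuous dependence of the $C^r$ leaves is all that the lamination structure requires, so this overstatement does not affect the argument.
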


A proof of Theorem \ref{2.1} can be found in \cite{araujo.melbourne} and \cite{HPS}. The existence of $\Fcal^{ss}$ can be guaranteed for every $x$ in a full neighbourhoud of the attractor (see \cite[Theorem 4.2]{araujo.melbourne}), but the unstable manifolds are only well defined for points in the attractor.

\begin{remark}
If $\Lambda$ is a partially hyperbolic attractor then $W^{uu}_{x}\subset\Lambda$ for every $x\in\Lambda$, hence $\Lambda=\underset{x\in\Lambda}{\cup}{W^{uu}(x)}$.
\end{remark}

\begin{teo}\label{bunching}
If the map $f$ is of class $C^{r}$ and satisfies the following bunching condition for a dominated splitting $E_{1}\oplus E_{2}$ and $k\geq 1$: 
\begin{equation}\label{bunching.condition}
\sup_{x\in\Lambda} ||D_{x}f_{|_{E_{1}}}|| \cdot \frac{||D_{x}f_{|_{E_{2}}}||^{k}}{m(D_{x}f_{|_{E_{2}}})}<1.
\end{equation}

Then, there exists an invariant foliation $\Fcal_{1}$ tangent to $E_{1}$ of class $C^{l}$, where $l=\min\{k,r-1\}$. Moreover, the foliation $\Fcal_{1}(g)$ tends to $\Fcal_{1}(f)$ in the $C^{l}$-topology  when $g$ tends to $f$ in the $C^r$-topology.
\end{teo}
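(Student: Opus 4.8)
The plan is to obtain the existence and the $C^l$ regularity of $\Fcal_1$ from the $C^r$ section theorem of Hirsch--Pugh--Shub \cite{HPS}, applied to a graph transform on a bundle of $l$-jets over $\Lambda$, the condition \eqref{bunching.condition} being exactly the hypothesis that this theorem needs in order to produce $C^l$ regularity. First I would record existence and leafwise smoothness. Since $k\geq 1$, multiplying \eqref{bunching.condition} along an orbit segment and combining with the domination of $E_1\oplus E_2$ shows that $E_1$ is uniformly contracted; hence, by the stable manifold theorem for dominated splittings (Theorem~\ref{2.1}; see also \cite{HPS}), $E_1$ integrates to a unique $f$-invariant lamination $\Fcal_1$ whose leaves are $C^r$ immersed submanifolds of uniformly bounded geometry. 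What remains is to upgrade $\Fcal_1$ from a lamination with $C^r$ leaves to a genuinely $C^l$ foliation.

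For the regularity, fix a finite atlas of charts near $\Lambda$ adapted to the splitting, so that in the chart at $x$ the map $f$ reads $(\xi,\eta)\mapsto(A_x\xi+\varphi_x(\xi,\eta),\ B_x\eta+\psi_x(\xi,\eta))$, with $A_x$ conjugate to $Df|_{E_1(x)}$, $B_x$ to $Df|_{E_2(x)}$, and $\varphi_x,\psi_x$ vanishing to second order at the origin, uniformly in $x$ (one shrinks the charts to make the nonlinear remainders small). Represent $\Fcal_1$ near $x$ by the map $\sigma_x(\xi,\eta_0)$ giving the $E_2$-coordinate, over $\xi\in E_1(x)$, of the leaf through the transversal point $(0,\eta_0)$; invariance $f(\Fcal_1)=\Fcal_1$ then becomes a graph-transform identity expressing $\sigma_{f(x)}$ in terms of $\sigma_x$, $A_x$, $B_x$, $\varphi_x$, $\psi_x$. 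The induced map $T$ on the bundle over $\Lambda$ whose fiber at $x$ is the space of such germs, in the $C^j$ topology, is a fiber contraction, and differentiating the identity $j$ times in the transversal directions and expanding with the Fa\`a di Bruno formula shows that the dominant contribution to the $j$-jet part of $T$ contracts, after passing to a high enough iterate, at a rate bounded by
\begin{equation*}
\sup_{x\in\Lambda}\ \|Df|_{E_1}\|\cdot\frac{\|Df|_{E_2}\|^{\,j}}{m(Df|_{E_2})},
\end{equation*}
the remaining terms being of lower order in the jet and absorbed by the induction on $j$ together with the uniform smallness of $\varphi_x,\psi_x$. For $j\leq k$ this number is $<1$ by \eqref{bunching.condition}; since $f$ is $C^r$ the bundle map $T$ is $C^{r-1}$; hence for every $j\leq\min\{k,r-1\}=l$ the $C^l$ section theorem of \cite{HPS} produces a unique invariant section of class $C^l$, which is precisely the germ field of $\Fcal_1$. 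Together with the $C^r$ smoothness of the individual leaves, Journ\'e's regularity lemma then yields $C^l$ foliation charts, so $\Fcal_1$ is $C^l$.

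For the last assertion, note that $A_x,B_x,\varphi_x,\psi_x$ --- hence the bundle map $T=T_f$ in the relevant topology --- depend continuously on $f$ in the $C^r$ topology, and that the fiber-contraction and $C^l$ conditions used above are open; so they persist for every $g$ in a $C^r$-neighborhood of $f$, and since the section theorem also provides continuous dependence of the invariant section on the bundle map, unwinding this dependence gives $\Fcal_1(g)\to\Fcal_1(f)$ in the $C^l$ topology as $g\to f$ in the $C^r$ topology.

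I expect the main obstacle to be the estimate in the second step: organizing the Fa\`a di Bruno expansion so that the dominant term of the $j$-th transversal derivative of the graph transform is exactly $\|Df|_{E_1}\|\,\|Df|_{E_2}\|^{\,j}/m(Df|_{E_2})$ while every other term is genuinely of lower order and is swallowed by the induction, and verifying that the second-order smallness of $\varphi_x,\psi_x$ obtained by shrinking the charts is uniform over the compact set $\Lambda$ and over the chosen iterate. The remaining ingredients --- the stable manifold theorem, the $C^r$ section theorem, and Journ\'e's lemma --- enter only through their standard forms.
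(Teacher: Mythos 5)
Your proposal is correct and follows essentially the same route the paper relies on: the paper does not prove Theorem~\ref{bunching} itself but refers to \cite{araujo.melbourne} and \cite{HPS}, whose argument is precisely the graph-transform/fiber-contraction scheme via the $C^r$-Section Theorem that you sketch, with the $C^l$-continuity in $f$ likewise obtained from that theorem. The points you flag (the jet estimates under the bunching rate and the passage to $C^l$ foliation charts) are the standard content of those references, so nothing is missing relative to what the paper invokes.
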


For a proof of Theorem \ref{bunching}, see  \cite{araujo.melbourne} and \cite{HPS}. The $C^l$-continuity of the foliation  
 follows from the use of the $C^r$-Section Theorem \cite[Theorem 3.5]{HPS} in the proof of \cite[Theorem 4.12]{araujo.melbourne}.

Invariant sets satisfying (1) for $k=1$ are usually called normally dissipative (see \cite{Pujals}). In this case the condition says that the rate of contraction along $E_1$ is smaller than the ratio between the minimum and maximum singular value of the restriction $Df$  to $E_2$. If we take $k=1$ and $r=2$, from Theorem 2.2, we see that $\Fcal^{ss}$ is of class $C^1$ and the stable foliation of any $g$ $C^{2}$-close to $f$ is close to the stable foliation of $f$ in the $C^{1}$-topology, which means that $E^{ss}_x(g)$ is a $C^{1}$-subbundle and tends to $E^{ss}_x(f)$ when $g$ tends to $f$.

\begin{remark}
When $\Lambda$ is a transitive hyperbolic attractor (i.e., $E^{c}=0$), the works of Sinai-Ruelle-Bowen guarantee that there exists a unique physical measure $\mu$ whose basin $B(\mu)$ has full Lebesgue measure in the basin of attraction of $\Lambda$.
\end{remark}


Throughout the paper, the most important measures will be the so called u-Gibbs states, which are measures whose disintegration along unstable leaves corresponds to absolutely continuous measures with respect to the induced Lebesgue measure in each unstable leaf.

\begin{defi}
Given $x\in\Lambda$ , $r>0$ and a $C^1$ disk $\Sigma$ centered at $x$ with dimension $\dim (E^{ss}\oplus E^{c})$ and transversal to $\Fcal^{uu}$, we define the \tb{foliated box} $\Pi(x,\Sigma,r):=\underset{z\in\Lambda\cap\Sigma}{\bigcup}{\gamma^{uu}_{(z,r)}}$, where $\gamma^{uu}_{(z,r)}$ is the unstable neighborhood of $z$ of radius $r$.

We call a \tb{foliated chart} an application $\Phi_{x,\Sigma,r}:\Pi(x,\Sigma,r)\rightarrow I_{r}^{uu} \times (\Sigma\cap\Lambda)$ that is a homeomorphism into the image and restricted to each $\gamma^{uu}$ is a diffeomorphism into the horizontal.
\end{defi}

\begin{defi}
Let $\mu$ be an invariant finite Borel measure $\mu$, we say that $\mu$ is \ti{absolutely continuous with respect to the Lebesgue measure on unstable leaves} or a \tb{u-Gibbs state} if for every $x$, $\Sigma$ and $r>0$, the disintegration $\{ \mu_{z} \}_{z\in\Sigma\cap\Lambda}$ of the measure $(\Phi_{x,\Sigma,r})_{*}\mu$ with respect to the partition of $I^{uu} \times (\Sigma\cap\Lambda)$ into horizontal lines is formed by absolutely continuous measures with respect to the induced Lebesgue measures $m_{\gamma^{uu}_{(z,r)}}$ for $\hat{\mu}$-a.e $z$.
\end{defi}

It is well-known that u-Gibbs states always exist. Actually, if $D^{uu}$ is any compact disk contained in some unstable leaf and $m_{D^{uu}}$ is the normalized Lebesgue measure induced on $D^{uu}$ by the volume element associated to some Riemannian metric of $M$. We have:

\begin{prop}
Let $f$ be a $C^{2}$ diffeomorphism, $\Lambda$ a partially hyperbolic attractor and $D^{uu}$  an unstable disk. If we consider the measures $ \mu_{n}=\frac{1}{n}\sum_{i=0}^{n-1}{f^{j}_{*}(m_{D^{uu}})}$, then any accumulation point of $\mu_{n}$ is a u-Gibbs State.
\end{prop}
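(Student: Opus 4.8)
The statement is classical: it goes back to Sinai in the hyperbolic case and, in the partially hyperbolic category, to Pesin--Sinai and Bonatti--Viana. The plan is to control the densities of the iterates $f^{i}_{*}(m_{D^{uu}})$ along unstable plaques by a uniform bounded-distortion estimate, and then to check that this control is preserved both by the Ces\`aro averaging and by passing to a weak-$*$ limit. Several preliminary facts are soft and I would dispatch them first. Since $E^{uu}$ integrates to an invariant lamination and $D^{uu}$ lies in a single unstable leaf, each $f^{i}(D^{uu})$ again lies in a single unstable leaf, so $f^{i}_{*}(m_{D^{uu}})$ is absolutely continuous with respect to the induced Riemannian volume on that leaf, with density at $f^{i}(w)$ proportional to $\bigl(\Jac(Df^{i}|_{T_{w}W^{uu}})\bigr)^{-1}$. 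Because $D^{uu}\subset W^{uu}\subset\Lambda$, every $\mu_{n}$, hence every accumulation point $\mu$, is supported on $\Lambda$; and since $f_{*}\mu_{n}-\mu_{n}=\tfrac1n\bigl(f^{n}_{*}m_{D^{uu}}-m_{D^{uu}}\bigr)\to0$ while $M$ is compact, $\mu$ is an $f$-invariant probability. It then remains to verify the defining property of a u-Gibbs state in an arbitrary foliated box.

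The technical heart is the bounded-distortion estimate: there is $C_{0}>1$, depending only on $f$, such that whenever $y_{1},y_{2}\in f^{n}(D^{uu})$ are joined inside that leaf by an arc of length at most a fixed $\delta_{0}$, then, writing $w_{j}=f^{-n}(y_{j})\in D^{uu}$,
\[
C_{0}^{-1}\;\le\;\frac{\Jac\!\bigl(Df^{n}|_{T_{w_{1}}W^{uu}}\bigr)}{\Jac\!\bigl(Df^{n}|_{T_{w_{2}}W^{uu}}\bigr)}\;\le\;C_{0}.
\]
One proves this by expanding the logarithm of the ratio as the telescoping sum $\sum_{i=0}^{n-1}\bigl[\log\Jac(Df|_{E^{uu}})(f^{i}w_{1})-\log\Jac(Df|_{E^{uu}})(f^{i}w_{2})\bigr]$, using that $w\mapsto\log\Jac(Df|_{E^{uu}_{w}})$ is H\"older (here $f\in C^{2}$ and $E^{uu}$ is H\"older continuous), and that $d(f^{i}w_{1},f^{i}w_{2})$ decays geometrically in $n-i$ because $f^{n-i}$ expands unstable arcs by at least $(\lambda_{uu}^{-})^{n-i}$; the resulting geometric series is bounded independently of $n$. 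The consequence I need: inside any foliated box $\Pi=\Pi(x,\Sigma,r)$, the restriction of $f^{i}_{*}(m_{D^{uu}})$ disintegrates along the unstable plaques into measures absolutely continuous with respect to leaf-volume whose densities have multiplicative oscillation at most $C_{0}$ on each plaque-piece met by $f^{i}(D^{uu})$.

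Now fix $\Pi$ with foliated chart $\Phi$, push the measures into $Q=I^{uu}_{r}\times(\Sigma\cap\Lambda)$, let $m_{I}$ be the normalized Lebesgue measure on $I^{uu}_{r}$ and $p:Q\to\Sigma\cap\Lambda$ the projection, and take a subsequence with $\nu_{n}:=\Phi_{*}(\mu_{n}|_{\Pi})\to\nu:=\Phi_{*}(\mu|_{\Pi})$. Since $\dim E^{uu}=1$, $f^{i}(D^{uu})\subset\Lambda$ is a single arc with two endpoints; as $\Pi$ is a union of plaques, a maximal sub-arc of $f^{i}(D^{uu})$ contained in $\Pi$ is, unless it contains one of the (at most two) endpoints of $f^{i}(D^{uu})$, an \emph{entire} plaque $\gamma^{uu}_{(z,r)}$. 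On such a full plaque the conditional probability induced by $f^{i}_{*}(m_{D^{uu}})$ has density bounded by a constant $C=C(f,r)$ with respect to $m_{I}$ (by the distortion estimate, and the bound survives summation over $i$ because a sum of densities each of oscillation $\le C_{0}$ still has oscillation $\le C_{0}$). A sub-arc coming from time $i$ carries mass $O((\lambda_{uu}^{-})^{-i})$, so the contribution to $\mu_{n}$ of the at most $2n$ ``edge'' sub-arcs has total mass $O(1/n)\to0$; discarding them, the remaining measure $\nu_{n}^{\mathrm{cr}}$ still converges to $\nu$, has the same marginal $p_{*}\nu_{n}^{\mathrm{cr}}\to p_{*}\nu$, and satisfies $\int\phi\,d\nu_{n}^{\mathrm{cr}}\le C\int\bigl(\int\phi(\cdot,z)\,dm_{I}\bigr)\,d(p_{*}\nu_{n}^{\mathrm{cr}})(z)$ for every continuous $\phi\ge0$, with $C$ independent of $n$. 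Since $z\mapsto\int\phi(\cdot,z)\,dm_{I}$ is continuous and $p_{*}\nu_{n}^{\mathrm{cr}}\to p_{*}\nu$, the inequality passes to $\nu$; testing it against products $\phi(t,z)=\alpha(t)\beta(z)$ over a countable dense family of nonnegative $\alpha\in C(I^{uu}_{r})$ then forces the disintegration of $\nu$ along the horizontals $I^{uu}_{r}\times\{z\}$ to consist of measures $\le C\,m_{I}$, in particular absolutely continuous. As the box was arbitrary, $\mu$ is a u-Gibbs state.

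The bounded-distortion estimate is routine; the real obstacle is the last step, because weak-$*$ convergence destroys absolute continuity in general and a priori the normalized conditional measures could concentrate. What rescues it is the combination of the uniform distortion bound with the elementary but essential book-keeping fact that a sub-arc of $f^{i}(D^{uu})$ that crosses a plaque must be the whole plaque — so that the conditional measures are automatically supported on a set of definite size and hence have uniformly bounded density — together with the observation that the leftover ``half-plaque'' pieces are negligible after averaging.
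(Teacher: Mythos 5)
Your argument is the classical Pesin--Sinai/Bonatti--Diaz--Viana one; the paper itself offers no proof of this proposition (it is stated as well known, with the reader sent to \cite{BDV}, Section 11), so the only comparison available is with that standard argument, and your outline follows it faithfully: absolute continuity of $f^{i}_{*}(m_{D^{uu}})$ along its leaf with density given by the inverse unstable Jacobian, uniform bounded distortion via the telescoping H\"older estimate and backward contraction, the observation that a piece of $f^{i}(D^{uu})$ meeting a plaque of a foliated box either covers the whole plaque or contains one of the two endpoints of $f^{i}(D^{uu})$, negligibility of the $O(n)$ edge pieces after Ces\`aro averaging, and the domination inequality $\int\phi\,d\nu_{n}\leq C\int\bigl(\int\phi(\cdot,z)\,dm_{I}\bigr)\,d(p_{*}\nu_{n})(z)$, which correctly sidesteps the ``sum of conditionals'' trap. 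All of these steps are sound.

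The one step that does not work as written is the opening move in the box: ``take a subsequence with $\nu_{n}:=\Phi_{*}(\mu_{n}|_{\Pi})\to\nu:=\Phi_{*}(\mu|_{\Pi})$''. Restriction to $\Pi$ is not weak-$*$ continuous, and here $\Pi=\bigcup_{z\in\Sigma\cap\Lambda}\gamma^{uu}_{(z,r)}$ is typically a set with empty interior in $M$, so you can neither invoke portmanteau directly nor dodge the issue by choosing $r$ with $\mu(\partial\Pi)=0$: mass of $\mu_{n_k}$ lying off $\Pi$ can converge onto $\Pi$, and in general no subsequence of the restrictions converges to $\Phi_{*}(\mu|_{\Pi})$. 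Moreover the inequality that weak-$*$ convergence does give you for free, $\limsup_k\mu_{n_k}(A)\leq\mu(A)$ for closed $A$, goes the wrong way: you need an \emph{upper} bound on $\mu$ of small product sets in terms of the $\mu_{n_k}$. This is exactly where the real work sits in the standard proofs. The usual repairs are: (i) note that all the measures are supported on the compact set $\Lambda$, and that a box built from open plaques over a relatively open transversal set is relatively open in $\Lambda$ (continuity of the unstable lamination), so that for relatively open product sets $\Phi^{-1}(J\times E)$ one gets $\mu(\Phi^{-1}(J\times E))\leq\liminf_k\mu_{n_k}(\Phi^{-1}(J\times E))\leq C\,m_{I}(J)\,\mu\bigl(\Phi^{-1}(\overline{I}\times\overline{E})\bigr)$, the middle inequality being your distortion/crossing estimate applied uniformly in $k$; or (ii) estimate $\mu_{n_k}$ on small open thickenings of product sets in $M$ (the crossing and distortion bounds survive thickening, again by continuity of the lamination) and let the thickening go to zero. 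From such a bound a routine regularity-plus-differentiation argument yields that the conditionals of $\mu$ on the plaques are dominated by a constant multiple of $m_{I}$. With that step inserted, your proof is complete and is the same argument as in the reference the paper cites.
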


For more on u-Gibbs States one can check the book \cite{BDV}. We will refer to these measures by ``u-Gibbs''. The main properties on u-Gibbs are given in the following result:

\begin{prop}
Let $f: M \rightarrow M$ be a diffeomorphism of class $C^{r}$, $r\geq 2$, and $\Lambda$ a partially hyperbolic attractor for $f$, then

(1) The densities of a u-Gibbs with respect to the induced Lebesgue measure along unstable plaques are positive and bounded from zero and infinity.

(2) The support of every u-Gibbs is $W^{uu}$-saturated, in particular, it is contained in the attractor $\Lambda$.

(3) The set of u-Gibbs is non-empty, weak-$\ast$ compact and convex.

(4) The ergodic components of a u-Gibbs are also u-Gibbs.

(5) Every physical measure supported in $\Lambda$ is a u-Gibbs.
\end{prop}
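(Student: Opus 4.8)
The plan is to derive all five items from the classical theory of Gibbs $u$-states, the only substantial ingredient being a bounded distortion estimate along unstable leaves; \cite{BDV} contains complete proofs, and I sketch the arguments.

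\emph{Item (1).} Fix a foliated box, write $\gamma_z:=\gamma^{uu}_{(z,r)}$ for the unstable plaques and let $\{\mu_z\}_z$ be the disintegration of a $u$-Gibbs $\mu$ along them, with densities $\rho_z=d\mu_z/dm_{\gamma_z}$. Pushing $\mu$ forward by $f$ and comparing disintegrations (the change of variables along unstable plaques involves $\Jac(Df|_{E^{uu}})$), $f$-invariance forces, for $\hat\mu$-a.e.\ $z$ and $m_{\gamma_z}$-a.e.\ $y,w\in\gamma_z$,
\[
\frac{\rho_z(y)}{\rho_z(w)}=\prod_{j=1}^{\infty}\frac{\Jac\big(Df|_{E^{uu}}\big)\big(f^{-j}(w)\big)}{\Jac\big(Df|_{E^{uu}}\big)\big(f^{-j}(y)\big)} .
\]
The \emph{bounded distortion lemma} then applies: since $E^{uu}$ is H\"older, $\Jac(Df|_{E^{uu}})$ is H\"older along unstable leaves, and $f^{-j}$ contracts each plaque exponentially, so the product converges and lies in an interval $[K^{-1},K]$ with $K=K(f,\Lambda,r)>1$. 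Hence $\rho_z$ agrees $m_{\gamma_z}$-a.e.\ with a positive continuous function of bounded oscillation; as $\int\rho_z\,dm_{\gamma_z}=1$ and the plaques have comparable induced volume, $\rho_z$ is bounded away from $0$ and $\infty$ uniformly in $\mu$ and $z$.

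\emph{Items (2) and (3).} By item (1) each $\mu_z$ is equivalent to $m_{\gamma_z}$; thus $\operatorname{supp}\mu$ contains, with every point, the whole local unstable plaque through it, and by $f$-invariance together with $W^{uu}(x)=\bigcup_n f^n\big(W^{uu}_{\mathrm{loc}}(f^{-n}x)\big)$ it is a union of entire strong unstable leaves; since $W^{uu}(x)\subset\Lambda$ for every $x\in\Lambda$, this gives $\operatorname{supp}\mu\subset\Lambda$. For item (3): non-emptiness is the preceding Proposition together with weak-$\ast$ compactness of the space of probability measures; convexity holds because in each box the disintegration of $t\mu^{1}+(1-t)\mu^{2}$ has conditionals that are convex combinations of the (absolutely continuous) conditionals of $\mu^{1},\mu^{2}$; for compactness, take $u$-Gibbs states $\mu^{k}\to\mu$, observe that on a fixed box $\mu^{k}$ is absolutely continuous with respect to the measure obtained by integrating $m_{\gamma_z}$ against the transversal marginal of $\mu^{k}$, with density bounded by the uniform constant $K$ of item (1), pass to a weak-$\ast$ limit of the transversal marginals, and conclude by uniqueness of the Rokhlin disintegration that $\mu$ has absolutely continuous conditionals, i.e.\ $\mu$ is $u$-Gibbs.

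\emph{Items (4) and (5).} For item (4), if $x,y$ lie on the same strong unstable leaf then $d(f^{-n}x,f^{-n}y)\to0$, so $x$ and $y$ have the same backward Birkhoff averages and hence belong to the same ergodic component (the ergodic decompositions of $f$ and $f^{-1}$ coincide); therefore each ergodic component $\mu_P$ in $\mu=\int\mu_P\,d\hat\mu(P)$ is, mod $0$, a union of strong unstable leaves, and disintegrating $\mu$ and the $\mu_P$ along unstable plaques in a box shows the conditionals of $\mu_P$ are proportional to restrictions of the (absolutely continuous) conditionals of $\mu$, so $\mu_P$ is $u$-Gibbs for $\hat\mu$-a.e.\ $P$. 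For item (5), let $\mu$ be a physical measure; since the strong unstable foliation is absolutely continuous, Lebesgue measure disintegrates in a foliated box with absolutely continuous conditionals along unstable plaques, so $m(B(\mu))>0$ yields an unstable disk $D$ with $m_D(D\cap B(\mu))>0$. Every point of $D\cap B(\mu)$ lies in $B(\mu)$, so dominated convergence gives $\tfrac1n\sum_{j=0}^{n-1}f^j_*\big(m_D|_{D\cap B(\mu)}/m_D(D\cap B(\mu))\big)\to\mu$, while by the argument of the preceding Proposition (bounded distortion along forward iterates of $D$) every accumulation point of these averages is a $u$-Gibbs state; hence $\mu$ is $u$-Gibbs.

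\emph{Main obstacle.} The one genuinely technical point is the bounded distortion lemma behind item (1) — the convergence and uniform two-sided bound for the distortion products along backward unstable orbits — which relies on the H\"older regularity of $E^{uu}$ and the uniform backward contraction; granting it, items (2)--(5) follow by soft arguments (disintegration, $f$-invariance, absolute continuity of the invariant foliations, and the ergodic decomposition). All of the above is classical; see \cite{BDV}.
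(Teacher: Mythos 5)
The paper itself offers no argument for this proposition beyond the citation ``See \cite{BDV}, Section 11,'' so your overall route --- invoking the classical theory of Gibbs $u$-states, with bounded distortion along unstable leaves as the one technical ingredient --- is exactly the same, and your sketches of items (1)--(4) are the standard arguments from that reference (the infinite-product formula for the conditional densities, saturation of the support, convexity/compactness via the uniform density bounds, and ergodic components via coincidence of backward Birkhoff averages along unstable leaves).

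There is, however, a genuine flaw in your sketch of item (5). You claim that, by absolute continuity of the strong unstable foliation, Lebesgue measure in a foliated box disintegrates with absolutely continuous conditionals along unstable plaques, so that $m(B(\mu))>0$ produces an unstable disk $D$ with $m_D(D\cap B(\mu))>0$. For a partially hyperbolic \emph{attractor}, the strong unstable lamination $\{W^{uu}_x\}$ is defined only for $x\in\Lambda$ (see Theorem 2.1 of the paper), and $\Lambda$ typically has zero Lebesgue measure, while the basin $B(\mu)$ lies almost entirely off $\Lambda$; so there is no foliation of a positive-volume set by unstable plaques, and the disintegration step fails --- one cannot in general find an unstable disk of the attractor meeting $B(\mu)$ in positive leaf measure. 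The correct (and standard) route, which the paper sets up precisely for this purpose in its Proposition 2.3, is to take $E=B(\mu)\subset B(\Lambda)$, note that $\frac1n\sum_{j=0}^{n-1}f^j_*(m_E)\to\mu$ by dominated convergence (every point of $E$ is in the basin of $\mu$), and invoke that every accumulation point of these averages is a $u$-Gibbs; that proposition is proved by pushing $m_E$ forward so that its iterates accumulate on $\Lambda$ and gain regularity in the unstable direction, which is exactly the content your shortcut tries to bypass. With that substitution item (5) is fine; the rest of your proposal stands as written.
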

\begin{proof}
See \cite{BDV}, Section 11.
\end{proof}

Property (5) above says that the u-Gibbs are the appropriate candidates for the physical measures. In this work, we prove that under certain conditions the ergodic u-Gibbs are in fact the physical measures.

\begin{prop}
Let $f$ be a  $C^{2}$  diffeomorphism, $\Lambda$  a partially hyperbolic attractor and $E\subset B(\Lambda)$ a measurable set  with positive Lebesgue measure. If we consider $m_{E}$ the restricted and normalized measure of $m$ to $E$, then any point of accumulation of the measures $\mu_{n}=\frac{1}{n}\sum_{j=0}^{n-1}{f^{j}_{*}(m_{E})}$ is a u-Gibbs.
\end{prop}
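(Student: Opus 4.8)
The plan is to reduce the statement to the corresponding one for an absolutely continuous probability measure carried by a disk tangent to the unstable cone, invoke the distortion mechanism behind the earlier Proposition (that Cesàro averages of $f^{j}_{*}m_{D^{uu}}$ accumulate on u-Gibbs states), and then recombine. First I would use that $E\subset B(\Lambda)=\bigcup_{j\ge 0}f^{-j}(U)$ together with $m(E)>0$ to fix $N$ with $m(E\cap f^{-N}(U))>0$; discarding a Lebesgue-null set we may assume $E\subset f^{-N}(U)$, and replacing $m_{E}$ by the probability $\sigma:=f^{N}_{*}(m_{E})$ changes neither the family of accumulation points of $\frac1n\sum_{j=0}^{n-1}f^{j}_{*}(\,\cdot\,)$ (the two Cesàro averages differ only by a fixed shift of $N$ terms, of total mass $O(1/n)$) nor the property of being a u-Gibbs state. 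Now $\sigma\ll m$ and $\operatorname{supp}(f^{j}_{*}\sigma)\subset f^{j}(\overline U)$; since $\overline{f(U)}\subset U$ and $\bigcap_{n}f^{n}(U)=\Lambda$, the compact sets $f^{j}(\overline U)$ shrink to $\Lambda$, so every weak-$*$ accumulation point $\mu$ of $\mu_{n}$ is supported in $\Lambda$.

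Next I would fix a $Df$-forward-invariant cone field $\mathcal C^{uu}$ around $E^{uu}$ on a neighbourhood of $\Lambda$ (so that vectors in $\mathcal C^{uu}$ are uniformly expanded), cover $\overline U$ by finitely many $C^{r}$ charts, and in each chart choose a smooth reference foliation by $\dim E^{uu}$-dimensional disks whose tangent planes lie in $\mathcal C^{uu}$ — for instance affine plaques parallel to $E^{uu}$ at the centre of a sufficiently small chart, which stay in the cone by Hölder continuity of $E^{uu}$. Using a partition of unity subordinate to this cover and Fubini's theorem, write $\sigma=\int\nu_{\xi}\,d\lambda(\xi)$, where $\lambda$ is a probability on the set of such ``admissible disks'' $\xi$ and each $\nu_{\xi}$ is a probability, absolutely continuous with respect to the induced Lebesgue measure, on the disk $\xi$. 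Iterating, the forward images $f^{n}(\xi)$ remain tangent to $\mathcal C^{uu}$, have inner radius tending to infinity and bounded geometry, and converge in the $C^{1}$ topology (on compact parts) to genuine unstable plaques; since $f$ is $C^{2}$ and the Jacobian along disks tangent to $\mathcal C^{uu}$ is Hölder, the bounded-distortion estimate along the backward orbit holds, uniformly in $\xi$ and $n$. This is exactly the situation of the earlier Proposition, whose argument applies with $m_{D^{uu}}$ replaced by the a.c.\ measure $\nu_{\xi}$ on $\xi$ (a standard extension; see \cite{BDV}, Section~11), and yields: for $\lambda$-a.e.\ $\xi$, every accumulation point of $\frac1n\sum_{j=0}^{n-1}f^{j}_{*}\nu_{\xi}$ disintegrates along unstable plaques into absolutely continuous measures (in fact with density bounded away from $0$ and $\infty$), i.e.\ is a u-Gibbs state.

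Finally I would recombine. Let $\mathcal K$ be the set of u-Gibbs states, which is weak-$*$ closed and convex. Fixing a translation-invariant metric $D$ metrizing the weak-$*$ topology on probability measures (built from a countable dense family of test functions), the function $\nu\mapsto\operatorname{dist}_{D}(\nu,\mathcal K)$ is convex and bounded. By the previous paragraph $\operatorname{dist}_{D}\!\bigl(\tfrac1n\sum_{j}f^{j}_{*}\nu_{\xi},\mathcal K\bigr)\to 0$ for $\lambda$-a.e.\ $\xi$, so by dominated convergence its $\lambda$-integral tends to $0$; by Jensen's inequality $\operatorname{dist}_{D}(\mu_{n},\mathcal K)\le\int\operatorname{dist}_{D}\!\bigl(\tfrac1n\sum_{j}f^{j}_{*}\nu_{\xi},\mathcal K\bigr)\,d\lambda(\xi)\to 0$, and since $\mathcal K$ is closed every accumulation point of $\mu_{n}$ lies in $\mathcal K$.

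The only substantial point is the distortion step: one must know that the limit has absolutely continuous conditionals along the \emph{actual} unstable leaves of $\Lambda$, not merely along the non-invariant reference foliation (which degenerates under iteration), and this is precisely where $f\in C^{2}$ and the forward invariance of $\mathcal C^{uu}$ are used. This is classical — it is the same mechanism underlying the existence of u-Gibbs states and the earlier Proposition — so the genuinely new content here is just the Fubini reduction and the soft recombination argument.
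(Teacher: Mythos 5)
The paper does not actually prove this proposition---it is invoked as a standard fact, falling under its reference to \cite{BDV}, Section 11---and your argument is essentially that standard one: disintegrate the absolutely continuous measure along a smooth reference foliation by disks tangent to a forward-invariant unstable cone field, run the distortion argument behind Proposition 2.1 on each disk (extended to absolutely continuous conditional measures with bounded density, e.g.\ via the domination $\nu_\xi\le C\,m_\xi$, which is exactly how \cite{BDV} handles positive-measure sets), and recombine using weak-$*$ compactness together with convexity and closedness of the set $\mathcal{K}$ of u-Gibbs states. The Jensen/distance-to-$\mathcal{K}$ recombination is correct as written, and the localization of the support to $\Lambda$ is fine.

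One reduction step is misstated: from $m(E\cap f^{-N}(U))>0$ you cannot conclude, even after discarding a Lebesgue-null set, that $E\subset f^{-N}(U)$ for a single $N$. The sets $f^{-N}(U)$ increase to $B(\Lambda)$, so $m(E\setminus f^{-N}(U))\to 0$, but it need not vanish for any finite $N$. The slip is harmless because your own recombination machinery absorbs it: write $m_E=(1-\alpha_N)\,m_{E\cap f^{-N}(U)}+\alpha_N\,m_{E\setminus f^{-N}(U)}$ with $\alpha_N\to 0$; your argument shows that the Cesàro averages of the first summand have $\operatorname{dist}_D$ to $\mathcal{K}$ tending to $0$, hence $\limsup_n\operatorname{dist}_D(\mu_n,\mathcal{K})$ is at most a constant times $\alpha_N$, and letting $N\to\infty$ and using that $\mathcal{K}$ is weak-$*$ closed concludes. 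With this correction, and granting the quoted (and genuinely standard) extension of Proposition 2.1 to disks tangent to the unstable cone carrying absolutely continuous measures, the proof is complete.
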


\subsection{The Transversality Condition}

Let $M$ be a three-dimensional compact manifold  and $\Lambda$ a partially hyperbolic attractor for $f$ with $\dim E^{ss}=\dim E^{c}=\dim E^{uu}=1$.

\begin{defi} We say that $\Lambda$ is \tb{dynamically coherent} if for every $x\in\Lambda$ there exist uniquely invariant manifolds $W^{cu}_{x}$ and $W^{cs}_{x}$ tangent to $E^{c}_{x}\oplus E^{uu}_{x}$ and $E^{ss}_{x}\oplus E^{c}_{x}$.
\end{defi}

The invariant manifolds $W^{cu}_{x}$ and $W^{cs}_{x}$ will be called as center-unstable and center-stable manifolds, respectively. When the attractor is dynamically coherent we have the invariant center manifolds $W^{c}$ given by $W^{c}_{x}=W^{cu}_{x}\cap W^{cs}_{x}$. When $z \notin \Lambda$ is in $W^{cu}_x$ we will denote $W^{cu}(z)$ by $W^{cu}_x$.

\begin{remark}
It is not known whether dynamical coherence is an open property or not. However, if the distribution $E^{c}$ is of class $C^{1}$ then it is open (see Theorems 7.1 and 7.4 in \cite{HPS}).
\end{remark}

\begin{defi}
Given a dynamically coherent attractor $\Lambda$ and two unstable curves of finite length $\gamma_{1}$ and $\gamma_{2}$, let $W^{cu}_i$ be the center-unstable submanifold of radius $R_{0}$ around the curve $\gamma_{i}$, $i=1,2$. We define the \tb{stable distance between these curves} by:
$$
d^{ss}(\gamma_{1}, \gamma_{2})
=\begin{cases}

&\underset{\gamma^{ss}}{\min}   \Big\{ 
   \begin{array}{cr}
   l(\gamma^{ss}) \text{ } | & \gamma^{ss} \text{ is  either a stable segment    joining } \gamma_{1} \text{ to } W^{cu}_{2} \\ 
   \quad & \text{ or a stable segment joining } \gamma_{2} \text{ to }      W^{cu}_{1}
   \end{array} 
  \Big\}
  \\
&\infty \text{ , if does not exist } \gamma^{ss} \text{ as above}
      \end{cases}
$$
\end{defi}

\begin{prop}
There exists a finite covering $\{U_{i}\}_{i\in I}$ of $\Lambda$ by open sets in $M$ and homeomorphisms $\psi_{i}:U_{i}\subset M\rightarrow I_{i}\times D_{i}\subset\R^{3}$, where $I_{i}\subset\R$ and $D_{i}$ is a ball contained in $\R^{2}$, such that for every $z\in\Lambda\cap D_{i}$ exists $a(z)\in I_{i}$ with $\psi_{i}(W^{cu}(z))\subset \{a(z)\}\times D_{i}$ and $\psi_{i}|_{W^{cu}(z)}$ is a diffeomorphism into the image.
\end{prop}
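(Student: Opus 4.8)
The plan is to build, around each point $p\in\Lambda$, a chart in which the local center--unstable manifolds through nearby points of $\Lambda$ appear as horizontal $2$-disks, and then to extract a finite subcover of $\Lambda$ by compactness. The only input needed is dynamical coherence: it provides, for each $x\in\Lambda$, a $C^{1}$ center--unstable manifold $W^{cu}(x)$, and by the standard theory of invariant manifolds these local manifolds have uniform size, depend continuously on $x\in\Lambda$, and are pairwise disjoint in the local picture (two $cu$-manifolds sharing a point coincide near it, by uniqueness). Hypotheses (H1), (H2) and (H3') are not used here.

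First I would fix $p\in\Lambda$ and choose $C^{1}$ coordinates near $p$ sending $p$ to $0\in\R^{3}=\R^{2}\times\R$, the plane $E^{c}_{p}\oplus E^{uu}_{p}$ to $\R^{2}\times\{0\}$ and $E^{ss}_{p}$ to $\{0\}\times\R$. Since the $cu$-manifolds through points of $\Lambda$ near $p$ have tangent spaces uniformly close to $E^{c}\oplus E^{uu}$ and are pairwise disjoint, after shrinking the neighbourhood each such local $cu$-manifold is a graph $\{(x,g_{z}(x)):x\in D\}$ over a fixed $2$-ball $D\subset\R^{2}$, with $g_{z}\colon D\to\R$ of class $C^{1}$, depending continuously on $z$, and with $g_{z}-g_{z'}$ of constant nonzero sign whenever $z\neq z'$; thus the height $s=g_{z}(0)$ linearly orders these graphs, where $s$ ranges over an interval $I$. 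The set $K\subset I$ of heights actually attained by a local $cu$-manifold is closed, because near $p$ the union of all local center--unstable manifolds is a closed set (uniform size, continuous dependence on the base point, compactness of $\Lambda$) and $K$ is its intersection with the vertical axis, read in the height coordinate; for $s\in K$ the corresponding graph function $h_{s}$ satisfies $h_{s}(0)=s$, the assignment $s\mapsto h_{s}$ is continuous, and $h_{s}-h_{s'}$ has the sign of $s-s'$.

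Next I would extend the family $\{h_{s}\}_{s\in K}$ to $\{h_{s}\}_{s\in I}$ by affine interpolation in $s$ across each complementary interval of $K$ (and by vertical translation on the two unbounded end intervals); the extended functions remain $C^{1}$ in $x$, are jointly continuous in $(s,x)$, and are strictly increasing in $s$ pointwise. Then $\Theta\colon I\times D\to\R^{3}$, $\Theta(s,x)=(x,h_{s}(x))$, is a continuous injection, so by invariance of domain $U_{i}:=\Theta(I\times D)$ is open and $\psi_{i}:=\Theta^{-1}\colon U_{i}\to I\times D$ is a homeomorphism which, on each slice $\{s\}\times D$, is the $C^{1}$ graph chart $(x,h_{s}(x))\mapsto x$. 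By the ordering, every local $cu$-manifold meeting $U_{i}$ coincides with one of the leaves $\Theta(\{s\}\times D)$ with $s\in K$ (a graph over $D$ meeting $U_i$ at one point is, being disjoint from or equal to each $\Theta(\{s'\}\times D)$, equal to one of them); hence for each $z\in\Lambda\cap U_{i}$ there is $a(z)\in I$ with $\psi_{i}\bigl(W^{cu}(z)\cap U_{i}\bigr)\subset\{a(z)\}\times D$, and $\psi_{i}|_{W^{cu}(z)}$ is a $C^{1}$ diffeomorphism onto its image. Finally, as $p$ ranges over $\Lambda$ the sets $U_{i}$ cover $\Lambda$, and a finite subcover yields the statement after relabelling $I,D$ as $I_{i},D_{i}$.

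The delicate point — and what I expect to be the main obstacle — is the extension of the center--unstable lamination to a genuine foliation of a full neighbourhood of $p$, i.e. the passage from $\{h_{s}\}_{s\in K}$ to $\{h_{s}\}_{s\in I}$, keeping the leaves $C^{1}$, pairwise disjoint, filling an open set, and still containing the prescribed $cu$-manifolds. In codimension one this reduces to interpolating between consecutive graph functions, and it works precisely because near $p$ the $cu$-manifolds are graphs over a common base and are linearly ordered by a transversal; establishing that order, together with the closedness near $p$ of the union of the local $cu$-manifolds (which rests on their uniform size, their continuous dependence on the base point, and compactness of $\Lambda$), is where the argument has to be made with care.
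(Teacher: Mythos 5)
Your proposal is correct and follows essentially the same route as the paper: the paper simply invokes that the center--unstable manifolds form a lamination, takes a local chart around each point of $\Lambda$ in which the plaques are horizontal slices, and concludes by compactness, exactly as you do. The only difference is that you spell out the construction of the chart (graphs over a common base, the codimension-one ordering, and interpolation across the gaps of the transversal set) which the paper leaves implicit in the phrase ``form a lamination''; this extra detail is sound and does not change the argument.
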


\begin{proof}
The center-unstable manifolds form a lamination. So for every point $x\in\Lambda$, considering a transversal section $I_{x}$, there exist a neighborhood $U_{x}$ and a homeomorphism $\psi_{x}:U_{x}\rightarrow I_{x}\times D$ such that $\psi_{x}(W^{cu}(z))\subset \{a(z)\}\times D$ for every $y\in\Lambda$ and every $z\in U_{x}\cap W^{cu}_{y}$, and $\psi_{x}|_{W^{cu}(z)}$ is a diffeomorphism into the image. By compactness of $\Lambda$, we can consider a finite sub-covering and the diffeomorphisms corresponding to this sub-covering.
\end{proof}

If the attractor is dynamically coherent then, by compactness, there exists some constant $R_{0}$ such that every center-unstable manifold has an internal radius greater than $R_{0}$. Fix $K_{1}\geq 1$ such that $K_{1}^{-1}\leq ||D(\psi_{i}|_{W^{cu}(z)})||\leq K_{1}$ for every $\psi_{i}$ as above and $R_{1}>0$ such that for every $x\in\Lambda$ the set $B^{cu}(x,R_{1})$ is contained in some $U_{i}$.

\begin{defi}
We say that two continuous curves $\gamma_{1}$ and $\gamma_{2}$ of finite length contained in a subset of $\R^{2}$ are \tb{$\theta$-transversal in neighborhoods of radius $r$ in $\R^{2}$} if:

For every $x_{1}\in\gamma_{1}$ , $x_{2}\in\gamma_{2}$ such that $d(x_{1},x_{2})<r$, there exist cones $C_{1}$ and $C_{2}$ with vertex at the points $x_{1}$ and $x_{2}$ such that $\gamma_{1}\cap B(x_{1},r)\subset C_{1}$, $\gamma_{2}\cap B(x_{2},r)\subset C_{2}$ and $\angle(v_{1},v_{2})\geq\theta$ for every tangent vectors $v_{1}$, $v_{2}$ at the points $x_{1}\in\gamma_{1}$, $x_{2}\in\gamma_{2}$ tangent to the cones $C_{1}$, $C_{2}$, respectively.
\end{defi}

When these curves are differentiable we think on the cones above as having arbitrarily small width around the tangent direction to the curve. But if the curve is not differentiable, then the cone must contain every possible tangent direction to the curve.

\begin{defi}
We say that two continuous curves $\gamma_{1}$ and $\gamma_{2}$ of finite length contained in the same center-unstable manifold $W^{cu}$ are \tb{$\theta$-transversal in neighborhoods of radius $r$} if $r<\frac{R_{1}K_{1}^{-1}}{2}$ and for every $i\in I$, every connected components $\tilde{\gamma}_{1}$ of $\psi_{i}(\gamma_{1}\cap U_i)$ and $\tilde{\gamma}_{2}$ of $\psi_{i}(\gamma_{2}\cap U_i)$ are $\theta$-transversal in neighborhoods of radius $r$ in $\R^{2}$.
\end{defi}

We will define a notion of transversality between two unstable curves them via the stable foliation.

\begin{defi} [Condition (H1)]
The \tb{Transversality Condition (H1)} holds if there exist constants $\epsilon_{0}>0$, $L>0$ and functions $\theta:(0,\epsilon_{0})\rightarrow \R^{+}$ and $r:(0,\epsilon_{0})\rightarrow\R^{+}$ such that the following is valid:

Given $\epsilon<\epsilon_0$, unstable curves $\gamma_1$ and $\gamma_2$ with length smaller than $L$ and a center-unstable manifold  $W^{cu}_3$  with $d^{ss}(\gamma_i,W^{cu}_3)<\frac{\epsilon_0}{2}$, $i=1,2$, and $d^{ss}(\gamma_1,\gamma_2)>\epsilon$, let $C$ be  an open set $C$ with product structure of $W^{ss}\times W^{cu}$, with $d^{ss}$ diameter smaller than $\epsilon_0$ and containing $\gamma_{1}$, $\gamma_{2}$, $W^{cu}_{3}$. Taking the stable projection $\pi^{ss}:C\rightarrow W^{cu}_{3}$, we ask that the curves $\pi^{ss}(\gamma_{1})$ and $\pi^{ss}(\gamma_{2})$ are $\theta(\epsilon)$-transversal in neighborhoods of radius $r(\epsilon)$.
\end{defi}

Every time we mean the Transversality Condition, it will be implicit that the manifold is three-dimensional and each subbundle is one-dimensional. 

\begin{figure}[h!]
  \centering
  \includegraphics[width=0.5\textwidth]{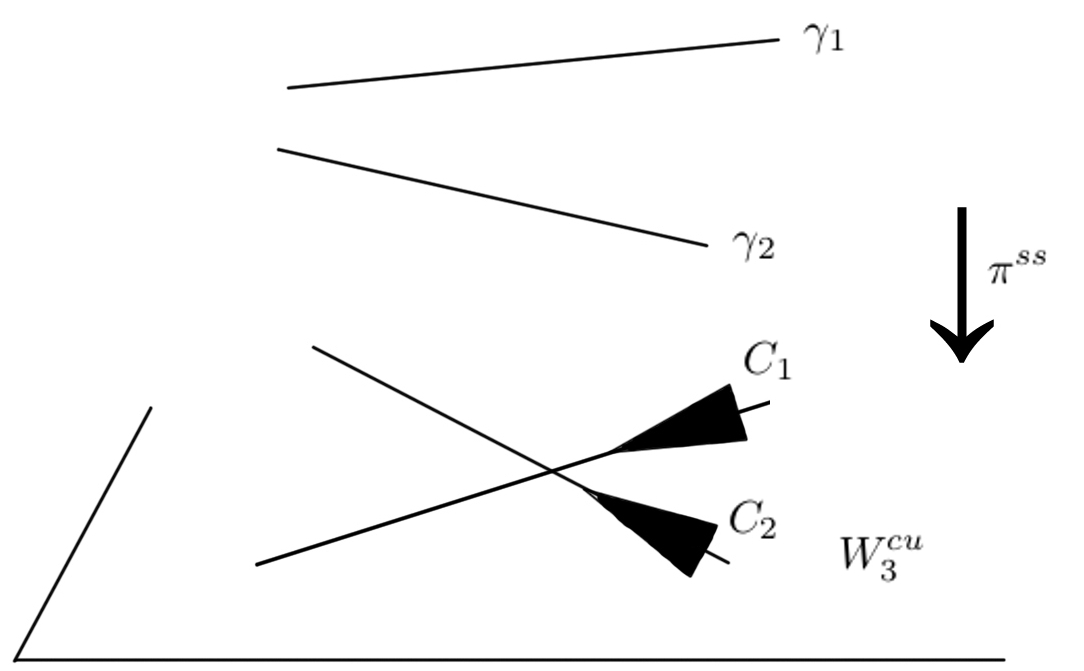}\\
\end{figure}

The transversality condition stated above is a quantitative way to say that $E^{ss}\oplus E^{uu}$ is non-integrable. It holds in several cases, for example, for contact Anosov flows \cite{C,D,L}, for algebraic automorphisms on Heisenberg nilmanifolds \cite{HHU} and for the attractor that will be constructed in Section 6. As we will see in Proposition 5.4, this condition is open for attractors where the stable foliation varies continuously in the $C^1$ topology. It is important to say that a similar condition, called uniform non-integrability (UNI), plays a fundamental role in the works of Chernov-Dolgopyat-Liverani \cite{C,D,L} for decay of correlation for contact Anosov flows.

\subsection{Main Results} 

In order to give the precise statements of our results, we will give two other main conditions, one of neutrality in the central direction and other of regularity of the stable foliation.

\begin{defi}[Condition (H2)] Let $\Lambda$ be a partially hyperbolic attractor and $\lambda_{c}^{-}$ , $\lambda_{c}^{+}$ , $\lambda_{uu}^{-}$ constants  as in Definition 2.3. The attractor $\Lambda$ has \tb{central direction  neutral} if $\lambda_{c}^{-}< 1 <\lambda_{c}^{+}$ and 
\begin{align*}
\frac{\lambda_{c}^{+}}{(\lambda_{c}^{-})^{2}\cdot\lambda_{uu}^{-}} < 1.
\end{align*}
\end{defi}

This condition of neutrality in the central direction occurs when $Df_{|_{E^{c}}}$ is close to an isometry. Note that if this condition is valid, then the center-unstable direction is volume-expanding.

\begin{defi}[Condition (H3)]

Condition (H3) holds if the stable foliation $\Fcal^{ss}_{f}$ is of class $C^{1}$ for every $f$ in a neighborhood of $f_0$ and the map $f\rightarrow \Fcal^{ss}_{f}$ is continuous at $f_{0}$ in the $C^{1}$-topology in the space of foliations defined in a fixed small neighbourhood of the attractor.
\end{defi}

This condition guarantees that the stable foliation of the attractor $\Lambda$ for $f$ are close in the $C^{1}$-topology to the ones of $\Lambda_{0}$ for $f_{0}$. It follows from Theorem 2.2 that this condition is valid when condition $\eqref{bunching.condition}$ is satisfied for $k=1$.

We now give the main results of this paper.

\begin{teoA}\label{teoA}
Given a diffeomorphism $f:M\rightarrow M$ of class $C^{r}$, $r\geq 2$,  a three-dimensional manifold $M$ and  a partially hyperbolic attractor $\Lambda$ of $f$. If we assume that the attractor is dynamically coherent and that

(H1) Transversality between unstable leaves via the stable foliation;

(H2) The central direction is neutral;

(H3') The stable foliation $\Fcal^{ss}$ is Lipschitz.

Then, $f$ admits a finite number of physical measures supported in $\Lambda$, coinciding with the ergodic u-Gibbs and the union of their basins has full measure in $B(\Lambda)$.
\end{teoA}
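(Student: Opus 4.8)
The plan is to implement the Doeblin–Fortet (Lasota–Yorke) strategy announced in the introduction. Fix an ergodic u-Gibbs state $\mu$ (which exists by Proposition 2.5, and whose ergodic components are again u-Gibbs by Proposition 2.8(4)). The goal is to show that the push-forward of $\mu$ by local stable projections onto center-unstable plaques is absolutely continuous with respect to the induced Lebesgue measure on those plaques; granting this, one argues as in the hyperbolic SRB theory that $\mu$ is a physical measure and that the (necessarily finite) collection of ergodic u-Gibbs states has basins covering a full-measure subset of $B(\Lambda)$. I would begin by setting up, on each foliated box, the norm on finite measures carried by center-unstable plaques that behaves like an $L^2$-norm on densities in the absolutely continuous case — this is the object on which the Main Inequality is phrased. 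The key point is that along unstable leaves the disintegrations of $\mu$ already have densities bounded away from $0$ and $\infty$ (Proposition 2.8(1)), so the only direction in which regularity must be gained is the center direction transverse to $\Fcal^{uu}$ inside $W^{cu}$.

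Next I would establish the Main Inequality: if $\nu$ is a finite measure on a center-unstable plaque obtained as a local stable projection of (a piece of) $\mu$, then applying $f^n$, re-projecting, and subdividing into small boxes yields an estimate of the form $\|f^n_*\nu\| \le A\lambda^n \|\nu\| + B\,|\nu|$ for suitable contraction factor $\lambda<1$ and constant $B$, where $|\nu|$ denotes total mass. The contraction comes from two mechanisms acting together: the volume expansion of $E^c\oplus E^{uu}$ guaranteed by condition (H2) — concretely the inequality $\lambda_c^+/((\lambda_c^-)^2\lambda_{uu}^-)<1$ is exactly what is needed so that the "$L^2$-type" norm contracts under $f_*$ faster than mass spreads — and the transversality condition (H1), which prevents the projected unstable pieces from piling up in a single direction: after iteration, stable projection of the many unstable discs lands transversally in many directions, so the mass is smeared in the center direction rather than concentrating on a lower-dimensional set. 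Here (H3') (Lipschitz stable foliation) is used to control the distortion of the stable holonomies $\pi^{ss}$ that appear when re-projecting, keeping the change-of-variables Jacobians bounded so the norm estimates survive. Iterating the Main Inequality and passing to Cesàro averages (using that $\mu$ is $f$-invariant, so its projections are, up to the foliated-box bookkeeping, reproduced) forces $\|(\pi^{ss})_*\mu\|<\infty$ on every plaque, hence absolute continuity.

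Once absolute continuity of the stable projections is in hand, the structure of the argument is classical: the measure $\mu$, being u-Gibbs with absolutely continuous center-unstable projections, is equivalent on a full-$\mu$-measure set to a set that is a union of local stable manifolds of a set of positive Lebesgue measure in $B(\Lambda)$; absolute continuity of the stable foliation (which for a Lipschitz foliation is automatic) then shows $\mathrm{Leb}(B(\mu))>0$, so $\mu$ is physical. Finiteness follows because distinct ergodic physical measures have disjoint basins of positive Lebesgue measure and $m$ is finite; and by Proposition 2.9 every Lebesgue-generic point's empirical averages accumulate on u-Gibbs states, which by ergodic decomposition and the above are convex combinations of the finitely many ergodic ones, forcing $m$-a.e. point in $B(\Lambda)$ into one of the basins. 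The main obstacle — the genuinely technical heart — is proving the Main Inequality, specifically making precise the competition between the norm-contraction rate supplied by (H2) and the error term created when unstable pieces are cut across the boundaries of the fixed finite family of foliated boxes upon iteration, while simultaneously keeping the transversality angle $\theta(\epsilon)$ and radius $r(\epsilon)$ from (H1) uniform along the orbit; this is where the quantitative form of (H1) and the Lipschitz control from (H3') must be used in tandem.
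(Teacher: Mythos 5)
Your outline of the technical core reproduces the paper's strategy faithfully: the $L^2$-type norm on stable projections over a fixed finite family of foliated boxes, a Doeblin--Fortet inequality whose contraction factor comes exactly from $\lambda_c^+/((\lambda_c^-)^2\lambda_{uu}^-)<1$ (condition (H2)) applied to the ``diagonal'' terms, the transversality condition (H1) used to control the interaction of distinct projected unstable pieces, the Lipschitz holonomies from (H3') controlling the Jacobians, and then invariance of $\mu$ to iterate the inequality and conclude absolute continuity of $(\pi_i)_*\mu$ with an $L^2$ bound on the density. Up to minor imprecisions (the paper's inequality carries an $n$-dependent radius inflation $c_n r$ and is applied at one fixed $N$ with $f^N_*\mu=\mu$; no Ces\`aro averaging is needed there), this is the same argument.

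The genuine gaps are in your endgame. First, ``distinct ergodic physical measures have disjoint basins of positive Lebesgue measure and $m$ is finite'' does not give finiteness: countably many disjoint sets of positive measure fit inside a probability space. What is needed, and what the paper proves, is a \emph{uniform} lower bound on $m(B(\mu))$ over all ergodic u-Gibbs $\mu$; this comes from the fact that the constants in the Main Inequality do not depend on $\mu$ (so the $L^2$ bound on the projected density is uniform) together with a quantitative Cauchy--Schwarz estimate $m^{cu}(\pi_i B(\mu))\geq (2s_0K)^{-1}$ and the bounded Jacobian of the stable holonomy. Your proposal never extracts this uniformity, so as written the finiteness step fails. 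Second, the basin-coverage argument is a non sequitur: Proposition 2.9 concerns accumulation points of $\frac1n\sum_{j}f^j_*(m_E)$ for a \emph{positive-measure set} $E$, not pointwise empirical measures of Lebesgue-generic points, and even if a point's empirical averages accumulated on convex combinations of the finitely many ergodic u-Gibbs, that would not place the point in the basin of any single one of them (basin membership requires convergence to one ergodic measure). The paper handles this ``problem of the basins'' by a separate contradiction argument that your proposal lacks: assuming $E=B(\Lambda)\setminus\bigcup_i B(\mu_i)$ has positive Lebesgue measure, the limit $\sigma_\infty$ of $\frac1n\sum_j f^j_*(m_E)$ is a u-Gibbs, hence its projections $\tilde\sigma_\infty$ are absolutely continuous; invariance of $E$ and of $X=\bigcup_i B(\mu_i)$ forces the densities of $\tilde\sigma_{n_k}$ and $\tilde\sigma_\infty$ to be supported on essentially disjoint subsets of the plaque, so their $L^2$ pairing vanishes, contradicting $\langle \tilde\sigma_{n_k},\tilde\sigma_\infty\rangle\to\|\tilde\sigma_\infty\|^2_{L^2}>0$. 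Without this (or an equivalent) argument, the claim that the union of the basins has full measure in $B(\Lambda)$ is unproved.
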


Theorem A will be proved in Section 5, the technical tools for the proof of this result will be developed throughout Sections 3 and 4. As a consequence of Theorem A we have the Corollary below.

\begin{corollaryB}\label{mainB}
Given a diffeomorphism $f_{0}:M\rightarrow M$ of class $C^{r}$, $r\geq 2$, a three-dimensional manifold  $M$ and  a partially hyperbolic attractor $\Lambda_{0}$ of $f_0$. If we assume that $\Lambda_{0}$ is robustly dynamically coherent and that

(H1) Transversality between unstable leaves via the stable foliation;

(H2) The central direction is neutral;

(H3) $f\rightarrow \Fcal^{ss}_{f}$ is continuous at $f_0$ in the $C^{1}$-topology.

Then, there exists an open set $\mathcal{U}$ containing $f_{0}$ such that every $f\in\mathcal{U}$ admits a finite number of physical measures supported in $\Lambda$, coinciding with the ergodic u-Gibbs and the  union of their basins has full measure in $B(\Lambda)$.
\end{corollaryB}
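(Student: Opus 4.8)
The plan is to deduce Corollary B from Theorem A by showing that the hypotheses of Theorem A are inherited by all $f$ in a suitable $C^r$-neighborhood $\mathcal U$ of $f_0$. Since Theorem A already supplies, for a single diffeomorphism, the finite set of physical measures coinciding with the ergodic u-Gibbs states and the full-measure basin statement, all that remains is the robustness of the three standing assumptions. I would organize the argument as follows. First, partial hyperbolicity of an attractor is a $C^1$-open condition: the dominated splitting $E^{ss}\oplus E^c\oplus E^{uu}$ with the quantitative bounds of Definition 2.3 persists, with slightly weakened constants, for all $g$ in a $C^1$-neighborhood of $f_0$, and the trapping region $U$ with $\overline{g(U)}\subset U$ can be kept fixed; hence each such $g$ has a partially hyperbolic attractor $\Lambda_g=\bigcap_{n\ge 0}g^n(U)$ with one-dimensional $E^{ss}$, $E^c$, $E^{uu}$. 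Second, robust dynamical coherence is assumed outright in the statement, so after possibly shrinking $\mathcal U$ every $f\in\mathcal U$ has its invariant center-unstable and center-stable manifolds $W^{cu}_x$, $W^{cs}_x$, and one checks that the uniform internal radius $R_0$, the chart constants $K_1$, and the radius $R_1$ from Proposition 2.2 and the paragraph following it can be chosen uniformly on $\mathcal U$ (this uses continuity of the $W^{cu}$-lamination, which is part of dynamical coherence together with the $C^r$-Section Theorem).

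Next I would handle condition (H2): the numbers $\lambda_c^-$, $\lambda_c^+$, $\lambda_{uu}^-$ can be taken as the infimum/supremum of the relevant norms and conorms of $Dg$ restricted to the invariant subbundles, and these vary continuously with $g$ in the $C^1$-topology because the subbundles themselves vary continuously; since the inequality $\lambda_c^-<1<\lambda_c^+$ and the open inequality $\lambda_c^+/((\lambda_c^-)^2\lambda_{uu}^-)<1$ are strict, they persist on a $C^1$-neighborhood, so (H2) holds robustly. For condition (H3'), observe that we are given condition (H3), i.e. $f\mapsto\Fcal^{ss}_f$ is $C^1$ for $f$ near $f_0$ and continuous at $f_0$ in the $C^1$-topology of foliations; in particular each such $f$ has a $C^1$ — hence Lipschitz — stable foliation, so (H3') holds for every $f$ in the neighborhood. (One should note that (H3') for $f_0$ itself is what feeds into Theorem A; continuity at $f_0$ is the extra ingredient needed only for (H1) below.)

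The main obstacle is the robustness of the transversality condition (H1), and this is exactly the point the paper flags as ``Proposition 5.4,'' so I would invoke it: the transversality condition is open among attractors whose stable foliation varies continuously in the $C^1$-topology. Concretely, I would argue that (H1) is a condition about angles between the $\pi^{ss}$-projections of unstable curves into center-unstable leaves, measured in the fixed charts $\psi_i$; the unstable curves depend continuously (in $C^1$) on the diffeomorphism because $E^{uu}_g$ and $W^{uu}_g$ do, the center-unstable leaves and charts depend continuously by robust dynamical coherence, and the stable holonomy $\pi^{ss}$ depends continuously in $C^1$ by hypothesis (H3). Because (H1) asks for a definite angle $\theta(\epsilon)$ in neighborhoods of a definite radius $r(\epsilon)$, and because these quantities are bounded below on the compact family of configurations of curves with $d^{ss}(\gamma_1,\gamma_2)>\epsilon$, a $C^1$-small perturbation of all the geometric data degrades $\theta$ and $r$ by an arbitrarily small amount; so after replacing $\theta$ and $r$ by, say, $\theta/2$ and $r/2$ and shrinking $\mathcal U$, the condition survives for all $f\in\mathcal U$. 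Once (H1), (H2), (H3') are verified to hold uniformly on $\mathcal U$ along with partial hyperbolicity and dynamical coherence, Theorem A applies verbatim to each $f\in\mathcal U$ and gives the conclusion. The delicate points to be careful about are the uniformity of all constants ($\epsilon_0$, $L$, $R_0$, $R_1$, $K_1$) over $\mathcal U$ rather than merely their existence for each individual $f$, and making sure the trapping region and hence $B(\Lambda_f)$ can be taken common to the whole neighborhood.
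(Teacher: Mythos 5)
Your proposal is correct and takes essentially the same route as the paper: the paper's proof of Corollary B is exactly the observation that (H2), (H3), and robust dynamical coherence are open conditions, that (H1) is open under (H3) by Proposition 5.4, that (H3) gives a $C^1$ (hence Lipschitz) stable foliation so (H3') holds for every nearby $f$, and then Theorem A is applied to each $f\in\mathcal{U}$. One caution: your parenthetical justification of the openness of (H1) (degrading $\theta(\epsilon)$ and $r(\epsilon)$ to $\theta/2$, $r/2$ after a $C^1$-small perturbation) is not uniform in $\epsilon$, since $\theta(\epsilon)\to 0$ as $\epsilon\to 0$ while the angular error caused by a fixed-size perturbation does not, and this is precisely why the paper first reduces (Proposition 5.3) to verifying transversality only for unstable curves whose stable distance lies in a fixed fundamental domain $[a,Ia]$, at a definite angle and radius, before running the cone argument of Proposition 5.4 --- but since you invoke Proposition 5.4 as stated rather than relying on that sketch, your deduction of Corollary B stands.
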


In Section 6, we will describe the construction of an attractor with central direction neutral satisfying the conditions of transversality and of regularity of the stable foliation. Using this construction will prove the following.

\begin{teoC}\label{mainC}
There exist $f_{0}:M^{3}\rightarrow M^{3}$ and a partial hyperbolic attractor $\Lambda_0$ that is robustly nonhyperbolic and is robustly dynamically coherent satisfying the conditions (H1), (H2) and (H3). Moreover, this attractor is robustly transitive.
\end{teoC}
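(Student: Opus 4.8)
The plan is to build $f_0$ as a skew-product–like perturbation of a linear model on $M^3 = \mathbb{T}^3$, or alternatively as a fibered system over a hyperbolic surface dynamics, chosen so that all the required structure is present and robust. Concretely, I would start from an Anosov diffeomorphism $A$ of $\mathbb{T}^2$ (e.g.\ a hyperbolic toral automorphism) with strong splitting $E^{ss}_A \oplus E^{uu}_A$, and consider $F_0 = A \times R$ on $\mathbb{T}^2 \times S^1$ where $R$ is a rotation, then modify it to $f_0$ by a $C^r$-small perturbation that (i) keeps a normally hyperbolic attractor, (ii) makes the central behavior genuinely non-isometric but with Lyapunov exponents still forced to be near zero, and (iii) breaks joint integrability of $E^{ss} \oplus E^{uu}$ in the quantitative sense of (H1). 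To get an \emph{attractor} rather than the whole manifold, one replaces the $S^1$ factor / plugs in a bump so that $\overline{f_0(U)} \subset U$ for a solid-torus-like region $U$; the DA-type (derived-from-Anosov) or Mañé-type constructions give transitivity and robust nonhyperbolicity on the nose.

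The steps, in order, would be: (1) Fix the ambient linear or normally hyperbolic model and verify partial hyperbolicity with one-dimensional bundles; choose constants so that the bunching condition \eqref{bunching.condition} holds for $k=1$, which by Theorem \ref{bunching} (applied with $E_1 = E^{ss}$, $E_2 = E^c \oplus E^{uu}$ or directly to $E^{ss}$) yields that $\mathcal{F}^{ss}_f$ is $C^1$ and varies $C^1$-continuously — this gives (H3) and hence also (H3$'$), and it is a $C^2$-open condition. (2) Arrange dynamical coherence robustly: in the normally hyperbolic / fibered setting the center-stable and center-unstable bundles are themselves $C^1$ (being sums involving the $C^1$ strong bundles and a smooth fiber direction), so by the remark after Definition 2.12 dynamical coherence is open. (3) Choose the central dynamics so that $\lambda_c^- < 1 < \lambda_c^+$ with $\lambda_c^\pm$ as close to $1$ as we like, so that $\lambda_c^+ / ((\lambda_c^-)^2 \lambda_{uu}^-) < 1$, giving (H2); this is again a $C^2$-open condition since the constants vary continuously. (4) The main work: verify (H1) for $f_0$ and persistently. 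For the linear-model backbone I would either invoke that on Heisenberg nilmanifolds algebraic automorphisms satisfy it \cite{HHU}, or compute the holonomy directly. The idea is that the stable holonomy between two center-unstable leaves differs from the "product" holonomy by an amount governed by the non-integrability cocycle; a single nonzero value of this cocycle, together with the invariance under the expanding dynamics on unstable leaves, forces the projected unstable curves to accumulate many distinct tangent directions at small scales, yielding the quantitative $\theta(\epsilon)$-transversality at scale $r(\epsilon)$. Since (H1) is open in the $C^1$-topology of $\mathcal{F}^{ss}_f$ (this is Proposition 5.4 in the paper, which I may assume), and $\mathcal{F}^{ss}_f$ varies $C^1$-continuously by step (1), (H1) persists on a $C^2$-neighborhood $\mathcal{U}$. (5) Robust transitivity and robust nonhyperbolicity: transitivity follows because the unstable foliation is, by the DA/Mañé construction, minimal (every unstable leaf is dense) and the attractor is $W^{uu}$-saturated (Remark after Theorem \ref{2.1}); nonhyperbolicity is robust because the central exponent changes sign on different periodic orbits inside the attractor (one builds two periodic points, one with central expansion and one with central contraction, both of which persist), so no dominated further splitting of $E^c$ can be uniformly hyperbolic.

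The main obstacle I expect is step (4): producing a system where the transversality condition (H1) holds in the precise quantitative form of Definition 2.16 — with explicit modulus functions $\theta(\epsilon)$ and $r(\epsilon)$ — rather than merely "$E^{ss}\oplus E^{uu}$ is non-jointly-integrable." One must control the stable holonomy maps between center-unstable leaves well enough to lower-bound the angle between the projected unstable curves uniformly over all admissible configurations $(\gamma_1,\gamma_2,W^{cu}_3)$ and all scales, which requires either an exact algebraic computation (pushing toward the nilmanifold example) or a careful distortion/transfer argument showing the non-integrability defect cannot be killed by the holonomy along the way. Secondary technical points are making the attractor genuinely proper (the trapping region) while preserving all the open conditions simultaneously, and checking that the perturbation breaking joint integrability does not destroy the bunching needed for (H3); these are handled by doing the perturbation $C^r$-small and localized, keeping all the strict inequalities with room to spare.
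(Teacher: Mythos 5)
There is a genuine gap, and it sits exactly where you flag it: step (4). Your proposed backbone $F_0=A\times R$ on $\mathbb{T}^2\times S^1$ has $E^{ss}\oplus E^{uu}$ tangent to the $\mathbb{T}^2$-fibers, hence \emph{jointly integrable}, so (H1) fails outright for the unperturbed model; and passing from ``a $C^r$-small perturbation makes the non-integrability cocycle nonzero somewhere'' to the uniform quantitative transversality of Definition 2.16 (a modulus $\theta(\epsilon)$, $r(\epsilon)$ valid for \emph{all} admissible pairs $\gamma_1,\gamma_2$ and all scales) is precisely the hard content of the theorem, not a consequence of a single nonzero holonomy defect plus ``accumulation of directions.'' The paper avoids this by choosing a concrete solenoidal model $F_{0}(x,y,z)=(lx,\lambda_{c}y+g(x),\lambda_{ss}z+h(x))$ on $S^1\times[-1,1]^2$ in which the stable projection is the vertical one and the unstable slope is the explicit series $\alpha^{uu}(p)=\sum_{j\ge 0}\lambda_{uu}^{-1}\rho^{j}\alpha(F_0^{-(j+1)}(p))$, $\rho=\lambda_c/\lambda_{uu}$; the lower bound $|\alpha^{uu}(p)-\alpha^{uu}(p')|\ge C(\epsilon)$ is then a geometric-series estimate in which the first differing symbol of the itinerary dominates the tail (this is where $\rho<1/3$, resp.\ the explicit trigonometric bounds of Example 2, are used). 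Citing the Heisenberg nilmanifold example does not close the gap either, since you would still have to carry (H1), (H2), (H3), a trapping region, and robust transitivity through whatever deformation you perform there.

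A second, related gap: the deformation that creates robust nonhyperbolicity cannot be treated as a generic ``$C^r$-small perturbation plus openness of (H1).'' In the paper the Ma\~n\'e-type modification is a \emph{large} deformation (a pitchfork bifurcation changing the index of a periodic point), applied to a high iterate $F_0^n$; the point of Proposition 6.2 is that for $n$ large the unstable direction of $F_{\mu,n}$ stays in a thin cone around that of $F_0^n$ uniformly in $\mu\in[0,1]$, and Proposition 6.4 combined with Proposition 5.3 (reduction to a fundamental domain $J=[a,\lambda_{ss}^{-1}a]$ of stable distances) converts this cone control into (H1) for every $F_{\mu,n}$. Finally, robust transitivity is not obtained ``on the nose'' from a DA/Ma\~n\'e construction: the paper proves it (Proposition 6.3, Claim 6.1) by showing backward orbits of any open set can avoid the support $D$ of the deformation, together with $(L,\varrho)$-density of long unstable curves in center-stable disks, and only then deduces robust nonhyperbolicity from the coexistence of periodic points of different indices inside a robustly transitive attractor. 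Your outline shares the same skeleton for (H2), (H3) (bunching), and the use of Proposition 5.4, but without (a) an explicit model in which (H1) is actually verified, (b) a mechanism controlling the unstable bundle through the index-changing deformation, and (c) a proof of robust transitivity, the argument does not yet establish Theorem C.
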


The proof of Theorem C corresponds to a construction considering initially a hyperbolic solenoidal attractor satisfying the Transversality Condition (H1) and deforming the dynamics in the central direction inside a neighborhood of a periodic point in a similar way to Ma\~n\'e's example \cite{Ma}.

\section{Toolbox}\label{s.toolbox}

For the technical part of Theorem A we will make use of certain sets called boxes and certain norm that will be defined in this Section.

\subsection{The Boxes}

We will consider subsets of the manifold where  the projection through the stable foliation into one fixed center-unstable manifold is well defined and such that every unstable curve that intersects these subsets must cross them.

\begin{defi}
Given  a diffeomorphism $f:M\rightarrow M$ and a partially hyperbolic attractor $\Lambda$ that is dynamically coherent, we say that a quadruple $(C,W,\tilde{W},\pi)$ is a \tb{box} if $C$ is the image of an embedding $h:I^{uu}\times I^{a}\times I^{b}\rightarrow M$, where $I^{uu}$, $I^{a}$ and $I^{b}$ are intervals, such that:

1) The function $h_{z_{0}}$ given by $h_{z_{0}}(x,y) = h(x,y,z_{0})$ is an embedding into a surface that coincides with a center-unstable manifold if its image intersects the attractor. The set $W$ is the image of $h(I^{uu}\times I^{a} \times \{0\})$ and intersects $\Lambda$.

2) If $h(x_0,y_0,z_0)\in\Lambda$, then $\gamma(t)=h(t,y_0,z_0)$ is an unstable curve.
 
3) For every $x,y\in C$,  every connected component of $W^{cu}_{x}\cap C$ and of $W^{ss}_{y}\cap C$ intersect at most once.

4) The set $\tilde{W}$ is a connected center-unstable manifold with finite diameter containing $W$ and such that $\tilde{W}\cap W^{ss}_{y}\neq\emptyset$ for every $y\in C$. The application $\pi:C\rightarrow \tilde{W}$ sends each $y\in C$ into the point $W^{ss}_{loc}(y)\cap\tilde{W}$.
\end{defi}

The first condition above says that the set $C\cap\Lambda$ can be seen as a family of center-unstable manifolds, the second one states that if an unstable curve intersects $C$ then it crosses $C$, and the third and fourth ones guarantee that the local stable projection $\pi^{ss}: C\rightarrow \tilde{W}\supset W$ are well defined.

Since every point in the attractor admits arbitrarily small boxes containing it, it is possible to consider a finite family of boxes $\{(C_i,W_i,\tilde{W}_i,\pi_i)\}$ such that the sets $\pi_{i}^{-1}(W_{i})$ cover the attractor $\Lambda$, that is, $\Lambda\subset\underset{i}{\cup} \pi_{i}^{-1}(W_{i})$.

\subsection{The Norms}

We will define a norm that estimates a kind of regularity of the projection of measures into a fixed center-unstable manifold, it will be in terms of this norm that we will describe a criteria of absolute continuity for the stable projection of measures. This norm will be used jointly with the boxes defined in the previous section.

\begin{defi}
Let $X\subset M$ be  a submanifold of dimension 2, $m_X$ be the Lebesgue measure in $X$ induced by the Riemmanian metric, $\mu_1$ and $\mu_2$ be finite measures defined in $X$ and $r>0$ fixed. We define the bilinear form $\langle\mu_1,\mu_2\rangle_{X,r}$ by
\begin{align*}
\langle \mu_1 , \mu_2 \rangle_{X,r}=\frac{1}{r^{4}} \underset{X}{\int}{\mu_{1}(B(z,r))\mu_{2}(B(z,r)) \d m_{X}(z)}
\end{align*}

The bilinear form  $\langle\mu_1,\mu_2\rangle_{X,r}$ induces the \tb{norm}
\begin{align}
\nonumber ||\mu||_{X,r}=\sqrt{\langle \mu, \mu \rangle_{X,r}} 
\end{align}
\end{defi}
Let us now prove some facts for the norm $||\cdot||_{X,r}$.

\begin{lemma}
Given two finite families of center-unstable manifolds $\{W_{i}\}$ and $\{\tilde{W}_{i}\}$ with bounded diameter, $W_{i}\subset\tilde{W}_{i}$ for every $i$ and such that $d(W_{i}, \partial\tilde{W}_{i})>r_{0}$. Then there exists a constant $C_{0}\geq 1$ such that:
\begin{align*}
||\nu||_{\tilde{W}_{i},r_{2}} \leq C_{0} || \nu ||_{\tilde{W}_{i},r_{1}}
\end{align*}
for every $0<r_{1}\leq r_{2} \leq r_{0} \leq 1$, every $i$ and every $\nu$ supported on $W_{i}$.
\end{lemma}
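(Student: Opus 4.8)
The plan is to compare, for a measure $\nu$ supported on $W_i$, the quantity $\|\nu\|_{\tilde W_i,r_2}$ with $\|\nu\|_{\tilde W_i,r_1}$ by a covering argument: cover a ball of radius $r_2$ by a bounded number of balls of radius $r_1$, so that the integrand at scale $r_2$ is controlled by a sum of integrands at scale $r_1$, then integrate. The key point is that this bounded number depends only on the geometry (dimension $2$ of the surfaces and a uniform bound on distortion of the Riemannian metric on the $\tilde W_i$), not on $i$ or on $\nu$, because the family $\{\tilde W_i\}$ has bounded diameter and, being center-unstable manifolds, has uniformly bounded geometry.

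First I would fix $z\in \tilde W_i$ and estimate $\nu(B(z,r_2))$. Since $B(z,r_2)$, intersected with the surface $\tilde W_i$, is covered by $N=N(r_1,r_2)$ balls $B(z_j,r_1)$ with centers $z_j\in \tilde W_i$ — here $N\le C(r_2/r_1)^2$ by a standard volume-packing argument using that $\tilde W_i$ is $2$-dimensional with uniformly bounded metric distortion — we get $\nu(B(z,r_2))\le \sum_{j=1}^N \nu(B(z_j,r_1))$. I would choose the $z_j$ to lie on a maximal $r_1$-separated net of $B(z,r_2)\cap\tilde W_i$, so also each $z_j$ lies within distance $r_2$ of $z$, and $B(z_j,r_1)\subset B(z,2r_2)$. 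Then, using Cauchy–Schwarz on the sum of $N$ terms,
\begin{align*}
\nu(B(z,r_2))^2 \le N \sum_{j=1}^N \nu(B(z_j,r_1))^2 .
\end{align*}

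Next I would integrate over $z\in \tilde W_i$ against $m_{\tilde W_i}$ and interchange sum and integral. The remaining task is to bound $\int_{\tilde W_i} \sum_{j} \nu(B(z_j,r_1))^2\, dm_{\tilde W_i}(z)$ by a constant times $\int_{\tilde W_i}\nu(B(w,r_1))^2\, dm_{\tilde W_i}(w)$. This is where the net structure is used: each $z_j=z_j(z)$ ranges over a subset of $B(z,r_2)\cap\tilde W_i$, and for a fixed target point $w$, the set of $z$ for which some $z_j(z)$ is within distance $r_1$ of $w$ is contained in a ball of radius $r_2+r_1\le 2r_2$ around $w$, whose $m_{\tilde W_i}$-measure is at most $C r_2^2\le C r_0^2\le C$ (bounded, since $r_0\le 1$); moreover, by the separation of the net, at most a bounded number of the $z_j(z)$ can be $r_1$-close to $w$. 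A cleaner route: bound $\nu(B(z_j,r_1))^2\le \left(\tfrac{1}{r_1^2}\int_{B(z_j,2r_1)\cap\tilde W_i}\nu(B(w,r_1))\,dm_{\tilde W_i}(w)\right)^2$ up to a metric-distortion constant, apply Cauchy–Schwarz in $w$, and then the double integral over $z$ and $w$ is handled by Fubini because the kernel $\mathbf 1[d(z,w)<3r_2]$ has rows and columns of $m_{\tilde W_i}$-mass $\le C r_2^2 \le C$. Collecting the powers of $r_1$, $r_2$, $r_0$ and recalling $r_1\le r_2\le r_0\le 1$, all scale factors combine into a single constant $C_0$ depending only on the dimension and on the uniform geometry bounds for $\{\tilde W_i\}$, not on $i$, $\nu$, $r_1$ or $r_2$; this gives $\|\nu\|_{\tilde W_i,r_2}\le C_0\|\nu\|_{\tilde W_i,r_1}$.

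The main obstacle is bookkeeping the powers of $r_1, r_2$ and verifying that the covering/overlap constants are genuinely uniform in $i$ — this rests on the fact that center-unstable manifolds of bounded diameter have uniformly bounded second fundamental form and hence uniformly controlled induced Riemannian volume (comparable to Euclidean volume in charts), so that both the packing number $N\lesssim (r_2/r_1)^2$ and the volume bound $m_{\tilde W_i}(B(w,\rho))\lesssim \rho^2$ hold with $i$-independent constants. Once that uniformity is in hand, the inequality follows from the elementary covering and Cauchy–Schwarz estimates sketched above, with the hypothesis $d(W_i,\partial\tilde W_i)>r_0$ ensuring that for $\nu$ supported on $W_i$ all the balls $B(z,r)$ with nonzero $\nu$-mass, $r\le r_0$, stay well inside $\tilde W_i$ so no boundary effects intervene.
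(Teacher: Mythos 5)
Your proposal is correct and follows essentially the same route as the paper: cover each ball of radius $r_{2}$ by roughly $(r_{2}/r_{1})^{2}$ balls of radius $r_{1}$ (with constants uniform in $i$, $z$ and the radii, coming from the finite chart system for the center-unstable lamination), apply Cauchy--Schwarz, and convert the resulting sum of integrals back into $\|\nu\|^{2}_{\tilde W_{i},r_{1}}$, the hypothesis $d(W_{i},\partial\tilde W_{i})>r_{0}$ guaranteeing that all relevant balls stay inside $\tilde W_{i}$. The only minor difference is in that last conversion: the paper takes the covering centers to be fixed translates of $z$ in chart coordinates and changes variables $z\mapsto w_{k}(z)$, whereas your ``cleaner route'' uses the averaging bound $\nu(B(z_{j},r_{1}))\lesssim r_{1}^{-2}\int_{B(z_{j},2r_{1})}\nu(B(w,r_{1}))\,\d m(w)$ together with bounded overlap of the net balls and Fubini, which is equally valid and yields the same power count in $r_{1},r_{2}$.
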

\begin{proof}
Let $\tilde{C}_{0}$ be a constant independent of $r_{1}$, $r_{2}$ such that it is possible to cover every ball $B(0,r_{2})$ in the plane $\R^2$ with $\left\lceil \tilde{C}_{0} \frac{r_{2}}{r_{1}}\right\rceil^{2}$ balls of radius $r_1$. If $z_k$ are the center of these balls, then every ball $B(x,r_{2})$ is covered by the balls $B(x+z_k,r_{1})$. Since we have a finite number of center-unstable manifolds $W_i^{cu}$ that are diffeomorphic images of subsets of $\R^2$ by $\phi_i=\psi_{i}{|_{\psi_i^{-1}(W_i)}}$, we may take a uniform Lipschitz constant $K>0$ of every $\phi_i$ and $\phi_i^{-1}$ and get that the ball $B^{cu}(z,r_{2})$ is contained in $\psi_i (B(\psi_i^{-1}(z), K r_2))$, which is covered by $\left\lceil \tilde{C}_{0} \frac{K r_{2}}{K^{-1} r_{1}}\right\rceil^{2}$ sets $\psi_i B(\psi_i^{-1}(z) + z_k, K^{-1} r_1)$, and each one of them is covered by a ball $B^{cu}(\psi_i(\psi_i^{-1}(z)+z_k), r_1)$. So there exists a constant $C_{0}$ independent of $r_{1}$, $r_{2}$, $x$ and $i$ such that it is possible to cover every $B^{cu}(x,r_{2})$ in $W^{cu}$ with $\left\lceil C_{0} \frac{r_{2}}{r_{1}}\right\rceil^{2}$ disks $B^{cu}(w_{k},r_{1})$.

We have the following estimate:
\begin{align}
\nonumber ||\mu||^{2}_{\tilde{W}_{i}, r_{2}}&=\frac{1}{r_{2}^{4}}\int_{W_{i}}\mu(B(z,r_{2}))^2 \d m^{cu}(z)\\
\nonumber &\leq \frac{1}{r_{2}^{4}}\int_{\tilde{W}_{i}} \Big(\sum_{k} \mu(B(w_{k},r_{1})\Big)^{2}) \d z\\
\nonumber &\leq \frac{1}{r_{2}^{4}}C_{0}^2\left(\frac{r_{2}^2}{r_{1}^2}\right)^{2} \sum_{k}\int_{\tilde{W}_{i}}\mu(B(w_{k},r_{1}))^{2} \d w_{k} \\
\nonumber &\leq C_{0}^{2} ||\mu||_{\tilde{W}_{i}, r_{1}}^{2}
\end{align}
\end{proof}

\begin{lemma}
Let $\{\nu_{n}^{1}\}_n$ and $\nu^{1}$ be finite measures defined in a center-unstable manifold $W$ such that $\nu_{n}^{1}\overset{*}{\rightarrow}\nu_{\infty}^{1}$ and $r>0$ fixed, then:
\begin{align*}
\lim_{n\rightarrow\infty}||\nu_{n}^{1}||_{W,r}= ||\nu^{1}||_{W,r}
\end{align*}
Moreover, if we consider finite measures $\{\nu_{n}^{2}\}_n$ and $\nu^{2}$  also defined in $W$ such that $\nu_{n}^{2}\overset{*}{\rightarrow}\nu_{\infty}^{2}$, then:
\begin{align*}
\lim_{n\rightarrow\infty}\langle\nu_{n}^{1},\nu_{n}^{2}\rangle_{W,r}= \langle\nu^{1},\nu^{2}\rangle_{W,r}
\end{align*}
\end{lemma}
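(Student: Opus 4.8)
The plan is to prove weak-$\ast$ continuity of the bilinear form $\langle\cdot,\cdot\rangle_{W,r}$ by first establishing the diagonal case (the norm) and then deducing the bilinear case via polarization. The key observation is that, for fixed $r>0$, the quantity $\langle\mu_1,\mu_2\rangle_{W,r}$ is an integral over the compact manifold $W$ of the product of the two functions $z\mapsto\mu_i(B(z,r))$, so the whole problem reduces to controlling how $\mu(B(z,r))$ behaves under weak-$\ast$ convergence.

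First I would fix $r>0$ and, for a finite measure $\mu$ on $W$, set $g_\mu(z)=\mu(B(z,r))$. The main point is that if $\nu_n\overset{*}{\rightarrow}\nu$ then $g_{\nu_n}(z)\to g_\nu(z)$ for $m_W$-almost every $z$. Indeed, for each fixed $z$ the open ball $B(z,r)$ and the closed ball $\overline{B(z,r)}$ have the same $\nu$-measure precisely when the sphere $S(z,r)$ is $\nu$-null; the set of radii $r$ for which some positive-measure family of spheres $S(z,r)$ carries mass is at most countable, but more simply, for fixed $r$, the function $z\mapsto\nu(S(z,r))$ is integrable and hence finite (in fact zero) for $m_W$-a.e. $z$, since two concentric spheres of the same radius are disjoint — actually the cleanest route is: for $m_W$-a.e. $z$ the boundary $\partial B(z,r)$ is a $\nu$-continuity set, because $\int_W \nu(\partial B(z,r))\,dm_W(z)=\int_W m_W(\{z: w\in\partial B(z,r)\})\,d\nu(w)=0$ as each set $\{z: d(z,w)=r\}$ has zero $m_W$-measure. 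By the portmanteau theorem, $\nu_n(B(z,r))\to\nu(B(z,r))$ at every such $z$. Since also $g_{\nu_n}(z)\le\nu_n(W)$, which is bounded (weak-$\ast$ convergent sequences of finite measures have bounded total mass), and $m_W(W)<\infty$, the dominated convergence theorem gives
\begin{align*}
\|\nu_n^1\|_{W,r}^2=\frac{1}{r^4}\int_W g_{\nu_n^1}(z)^2\,dm_W(z)\longrightarrow\frac{1}{r^4}\int_W g_{\nu^1}(z)^2\,dm_W(z)=\|\nu^1\|_{W,r}^2,
\end{align*}
and taking square roots yields the first claim. For the bilinear statement, the same almost-everywhere convergence $g_{\nu_n^1}\to g_{\nu^1}$ and $g_{\nu_n^2}\to g_{\nu^2}$ together with the uniform bound $|g_{\nu_n^1}(z)g_{\nu_n^2}(z)|\le\sup_n\nu_n^1(W)\cdot\sup_n\nu_n^2(W)$ and dominated convergence give $\langle\nu_n^1,\nu_n^2\rangle_{W,r}\to\langle\nu^1,\nu^2\rangle_{W,r}$ directly; alternatively one invokes the polarization identity $\langle\mu_1,\mu_2\rangle_{W,r}=\tfrac14(\|\mu_1+\mu_2\|_{W,r}^2-\|\mu_1-\mu_2\|_{W,r}^2)$ and the already-proved diagonal case.

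The main obstacle — and really the only subtle point — is the measure-theoretic fact that $\partial B(z,r)$ is a $\nu$-continuity set for $m_W$-almost every center $z$; everything else is a routine application of portmanteau plus dominated convergence. This is handled by the Fubini argument above, using that for a fixed point $w$ the locus of centers $z$ with $d(z,w)=r$ is a codimension-one subset of $W$ and hence $m_W$-negligible. One should also note at the outset that a weak-$\ast$ convergent sequence of finite Borel measures on a compact space has uniformly bounded masses (apply the definition to $\phi\equiv1$), which supplies the domination needed in both limits.
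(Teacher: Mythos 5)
Your proposal is correct and follows essentially the same route as the paper: the Fubini/symmetry argument showing that $\partial B^{cu}(z,r)$ is a $\nu$-null set for $m^{cu}$-a.e.\ $z$, then portmanteau to get pointwise convergence of $z\mapsto\nu_n(B(z,r))$, and dominated convergence (with the uniform mass bound) to pass to the limit in the integral, both for the norm and the bilinear form. The only difference is that you spell out the domination and the bounded-mass remark slightly more explicitly than the paper does.
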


\begin{proof}
Note that $\nu^{1}(\partial B^{cu}(z,r))=0$ for $m^{cu}$-ae z, because:
\begin{align*}
{\int}\nu^{1}(\partial B^{cu}(z,r) \d m^{cu}(z) &= \underset{d(z,w)=r}{\int\int}\d\nu^{1}(w) dm^{cu}(z)\\
&= \int m^{cu}(\partial B^{cu}(w,r)) \d\nu^{1}(w)\\
&=0
\end{align*}

Taking $J_{r}\nu (x)=\frac{\nu(B(x,r))}{r^{2}}$, then $J_{r}\nu^{1}_{n}\rightarrow J_{r}\nu^{1}_{\infty}$ and since $||J_{r}\nu^{1}_{n}||_{L^{2}}$ is uniformly bounded, the lemma follows by the theorem of dominated convergence.

The proof that $\lim_{n\rightarrow\infty}\langle\nu_{n}^{1},\nu_{n}^{2}\rangle_{W,r}= \langle\nu_{\infty}^{1},\nu_{\infty}^{2}\rangle_{W,r}$ is analogous.
\end{proof}

Let us now consider a norm measuring the total mass of a measure.

\begin{defi}
For finite measures defined in the manifold $M$, we will consider the \tb{norm} $|\cdot|$ as: $|\mu|=\mu(M)$
\end{defi}

\begin{defi}
Given a finite family of applications $\pi_{i}:C_{i}\subset M \rightarrow \tilde{W}_{i}\supset W_{i}$, $i=1,\cdots, s_0$ and a measure $\mu$ in $M$, we consider the families $\mathcal{W}=\{ W_1, \cdots, W_{s_{0}} \}$ and $\tilde{\mathcal{W}}=\{ \tilde{W}_1, \cdots, \tilde{W}_{s_{0}} \}$ and define
\begin{align*}
|||\mu|||_{\mathcal{W},r}:=\underset{i=1,\cdots, s_0}{\max} \{|| (\pi_{i})_{*}\mu||_{W_{i},r}\}\\
|||\mu|||_{\tilde{\mathcal{W}},r}:=\underset{i=1,\cdots, s_0}{\max} \{|| (\pi_{i})_{*}\mu||_{\tilde{W}_{i},r}\}
\end{align*}
\end{defi}

\begin{remark}
Given a finite family of boxes $\{ (C_{i},W_{i},\tilde{W}_{i},\pi_{i}) \}$, $i=1,\cdots, s_{0}$, with the property that $\{\pi_{i}^{-1}(W_{i})\}$ covers $\Lambda$, for measures $\mu$ defined in $\Lambda$ we have the following equivalence for $|||\mu|||_{\{ W_{i} \},r}$ and $|||\mu|||_{\{ \tilde{W}_{i} \},r}$:
\begin{align*}
|||\mu|||_{\{ \mathcal{W} \},r} \leq |||\mu|||_{\{ \mathcal{\tilde{W}} \},r} \leq s_{0} |||\mu|||_{\{ \mathcal{W} \},r}
\end{align*}
\end{remark}

When the boxes and the corresponding center-unstable manifolds are implicit, we will simply denote the norms by $||\cdot ||_{r}$ and $|||\cdot |||_{r}$. 

Given a center-unstable manifold $W$, we denote by $m^{cu}_{W}$ the Lebesgue measure in this submanifold given by the Riemannian metric. When the set $W$ is implicit we will denote it by $m^{cu}$. Notice that we are considering dynamically coherent attractors with $\dim (E^{c}\oplus E^{uu})=2$. For absolute continuous measures defined in a center-unstable surface, the norm $||\cdot ||_{r}$ is related to the $L^{2}$ norm of the density, this relation can be seen in the following criteria of absolute continuity for measures defined in a center-unstable manifold.

\begin{lemma}\label{lemma.abs.cont}
Given a center-unstable manifold $W$, there exists a constant $I>0$ such that every finite  Borel measure $\nu$ defined in $W$ satisfying
\begin{align*}
\underset{r\rightarrow 0^{+}}{\liminf } { ||\nu||_{W,r}} \leq L,
\end{align*}
for some $L>0$, is absolutely continuous with respect to $m^{cu}$ and
\begin{align*}
\left|\left|\frac{d\nu}{d m^{cu}}\right|\right|_{L^{2}(W,m^{cu})}\leq IL.
\end{align*}
\end{lemma}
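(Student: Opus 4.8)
The plan is to interpret $\|\cdot\|_{W,r}$ as an $L^{2}$-norm of a family of averages of $\nu$, extract the density of $\nu$ as a weak-$L^{2}$ limit of such averages, and control its $L^{2}$-norm by weak lower semicontinuity. Following the notation used in the proof of Lemma 3.3, for a finite Borel measure $\nu$ on $W$ and $r>0$ set $J_{r}\nu(z):=\nu(B^{cu}(z,r))/r^{2}$; then the definition of the bilinear form gives at once $\|\nu\|_{W,r}^{2}=\int_{W}(J_{r}\nu)^{2}\,dm^{cu}=\|J_{r}\nu\|_{L^{2}(W,m^{cu})}^{2}$. The hypothesis provides radii $r_{n}\downarrow 0$ with $\|J_{r_{n}}\nu\|_{L^{2}}=\|\nu\|_{W,r_{n}}\to\liminf_{r\to 0^{+}}\|\nu\|_{W,r}\le L$, so $(J_{r_{n}}\nu)_{n}$ is bounded in the separable Hilbert space $L^{2}(W,m^{cu})$ and some subsequence $(J_{r_{n_{k}}}\nu)_{k}$ converges weakly to a function $g\in L^{2}(W,m^{cu})$ with $\|g\|_{L^{2}}\le\liminf_{k}\|J_{r_{n_{k}}}\nu\|_{L^{2}}\le L$.

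Next I would identify $g$ with a multiple of the density of $\nu$. Fix a continuous $\phi$ with compact support in the interior of $W$. Writing $\nu(B^{cu}(z,r))=\int_{W}\mathbbm{1}_{\{d(z,w)<r\}}\,d\nu(w)$ and applying Fubini (legitimate because all the measures involved are finite on the relevant sets),
\begin{align*}
\int_{W}\phi\cdot J_{r_{n}}\nu\,dm^{cu}=\int_{W}\Big(\frac{1}{r_{n}^{2}}\int_{B^{cu}(w,r_{n})}\phi\,dm^{cu}\Big)\,d\nu(w).
\end{align*}
As $r\to 0$ the inner quantity converges, uniformly for $w$ in the support of $\phi$, to $\sigma(w)\phi(w)$, where $\sigma(w):=\lim_{r\to0^{+}}m^{cu}(B^{cu}(w,r))/r^{2}$ is a continuous function bounded away from $0$ and $\infty$ on $W$ (this limit exists and is continuous because $W$ is a $C^{1}$ surface), and it stays bounded. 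Since $\nu$ is finite, dominated convergence yields $\int_{W}\phi\cdot J_{r_{n}}\nu\,dm^{cu}\to\int_{W}\sigma\phi\,d\nu$, while weak convergence gives $\int_{W}\phi\cdot J_{r_{n_{k}}}\nu\,dm^{cu}\to\int_{W}\phi\,g\,dm^{cu}$. Hence $\int_{W}\phi\,g\,dm^{cu}=\int_{W}\sigma\phi\,d\nu$ for every such $\phi$.

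This identity forces $\nu=\sigma^{-1}g\,m^{cu}$ on the interior of $W$. If $W$ has a boundary, $\partial W$ is $m^{cu}$-null and carries no $\nu$-mass: a finite mass $a$ concentrated on the rectifiable curve $\partial W$ would make $\|\nu\|_{W,r}^{2}$ grow like a constant times $a^{2}/r$, contradicting the $\liminf$ bound. Therefore $\nu\ll m^{cu}$ on all of $W$ with $d\nu/dm^{cu}=\sigma^{-1}g$, and
\begin{align*}
\Big\|\frac{d\nu}{dm^{cu}}\Big\|_{L^{2}(W,m^{cu})}\le\big(\inf_{W}\sigma\big)^{-1}\|g\|_{L^{2}}\le\big(\inf_{W}\sigma\big)^{-1}L,
\end{align*}
which is the assertion with $I:=(\inf_{W}\sigma)^{-1}$, a constant depending only on $W$.

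The main obstacle is the identification step: turning the abstract weak-$L^{2}$ limit $g$ into an actual density. Concretely this amounts to the uniform-on-compacts convergence $r^{-2}\int_{B^{cu}(w,r)}\phi\,dm^{cu}\to\sigma(w)\phi(w)$ (continuity of $\phi$ together with the Euclidean first-order behaviour of the area of small disks on the surface $W$) and the use of finiteness of $\nu$ to pass this limit under $\int d\nu$, plus the minor bookkeeping at $\partial W$. The remaining ingredients — rewriting $\|\cdot\|_{W,r}$ as an $L^{2}$-norm, extracting a weakly convergent subsequence in a Hilbert space, and weak lower semicontinuity of the norm — are routine.
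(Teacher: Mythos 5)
Your argument is essentially the paper's own proof: rewrite $\|\nu\|_{W,r}$ as the $L^{2}$-norm of the averaged densities $J_{r}\nu$, extract a weak $L^{2}$ limit along a sequence realizing the $\liminf$, identify that limit with the density of $\nu$ via testing against continuous functions, and conclude by weak lower semicontinuity. The only differences are cosmetic — you normalize by $r^{2}$ (hence the factor $\sigma$, which the paper absorbs into the constants $C_{1},C_{2}$ bounding $m^{cu}(B(x,r))/r^{2}$) and you add an explicit check that $\partial W$ carries no $\nu$-mass — so the proposal is correct and matches the paper's approach.
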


\begin{proof}
First we observe that there exist constants $C_{1}, C_{2} > 0$ such that $C_{1}\leq \frac{m^{cu}(B(x,r))}{r^{2}} \leq C_{2}$, which is valid because the center-unstable manifolds form a lamination and the areas of balls with small radius depends only on the first derivative of the restriction of these charts to the horizontal planes.

Define $J_{r_{n}}\nu(x)=\frac{\nu(B^{cu}_{W}(x,r_{n}))}{m^{cu}(B^{cu}(x,r_{n}))}$. By hypothesis, we can consider a sequence $r_{n}\rightarrow 0^{+}$ such that $||J_{r_{n}}\nu||_{L^{2}(W,m^{cu})}$ is uniformly bounded by $C_{1}^{-1}L$. Taking a subsequence we suppose that $J_{r_{n}}\nu\rightarrow\psi \in L^{2}$ weakly, so
\begin{align*}
\langle f,\psi \rangle_{L^{2}} = \underset{n\rightarrow\infty}{\lim}\underset{W}{\int} f\cdot J_{r_{n}}\nu\text{ } \d m^{cu} = \underset{W}{\int} f \d\nu
\end{align*}
for every continuous function $f$. This implies that $\nu=\psi m^{cu}_{W}$, and that for $I=C_{1}^{-1}$:
\begin{align*}
\left|\left|\frac{d\nu}{dm^{cu}}\right|\right|_{L^{2}} = \lim_{n\rightarrow \infty} ||J_{r_{n}}\nu||_{L^{2}} \leq IL
\end{align*}

\end{proof}

\section{The Main Inequality}\label{s.mainineq}

This Section is dedicated to prove the Main Inequality of this work (Proposition 4.1 below), which will be used to estimate the norm $||(\pi_{i})_{*}\mu||_{W_{i},r}$ for small parameters $r$.

Given $f$ and $\Lambda$ satisfying the assumptions of Theorem A, consider a finite family of boxes $\{(C_{i},W_{i},\tilde{W}_{i},\pi_{i})\}$, $i=1,\cdots,s_0$, such that $\Lambda\subset\underset{i}{\cup} \pi_{i}^{-1}(W_{i})$ and the $d^{ss}$-diameter of each box is smaller than the $\epsilon_{0}$ given in the Transversality Condition (H1), $r_{0}$ small such that $d^{cu}(\pi_{i}(C_{i}),\partial \tilde{W}_{i})>r_{0}$ for every $i$, fixed constants $a_{1}$ and $a_{2}$ such that for every unstable curve $\gamma^{uu}$ crossing $C_{i}$ we have $a_{1} \leq l(\gamma^{uu}) \leq a_{2}$ and an upper bound $L\geq 1$  for the Lipschitz constant of every $\pi_{i}$. In what follows, we will suppose that this family is fixed once for all. 

The Main Inequality of this work is the following.

\begin{prop}[Main Inequality]
There exist constants $B>0$ and $\sigma>1$ such that for every $n\in\mathbb{N}$, there exist $D_{n}>0$, $r_{n}>0$ and $c_{n}>1$ such that for every ergodic u-Gibbs $\mu$ and every $r\leq r_{n}$, we have:
\begin{align}
\nonumber \left|\left|\left| f^{n}_{*}\mu \right|\right|\right|^{2}_{r} \leq \frac{B}{\sigma^{n}} \left|\left|\left|\mu \right|\right|\right|^{2}_{c_{n}r} + D_{n} \left|\mu\right|^{2} 
\end{align}
\end{prop}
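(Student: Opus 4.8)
Let me think about how one would attack this Lasota-Yorke type inequality.

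The setup: We have u-Gibbs states, whose disintegrations along unstable leaves are absolutely continuous with bounded densities. We want to show that after projecting via stable holonomies into center-unstable leaves and measuring with the $\|\cdot\|_{X,r}$ norm, applying $f^n$ gives a contraction (factor $B/\sigma^n$) of the "fine-scale" part plus an error controlled by total mass.

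Key mechanism: The norm $\|\mu\|_{X,r}^2 = r^{-4}\int_X \mu(B(z,r))^2 dm^{cu}(z)$ is essentially the $L^2$ norm of $J_r\mu$. To estimate $\|(\pi_i)_* f^n_* \mu\|_{W_i,r}^2$, we write $f^n_*\mu$ using the u-Gibbs structure: decompose $\mu$ over unstable plaques, push forward by $f^n$ (which stretches unstable curves and distorts densities by bounded-distortion estimates along the unstable direction), then project by $\pi_i$ into $W_i$. The preimage $f^{-n}(C_i)$ meets $\Lambda$ in a union of long unstable curves; $f^n$ maps short unstable curves near $\Lambda$ to long ones crossing the boxes $C_i$.

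So let me think about what goes into $\mu(B(z,r))$ for $z \in W_i$. The mass $(\pi_i)_*f^n_*\mu(B(z,r))$ comes from the set of points $w$ with $f^n(w) \in C_i$ and $\pi_i(f^n(w)) \in B(z,r)$; using the u-Gibbs disintegration of $\mu$, this is an integral over unstable plaques $\gamma$ of the transverse measure, of the $\mu_\gamma$-mass of $\{x \in \gamma : \pi_i(f^n(x)) \in B(z,r)\}$. After stretching, $f^n(\gamma)$ is a (nearly) unstable curve crossing $C_i$, and $\pi_i$ projects it to a curve $\eta = \pi_i(f^n(\gamma))$ in $W_i$; the density of $(\pi_i f^n)_*\mu_\gamma$ along $\eta$ is controlled (bounded above) because the unstable expansion beats the derivative and $\pi_i$ is Lipschitz. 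The issue is that each such projected measure lives on a curve — a one-dimensional set inside a surface — so its $\|\cdot\|_{W_i,r}$ norm individually blows up like $r^{-1}$. The whole point of condition (H1) is that, summing over the family of plaques $\gamma$, the projected curves $\eta_\gamma$ come in many transverse directions, so the superposition spreads mass over a genuinely two-dimensional region.

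**Main steps, in order:**

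First I would expand $\|(\pi_i)_*f^n_*\mu\|_{W_i,r}^2 = r^{-4}\int \big((\pi_i)_*f^n_*\mu(B(z,r))\big)^2 dm^{cu}(z)$ and, using the u-Gibbs disintegration $\mu = \int \mu_\gamma \, d\hat\mu(\gamma)$ together with invariance, rewrite the squared mass as a double integral over pairs of unstable plaques $(\gamma_1,\gamma_2)$ of a quantity like $\mu_{\gamma_1}\otimes\mu_{\gamma_2}$ applied to the set of pairs $(x_1,x_2)$ with $\pi_i f^n(x_1), \pi_i f^n(x_2)$ both in a common ball of radius $r$. Equivalently, by Fubini, $\|(\pi_i)_*f^n_*\mu\|_{W_i,r}^2$ is a double integral $\iint \langle (\pi_i f^n)_*\mu_{\gamma_1}, (\pi_i f^n)_*\mu_{\gamma_2}\rangle_{W_i,r}\, d\hat\mu(\gamma_1)d\hat\mu(\gamma_2)$, so it suffices to bound the pairing of two projected plaque-measures.

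Second, I would fix two unstable plaques $\gamma_1,\gamma_2$ (inside $f^{-n}C_i$, so $f^n\gamma_j$ crosses $C_i$) and estimate $\langle \eta_1, \eta_2\rangle_{W_i,r}$ where $\eta_j = (\pi_i f^n)_*\mu_{\gamma_j}$ is a measure on the curve $\pi_i(f^n\gamma_j) \subset W_i$ with density bounded by some constant times $(\text{length of }\gamma_j)^{-1}$ relative to arclength (bounded distortion along unstable direction, uniform Lipschitz bounds on $\pi_i$, plus the fact that $f^n$ expands $\gamma_j$ at rate $\geq C^{-1}(\lambda_{uu}^-)^n$ so after stretching to bounded length the original is exponentially short — this is where the exponential gain lives). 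The elementary geometric estimate is: if two $C^1$ curves are $\theta$-transversal in balls of radius $\rho$ and carry measures with densities $\leq a_j$ w.r.t. arclength, then $\langle\eta_1,\eta_2\rangle_{X,r} \lesssim \frac{a_1 a_2}{\sin\theta}$ for $r \leq \rho$ (the overlap of an $r$-tube around one curve with an $r$-tube around the other, integrated, is $O(r^3/\sin\theta)$, dividing by $r^4$ gives $O(r^{-1}/\sin\theta)$, but then integrated against the arclength of one of the curves inside a fixed box gives the bounded contribution) — I would prove this in the "estimates" section (Section 4 of the paper) and quote it here.

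Third, I must handle the case $d^{ss}(\gamma_1,\gamma_2) < \epsilon$, where (H1) gives no transversality: these "close pairs" are handled separately. When the two plaques are stably close, their projections into $W_i$ are $C^1$-close curves, so $\langle\eta_1,\eta_2\rangle$ is comparable to $\langle\eta_1,\eta_1\rangle$, which is $\lesssim r^{-1}\cdot(\text{bounded})$ — this term, after dividing and multiplying appropriately, is absorbed into the $|||\mu|||^2_{c_n r}$ term on the right: choosing $r_n$ small and $c_n$ large relative to $\epsilon$, the contribution of stably-close pairs is bounded by $(\text{const})\cdot|||\mu|||^2_{c_n r}$, while the transversal pairs contribute $\leq (\text{const}/\sin\theta(\epsilon))\cdot|\mu|^2$ after the bounded-distortion estimates, and the exponential expansion of the unstable direction provides the factor $1/\sigma^n$ multiplying the first term — more precisely the densities $a_j$ carry a factor that decays like $\lambda_{uu}^{-n}$, and hypothesis (H2) ($\lambda_c^+/((\lambda_c^-)^2\lambda_{uu}^-)<1$, i.e. center-unstable volume expansion dominates) is what ensures the Jacobian factors from $f^n$ on the two-dimensional center-unstable leaves combine with the arclength factors to still give a net contraction $\sigma^{-n}$ with $\sigma>1$.

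**The main obstacle.** The genuinely hard part is organizing the bookkeeping so that the single exponential gain from unstable expansion is *not eaten* by the growth in the number of plaques $\gamma$ crossing $C_i$ in $f^{-n}$ and by the distortion of the stable holonomy along long pieces of orbit. One needs: (i) a clean bounded-distortion statement for $(\pi_i \circ f^n)|_{\gamma}$ giving density bounds uniform in $n$ up to the exponential factor $\lambda_{uu}^{-n}$; (ii) the observation that the "transverse mass" $\hat\mu$ of the plaques is itself $f^n_*$-controlled so the double integral doesn't accumulate; and (iii) choosing the scale hierarchy $r_n \ll c_n^{-1} \ll \epsilon \ll \epsilon_0$ and the function $\theta(\epsilon)$ from (H1) so that the close-pair term really is a contraction. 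Getting (iii) to interlock with (i)–(ii) — in particular that $D_n$ is allowed to depend on $n$ (blowing up as $\theta(\epsilon_n)\to 0$) while $B$ and $\sigma$ do not — is the delicate point, and is exactly why the statement is phrased with $n$-dependent $r_n, c_n, D_n$ but $n$-independent $B,\sigma$.
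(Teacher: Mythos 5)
Your overall architecture coincides with the paper's: expand the square of the projected push-forward into pairwise terms coming from measures supported on unstable curves, control the separated pairs by (H1) together with a geometric lemma on transversal curves (this gives the $D_n|\mu|^2$ term and is essentially the paper's Lemmas 4.4--4.5), and control the close pairs by unstable expansion plus (H2) against the coarser scale $c_n r$ (the contraction term). The paper organizes the pairs by the finitely many connected components $R_{i,j,m}$ of $f^{n}(C_i)\cap C_j$, so that ``close'' means ``same branch'' and distinct branches are automatically $d^{ss}$- or $d^{cu}$-separated, and it replaces your disintegration by an approximation of $\mu_{|_{C_i}}$ by finite sums $\sum\rho_j m_{\gamma_j}$ (Lemma 4.1) together with weak-$*$ continuity of the pairing (Lemma 3.3); these are organizational differences only.

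The genuine gap is in your treatment of the close pairs, which is exactly where the content of the inequality lies. The mechanism you propose --- bound each close pair by $\langle\eta_1,\eta_2\rangle\leq\|\eta_1\|\,\|\eta_2\\|\lesssim r^{-1}$ and then, ``after dividing and multiplying appropriately,'' absorb the total into $\frac{B}{\sigma^{n}}|||\mu|||^{2}_{c_n r}$ --- does not work as stated: once the localization in the base point $z$ is discarded, the aggregate over stably close pairs is of order $r^{-1}$ times a pair-mass with no exponential smallness in $n$, while $|||\mu|||^{2}_{c_n r}$ may be of order $1$, so no choice of $c_n$ ``large relative to $\epsilon$'' can produce the factor $\sigma^{-n}$. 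What is needed (and is what the paper proves as Lemma 4.2, Claim 4.1 and Lemma 4.3) is a two-scale cylinder comparison: for each inverse branch $f_m$ of $f^{n}$ and $\delta=c_n r$, with $c_n\sim(\lambda_c^{-})^{-n}$ dictated by the backward expansion of the central direction (not by $\epsilon$), the u-Gibbs density along unstable curves gives $\mu\big(f_m^{-1}\pi_j^{-1}B^{cu}(\pi_j f^{n}(x),r)\big)\leq (r/\delta)^{2}\,B_1\,\sigma_{n,r}(x)^{-1}\,\mu\big(\pi_i^{-1}B^{cu}(x,\delta)\big)$; squaring, integrating over $W_j$, and changing variables by $\pi_i f_m^{-1}\pi_j^{-1}$, whose Jacobian is bounded by $L^{2}\sup|\det Df^{n}_{|_{E^{cu}}}|$, the factor $(r/\delta)^{4}$ converts the normalization $r^{-4}$ into $\delta^{-4}$ and the remaining quotient is exactly $\big[\lambda_c^{+}/((\lambda_c^{-})^{2}\lambda_{uu}^{-})\big]^{n}$, i.e.\ condition (H2). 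You name the right ingredients (density decay $\lambda_{uu}^{-n}$, cu-Jacobian, (H2)), but without this per-branch, two-scale mass comparison the contraction term is not obtained, and the close-pair step as written would fail.
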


This type of inequality is often used in the study of the regularity of invariant measures in ergodic theory. There are two norms, one measuring the size and other measuring the regularity of the measure. These inequalities allow us to estimate the regularity of the fixed points. This kind of inequality is due to Doeblin-Fortet, and it is also known as ``Lasota-Yorke type inequality''.

The proof of Proposition 4.1 is the main objective of this Section.

\subsection{Approximating u-Gibbs inside the Boxes}

To apply our arguments, it will be useful to approximate the restriction of a u-Gibbs to one box by a finite combination of measures supported on unstable curves that crosses the box.

\begin{lemma}
There exist $C_{1}>0$ and $\alpha_{1}\in(0,1)$ such that the restriction of every ergodic u-Gibbs to a box $C_{i}$ is equals to the limit $\displaystyle \lim_{n\rightarrow\infty} \mu_{n}$ with $\mu_{n}={\sum_{j=0}^{r_{0}}{\rho_{j}^{n}m_{\gamma_{j}^{n}}}}$, where $\log\rho_{j}^{n}$ is $(C_{1},\alpha_{1})$-Holder and $\gamma_{j}^{n}$ is an unstable curve contained in $C_{i}$ that crosses $C_{i}$.
\end{lemma}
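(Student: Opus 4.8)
The plan is to exploit the local product structure of the box together with the Rokhlin disintegration of a u-Gibbs along unstable plaques. Recall that, via the embedding $h:I^{uu}\times I^{a}\times I^{b}\to M$ defining $C_{i}$, the unstable curves crossing $C_{i}$ are exactly the plaques $\gamma_{y,z}(t)=h(t,y,z)$, $t\in I^{uu}$, with $(y,z)$ in the transversal $T_{i}:=\{(y,z)\in I^{a}\times I^{b}: \gamma_{y,z}\cap\Lambda\neq\emptyset\}$; by condition (1) of the definition of box, $C_{i}\cap\Lambda$ is foliated by these plaques. Given an ergodic u-Gibbs $\mu$, let $\hat\mu$ be the measure on $T_{i}$ obtained by pushing $\mu|_{C_{i}}$ forward along plaques, and let $\{\mu_{(y,z)}\}_{(y,z)\in T_{i}}$ be the corresponding family of conditional probabilities. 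Since $\mathrm{supp}\,\mu\subset\Lambda$ and the partition into plaques is measurable, $\mu|_{C_{i}}=\int_{T_{i}}\mu_{(y,z)}\,d\hat\mu(y,z)$, and by the definition of u-Gibbs, for $\hat\mu$-a.e.\ $(y,z)$ one has $\mu_{(y,z)}=\rho_{y,z}\,m_{\gamma_{y,z}}$ for a positive density $\rho_{y,z}$.

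The first key step is to control the densities. As recalled in the properties of u-Gibbs above (see also \cite{BDV}, Section 11), for points $p,q$ on a common unstable plaque one has the absolutely convergent series
\begin{align*}
\log\frac{\rho_{y,z}(p)}{\rho_{y,z}(q)}=\sum_{k\geq 1}\log\frac{\Jac\big(Df|_{E^{uu}}\big)(f^{-k}q)}{\Jac\big(Df|_{E^{uu}}\big)(f^{-k}p)}.
\end{align*}
Since $f$ is $C^{r}$ with $r\geq2$ and $E^{uu}$ is Hölder, the map $x\mapsto\Jac(Df|_{E^{uu}})(x)$ is Hölder, and the uniform contraction of $f^{-k}$ along unstable leaves makes the tail decay geometrically; hence $\log\rho_{y,z}$ is $(C_{1},\alpha_{1})$-Hölder along $\gamma_{y,z}$ with $C_{1},\alpha_{1}$ depending only on $f$ and $\Lambda$ (here we use that the conditionals are probabilities and the plaques have length in $[a_{1},a_{2}]$, so $\log\rho_{y,z}$ is also uniformly bounded). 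The same series representation, now with the plaque itself varying, yields that the map $(y,z)\mapsto\rho_{y,z}\,m_{\gamma_{y,z}}$ is continuous from $T_{i}$ into the space of finite measures with the weak-$\ast$ topology: the plaques $\gamma_{y,z}$ vary continuously in the $C^{1}$-topology because they form a continuous lamination and $h$ is an embedding, while each term of the series depends continuously on the base point, uniformly in $k$ by the geometric bound.

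The second step is an atomic approximation of the transverse measure. For each $n$, partition $T_{i}$ into finitely many Borel sets $P^{n}_{1},\dots,P^{n}_{N_{n}}$ of diameter less than $1/n$, pick $(y^{n}_{j},z^{n}_{j})\in P^{n}_{j}$ with $\gamma_{y^{n}_{j},z^{n}_{j}}\cap\Lambda\neq\emptyset$, and set $\gamma^{n}_{j}:=\gamma_{y^{n}_{j},z^{n}_{j}}$ and $\rho^{n}_{j}:=\hat\mu(P^{n}_{j})\,\rho_{y^{n}_{j},z^{n}_{j}}$. Then $\log\rho^{n}_{j}$ is still $(C_{1},\alpha_{1})$-Hölder, since the additive constant $\log\hat\mu(P^{n}_{j})$ does not change the Hölder seminorm, and each $\gamma^{n}_{j}$ is an unstable curve crossing $C_{i}$ by condition (2) in the definition of box. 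Putting $\mu_{n}:=\sum_{j=1}^{N_{n}}\rho^{n}_{j}\,m_{\gamma^{n}_{j}}$, for every continuous $\phi:M\to\R$ the quantity $\int\phi\,d\mu_{n}$ is a Riemann-type sum for $\int_{T_{i}}\big(\int\phi\,\rho_{y,z}\,dm_{\gamma_{y,z}}\big)\,d\hat\mu(y,z)$, whose integrand is continuous and bounded by the first step; hence $\int\phi\,d\mu_{n}\to\int\phi\,d\mu|_{C_{i}}$, i.e.\ $\mu_{n}\overset{*}{\rightarrow}\mu|_{C_{i}}$. (Ergodicity of $\mu$ plays no role here and could be dropped.)

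The main obstacle is the first step: obtaining simultaneously the uniform Hölder control of $\log\rho_{y,z}$ with constants independent of the plaque and of the particular u-Gibbs $\mu$, and the weak-$\ast$ continuity of $(y,z)\mapsto\rho_{y,z}\,m_{\gamma_{y,z}}$. Both facts rest on the bounded-distortion series for u-Gibbs densities combined with uniform geometric tail estimates, and on the continuity of the unstable lamination inside the box; these ingredients are classical, but here they must be assembled with explicit uniformity so that the constants $C_{1},\alpha_{1}$ can be fixed once and for all, as required for the later use of this lemma in the proof of the Main Inequality.
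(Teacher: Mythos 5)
Your argument is essentially correct, but it takes a genuinely different route from the paper. The paper exploits ergodicity: it writes the ergodic u-Gibbs $\mu$ as the limit of the Ces\`aro averages $\frac{1}{n}\sum_{j=0}^{n-1}f^{j}_{*}(m_{\gamma^{uu}})$ of Lebesgue measure on a single unstable curve, restricts these averages to the box, discards the (at most $2n$, asymptotically negligible) connected components of $f^{j}(\gamma^{uu})\cap C_{i}$ that do not cross the box, and gets the uniform H\"older control from the invariance of the class $\Lcal_{C_{1},\alpha_{1}}$ under push-forward, a self-contained bounded-distortion statement. You instead disintegrate $\mu|_{C_{i}}$ directly along the plaques of the box, invoke the explicit backward-Jacobian formula for the conditional densities of a u-Gibbs (Pesin--Sinai; \cite{BDV}, Section 11) to obtain $(C_{1},\alpha_{1})$ depending only on $f$ and $\Lambda$, and then approximate the transverse measure $\hat\mu$ by finite atomic measures via a Riemann-sum argument; this requires the transverse weak-$\ast$ continuity of $(y,z)\mapsto\rho_{y,z}\,m_{\gamma_{y,z}}$, which does hold, but for the reason you should make explicit: the distortion series only ever compares points on the \emph{same} plaque, whose backward orbits contract toward each other at a uniform geometric rate, so each finite truncation is continuous in $(y,z)$ while the tail is uniformly small (the naive objection is that backward orbits of nearby points on \emph{different} unstable leaves separate, and your phrase ``each term depends continuously on the base point, uniformly in $k$'' glosses over why that is harmless). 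In the trade-off, your route dispenses with ergodicity (as you note) and yields approximants supported exactly on plaques of the box with constants visibly uniform over all u-Gibbs, at the cost of importing the density formula as an external (though standard and citable) input; the paper's route is more elementary and self-contained but uses ergodicity and the bookkeeping of the discarded non-crossing pieces. Two small points to tighten: keep only those partition elements $P^{n}_{j}$ meeting the compact transversal $T_{i}$ (the rest carry no $\hat\mu$-mass), and record that the normalizing constants of the conditionals are uniformly bounded away from $0$ and $\infty$ because the plaque lengths lie in $[a_{1},a_{2}]$ and the distortion sum is uniformly bounded, which is what makes $\log\rho^{n}_{j}$ bounded as well as H\"older.
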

\begin{proof}
For every ergodic u-Gibbs $\mu$, there exists some unstable curve $\gamma^{uu}$ such that $\mu$ is written as a limit of $\mu_{n}:=\frac{1}{n}\sum_{j=0}^{n-1}f^{j}_{*}(m_{|_{\gamma^{uu}}})$. Given $C_{1}$ and $\alpha_{1}$, let $\Lcal_{C_{1},\alpha_{1}}$ be the closed space of measures whose logarithm of the density of the conditional measure to the unstable curves are of class $(C_{1},\alpha_{1})$-Holder, one can easily see that there exist $C_{1} , \alpha_{1}$ and $n_{0}\in \N$ such that the space $\Lcal_{C_{1},\alpha_{1}}$ is invariant under $f^{n}_{*}$ for every $n\in\N$.

Consider the measures $\nu_{n}:=\Big(\frac{1}{n}\sum_{j=0}^{n-1}f^{j}_{*}(m_{|_{\gamma^{uu}}})\Big)_{|_{C_{i}}}\rightarrow \mu_{|_{C_{i}}}$. For every $j\in\N$, let $\tilde{\gamma}^{uu}_{j}\subset f^{j}(\gamma^{uu})\cap C_{i}$ be the curve obtained removing the connected components of $f^{j}(\gamma^{uu})\cap C_{i}$ that do not cross the box $C_{i}$, and consider the measure $\tilde{\nu}_{n}:=\Big(\frac{1}{n}\sum_{j=0}^{n-1}f^{j}_{*}(m_{|_{f^{-j} \tilde{\gamma}^{uu}_{j} }})\Big)_{|_{C_{i}}}$. From the invariance of $\Lcal_{C_{1},\alpha_{1}}$, it follows that $\tilde{\nu}_{n} \in \Lcal_{C_{1},\alpha_{1}}$ for every $n\in \N$. Note that $\nu_{n}-\tilde{\nu_{n}}\overset{n\rightarrow\infty}{\rightarrow} 0$ because their difference is a measure supported in the connected components that were removed, but these connected components are at most $2n$, they have bounded length and the density of their conditional measure to these unstable curves converges to zero when $n\rightarrow\infty$. So $\displaystyle \lim_{n}\tilde{\nu}_{n}=\lim\nu_{n}=\mu_{|_{C_{i}}}$, thus the sequence $\{\tilde{\nu}_{n}\}_{n}$ satisfies the conditions that we want.
\end{proof}

\subsection{Comparing Sizes of Cylinders}

Denote by $B^{cu}(x,r)$ the center-unstable ball of radius $r$ centered at $x$. The next lemma estimates, for u-Gibbs measures, the measure of the sets $\pi_{i}^{-1}B^{cu}(x,r)$, which we will look as cylinders.

Given $n\in \N$, let $\{R_{i,j,m}\}_{m=1,\cdots,m_{0}(i,j,n)}$ be the connected components of $f^{n}(C_{i})\cap C_{j}$, the sets $C_{i,j,m}:= f^{-n}R_{i,j,m}\subset C_{i}$ and the maps $f_{i,j,m}:=f^{n}_{|_{C_{i,j,m}}}$. When the choices of $C_{i}, C_{j}$ and $n$ are implicit we will write $f_{m}$ instead of $f_{i,j,m}$.

\begin{lemma}
There exists $B_1>0$ such that for every $n$, $C_{i}$ and $C_{j}$ there exists $\tilde{r}_{n}$ such that, taking $\delta=10C^{-1}L^{2}(\lambda_{c}^{-})^{-n}r$ and $\sigma_{n,r}(x)=(\lambda_{c}^{-})^{n}\underset{w\in C(x)}{\min}{||Df^{n}_{|_{E^{uu}_{w}}}||}$ with $C(x)=f_{m}^{-1}\pi_{j}^{-1}B^{cu}(\pi_{j}f^{n}(x),r)$, we have for every ergodic u-Gibbs $\mu$, every $x\in W_{i}$, every $m$ and any $r<\tilde{r}_{n}$:
\begin{align*}
{\mu\big(f_{m}^{-1}\pi_{j}^{-1}B^{cu}(\pi_{j} f^{n}(x),r)\big)} \leq \left(\frac{r}{\delta}\right)^{2} \frac{B_{1}}{\sigma_{n,r}(x)} \cdot  {\mu\big(\pi_{i}^{-1}B^{cu}(x,\delta)\big)}
\end{align*}
\end{lemma}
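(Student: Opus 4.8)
The plan is to reduce the estimate for $\mu\big(f_m^{-1}\pi_j^{-1}B^{cu}(\pi_jf^n(x),r)\big)$ to a computation on a single unstable curve and then integrate over the family of unstable curves crossing the box, using the Gibbs property to control the transverse mass by $\mu\big(\pi_i^{-1}B^{cu}(x,\delta)\big)$. First I would fix the box $C_i$, the target box $C_j$, the iterate $n$, and the branch $f_m = f^n_{|C_{i,j,m}}$; note that $f_m$ is a diffeomorphism onto a connected component $R_{i,j,m}$ of $f^n(C_i)\cap C_j$. The set $C(x) = f_m^{-1}\pi_j^{-1}B^{cu}(\pi_jf^n(x),r)$ is a ``cylinder'' in $C_i$: its image under $f^n$ is (roughly) the $\pi_j$-preimage of a small center-unstable ball of radius $r$. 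The key geometric point is that $f^n$ contracts the stable direction and expands the unstable and (by (H2)) the center-unstable directions, so pulling back a center-unstable ball of radius $r$ produces a set whose center-unstable ``width'' is controlled by $(\lambda_c^-)^{-n}$ (hence the definition of $\delta = 10C^{-1}L^2(\lambda_c^-)^{-n}r$, with the factor $L^2$ absorbing the Lipschitz constants of the stable projections $\pi_i,\pi_j$ and $10$ giving room for the constant $C$ in the partial hyperbolicity estimates). Concretely, I would show that $C(x) \subset \pi_i^{-1}B^{cu}(x,\delta)$ once $r < \tilde r_n$ is small enough that all the estimates are valid inside the charts; here $\tilde r_n$ depends on $n$ through the derivative bounds of $f^n$.

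Next I would pass to the disintegration of the u-Gibbs $\mu$ along unstable plaques. By Proposition 2.5(1), the conditional densities of $\mu$ on unstable curves are bounded above and below by uniform constants. So for a single unstable curve $\gamma$ crossing $C_i$, the $\mu$-conditional mass of $C(x)\cap\gamma$ is comparable to the induced Lebesgue length $m_\gamma(C(x)\cap\gamma)$. Now $f_m$ expands $\gamma$ by roughly $\|Df^n_{|E^{uu}_w}\|$ for $w\in\gamma$, and $f^n(\gamma)$ meets $\pi_j^{-1}B^{cu}(\pi_jf^n(x),r)$ in a curve of length at most a uniform multiple of $r$ (it crosses the $r$-ball once, up to the Lipschitz constant of $\pi_j$), so
\begin{align*}
m_\gamma\big(C(x)\cap\gamma\big) \;\leq\; \frac{\mathrm{const}\cdot r}{\min_{w\in C(x)}\|Df^n_{|E^{uu}_w}\|} \;=\; \mathrm{const}\cdot r\cdot\frac{(\lambda_c^-)^n}{\sigma_{n,r}(x)}.
\end{align*}
Comparing this with the length $m_\gamma\big(\pi_i^{-1}B^{cu}(x,\delta)\cap\gamma\big)$, which is bounded below by a uniform multiple of $\delta$ (an unstable curve crossing the $\delta$-cylinder has length at least $\sim\delta$), gives a pointwise ratio $\leq \mathrm{const}\cdot \frac{r}{\delta}\cdot\frac{(\lambda_c^-)^n}{\sigma_{n,r}(x)}$ on each unstable curve. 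Wait --- I need to be careful to get the correct power: the factor $(r/\delta)^2$ in the statement arises because one power of $r/\delta$ comes from the unstable (length) direction as above, and the second power comes from the transverse (center) direction, since $C(x)$ occupies a fraction $\sim r/\delta$ of the center-direction width of $\pi_i^{-1}B^{cu}(x,\delta)$ as well. So I would organize the estimate as a product of a transverse estimate and a fibrewise estimate.

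Finally I would integrate over the transversal. Using a foliated chart adapted to $C_i$ (Definition 2.8 / Proposition 2.9), write $\mu_{|C_i}$ via Fubini as an integral over the transversal $\Sigma$ of the conditional measures on unstable curves, against a transverse measure $\hat\mu$. On the transversal, $C(x)$ projects into a set of $\hat\mu$-measure at most $\sim (r/\delta)\cdot \hat\mu(\text{projection of }\pi_i^{-1}B^{cu}(x,\delta))$ --- this is the transverse half of the $(r/\delta)^2$ factor. Combining with the fibrewise bound above and collecting all uniform constants into a single $B_1$ (which depends only on the bounds in Proposition 2.5(1), the chart constants $K_1$, the Lipschitz bound $L$, and $C$ from partial hyperbolicity, but not on $n$, $i$, $j$, $x$, $m$ or $r$) yields
\begin{align*}
\mu\big(f_m^{-1}\pi_j^{-1}B^{cu}(\pi_jf^n(x),r)\big) \;\leq\; \Big(\frac{r}{\delta}\Big)^2\frac{B_1}{\sigma_{n,r}(x)}\cdot\mu\big(\pi_i^{-1}B^{cu}(x,\delta)\big).
\end{align*}
I expect the main obstacle to be the bookkeeping that makes the inclusion $C(x)\subset\pi_i^{-1}B^{cu}(x,\delta)$ genuinely uniform: one must track how the stable projections $\pi_i$ and $\pi_j$ interact with the iterate $f^n$ (they are not dynamically natural, only Lipschitz), and verify that the distortion of $f^n$ along center-unstable plaques --- which by (H2), i.e. $\lambda_c^+/((\lambda_c^-)^2\lambda_{uu}^-)<1$, is controlled --- does not ruin the comparison. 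The choice of $\tilde r_n$ must be small enough that, inside the relevant charts, the nonlinear terms in $f^n$ and in $\pi_i$, $\pi_j$ are dominated by the leading linear behaviour; this is where the $n$-dependence of $\tilde r_n$ is unavoidable, but crucially $B_1$ stays $n$-independent, which is what makes the Main Inequality work downstream.
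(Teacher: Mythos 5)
There is a genuine gap in the way you account for the factor $\left(\frac{r}{\delta}\right)^2$. Your fibrewise estimate is essentially the paper's proof: on a single unstable curve $\gamma$ crossing $C_i$, the image $f_m(\gamma)$ meets $\pi_j^{-1}B^{cu}(\pi_jf^n(x),r)$ in length $\lesssim Lr$, so $m_\gamma(C(x)\cap\gamma)\lesssim r/\min_{w}\|Df^n_{|E^{uu}_w}\|$, while $m_\gamma(\pi_i^{-1}B^{cu}(x,\delta)\cap\gamma)\gtrsim L^{-1}\delta$, giving a ratio $\lesssim \frac{r}{\delta}\cdot\frac{1}{\min_w\|Df^n_{|E^{uu}_w}\|}$. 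The point you missed is that this single power of $\frac{r}{\delta}$ already \emph{is} the claimed bound: since $\frac{\delta}{r}=10C^{-1}L^2(\lambda_c^-)^{-n}$, one has $\frac{r}{\delta}\cdot\frac{1}{\min_w\|Df^n_{|E^{uu}_w}\|}=\left(\frac{r}{\delta}\right)^2\cdot\frac{10C^{-1}L^2}{(\lambda_c^-)^{n}\min_w\|Df^n_{|E^{uu}_w}\|}=\left(\frac{r}{\delta}\right)^2\cdot\frac{10C^{-1}L^2}{\sigma_{n,r}(x)}$, so the second power of $\frac{r}{\delta}$ comes from an algebraic substitution using the definition of $\delta$, not from a second, transverse gain. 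Your proposed ``transverse half'' is in fact false: the center-direction width of $C(x)$ is of order $\sup\|Df^{-n}_{|E^c}\|\cdot r\sim(\lambda_c^-)^{-n}r\sim\delta$ (this is exactly why $\delta$ is defined the way it is, to make the inclusion $C(x)\subset\pi_i^{-1}B^{cu}(x,\delta)$ work), so the projection of $C(x)$ to a transversal is comparable to, not a fraction $\frac{r}{\delta}$ of, the projection of the $\delta$-cylinder; and in the stable direction both sets are saturated, so there is no gain there either. Moreover, even if some transverse smallness held geometrically, the transverse measure $\hat\mu$ of a u-Gibbs has no a priori scaling regularity (absolute continuity holds only along unstable leaves), so an estimate of the form $\hat\mu(\mathrm{proj}\,C(x))\lesssim\frac{r}{\delta}\,\hat\mu(\mathrm{proj}\,\pi_i^{-1}B^{cu}(x,\delta))$ could not be justified. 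As organized, your proof would stall at that step; the fix is simply to drop the transverse factor and perform the substitution above.

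A secondary, more benign difference: you reduce to unstable curves via the disintegration of $\mu$ and the uniform density bounds of Proposition 2.5(1), whereas the paper approximates $\mu_{|C_i}$ by finite combinations $\sum_j\rho_j m_{\gamma_j}$ with uniformly log-H\"older densities on curves crossing the box (Lemma 4.1), proves the estimate for such measures (picking up the constant $e^{2C_1a_2^{\alpha_1}}$), uses linearity, and passes to the weak-$*$ limit using $\mu(\partial B^{cu}(\cdot,r))=0$. Either reduction can be made to work; if you keep the disintegration route you must still argue that the conditional curves cross the box (so that the lower bound $\gtrsim\delta$ on the denominator is legitimate) and handle the limit or Fubini step carefully, which is exactly what Lemma 4.1 is designed to do.
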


\begin{proof}
First, let us prove this lemma for measures $\mu=m_{\gamma}$, where $\gamma$ is an unstable curve contained in $C_{i}$ that crosses $C_{i}$. If $f^{n}(x)\notin C_{j}$ then the left-hand side of the inequality is zero, so we can suppose that $f^{n}(x)\in C_{j}$. Consider $z\in W^{ss}(x)$ such that $\gamma\subset W^{cu}(z)$ and define the set $V(z)=W^{cu}(z)\cap f_{m}^{-1}\pi_{j}^{-1}B^{cu}(\pi_{j} f^{n}(x),r)$, let us also consider the intervals $I=\gamma^{-1}(V(z))$ and $J$ $=\gamma^{-1}(B^{cu}(z,L^{-1}\delta))$. We note that $V(z)\subset B^{cu}(z,L^{-1}\delta)\subset W^{cu}(z)\cap \pi_{i}^{-1}B^{cu}(x,\delta)$.

\begin{figure}[h!]
  \centering
  \includegraphics[width=0.8\textwidth]{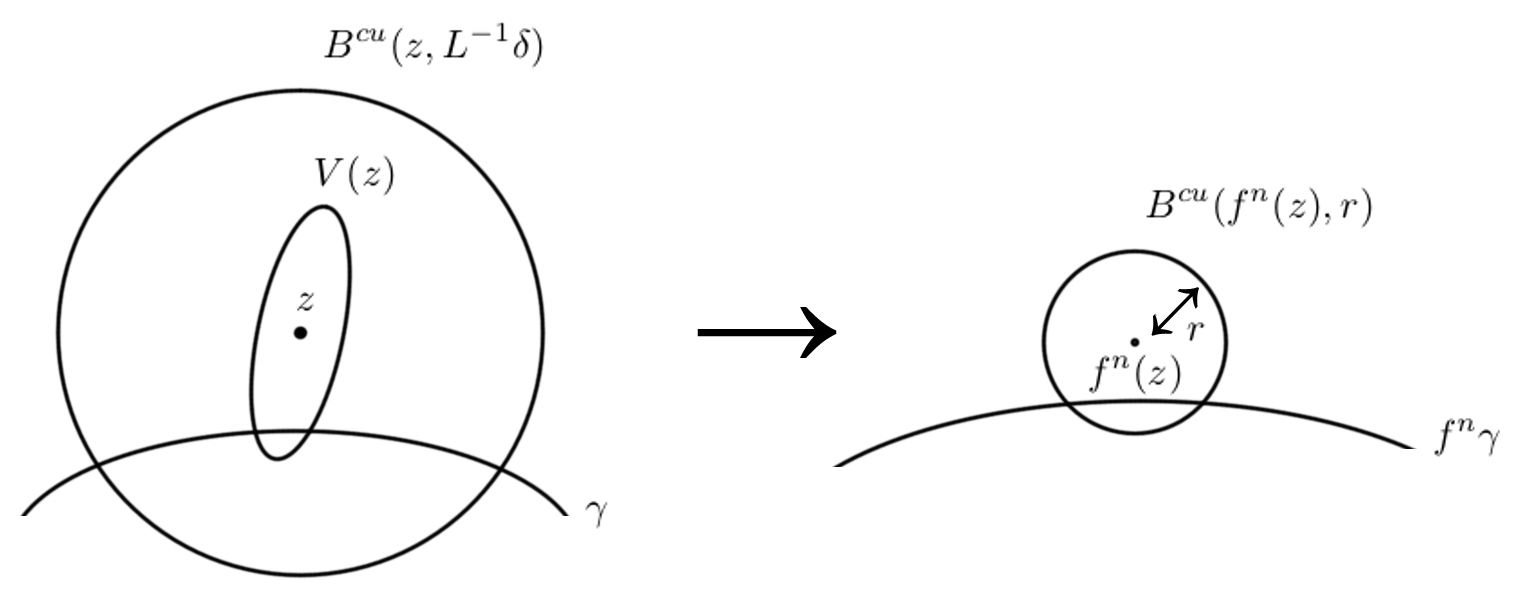}\\
\end{figure}

We want to bound $\frac{m_{\gamma}(V(z))}{m_{\gamma}(B^{cu}(z,L^{-1}\delta))}$, for this purpose we estimate $ \frac{m_{\R}(I)}{m_{\R}(J)}$.

Fix $R_{2}>0$ such that for every $r<R_{2}$ every unstable curve intersects $B^{cu}(\cdot , Lr)$ with length at most $3Lr$. Taking $r<R_{2}$ we see that:
\begin{align}
\nonumber 3Lr &\geq m_{f_{m}(\gamma)}(f_{m}\gamma\cap B^{cu}(f^{n}(z) , Lr) ) \\
\nonumber &\geq m_{f_{m}(\gamma)}(f_{m}\gamma \cap \pi_{j}^{-1} B^{cu}(\pi_{j} f^{n}(z),r)) \\
\nonumber &\geq m_{\gamma}(\gamma\cap f_{m}^{-1}\pi_{i}^{-1}B(x,r)) \cdot \min_{w\in\gamma}{||Df^{n}_{|_{E^{uu}_{w}}}||} \\
\nonumber &\geq m_{\R}(I)\cdot \min_{w\in C(z)}{||Df^{n}_{|_{E^{uu}_{w}}}||}
\end{align}

Then $ m_{\R}(I)\leq\frac{3Lr}{\underset{w\in C(z)}{\min}{||Df^{n}_{|_{E^{uu}_{w}}}||}}$.

When $m_{\gamma}(f_{m}^{-1}\pi_ {j}^{-1}B^{cu}(f^{n}(z),r))=0$, we have $m_{\R}(I)=0$ and the inequality of the lemma holds in this case. When $m_{\gamma}(f_{m}^{-1}\pi^{-1}B^{cu}(f^{n}(z),r))>0$, the choice of $\delta$ ($=10C^{-1}L^{2}(\lambda_{c}^{-})^{-n}r$) guarantees that
\begin{align*}
V(z)&\subset B^{cu}(z, \underset{w\in C(z) }{\min}{||Df^{-n}_{|_{E^{c}_{w}}}}||^{-1} Lr)\\
&\subset B^{cu}(z,C^{-1}(\lambda_{c}^{-})^{-n}Lr)\\
&\subset B^{cu}(z,\frac{1}{2}L^{-1}\delta)
\end{align*}

Take $r$ small enough such that $a_{1} \geq 10\delta$. Since $m_{\R}(\gamma)\geq a_{1} \geq 10\delta$ and $\gamma\cap B^{cu}(z,\frac{1}{10}L^{-1}\delta)\neq\emptyset$, we have that the length of $\gamma\cap B^{cu}(z,\delta)$ is greater than $ \frac{1}{2}L^{-1}\delta$, so $m_{\R}(J)\geq \frac{1}{2}L^{-1}\delta$. Then:
\begin{align*}
\frac{m_{\R}(I)}{m_{\R}(J)} \leq \frac{\frac{3Lr}{\min||Df^{n}_{|_{E^{uu}_{w}}}||}}{\frac{1}{2}L^{-1}\delta}
\leq \left(\frac{r}{\delta}\right)^{2} \frac{60 L^{4}C^{-1}}{(\lambda_{c}^{-})^{n}\cdot \underset{w\in C(z)}{\min}{|| Df^{n}_{|_{E^{uu}_{w}}}||}}
\end{align*}

Thus, the Lemma is valid in the case $\mu=m_{\gamma}$. 

Now, taking $\mu = \rho m_{\gamma}$, with $\gamma$ as before, $\log\rho$ of class $(C_{1},\alpha_{1})$-Holder, $\rho$ defined in an interval of length smaller than $a_{2}$,
 since  
$\frac{\int_{I}{\rho(x)dx}}{\int_{J}{\rho(x)dx}} \leq e^{2C_{1}a_{2}^{\alpha_{1}}} \frac{m_{\R}(I)}{m_{\R}(J)}$
for every $I\subset J\subset \R$, we have
$$\displaystyle \frac{(\rho m_{\gamma})(f_{m}^{-1}\pi^{-1}B(x,r))}{(\rho m_{\gamma})(\pi^{-1}B(\pi f_{m}^{-1}\pi^{-1}(x),\delta))}\leq\frac{e^{2C_{1}a_{2}^{\alpha_{1}}}\cdot{60 L^{4}C^{-1}}}{\sigma_{n,r}}\cdot \left(\frac{r}{\delta}\right)^{2}.
$$

Hence Lemma 4.2 also holds for measures $\mu=\rho m_{\gamma}$, with $B_{1}=e^{2C_{1}a_{2}^{\alpha_{1}}}\cdot{60 L^{4}C^{-1}}$.

In the case where $\mu$ is a finite sum $\mu=\sum_{i=1}^{s_{0}}{\rho_{i}m_{\gamma_{i}}}$, with $\rho_{i}$ and $\gamma_{i}$ as before, the result follows by linearity, since both sides of the inequality are linear in $\mu$.

Finally, if $\mu$ is an ergodic u-Gibbs, we proceed using Lemma 4.1. The measures $\mu_{n}$ given by Lemma 4.1 are finite sum of measures of the type $\rho m_{\gamma}$, so the desired inequality holds for $\mu_{n}$. If we take $r$ smaller than the inverse of the curvature of every unstable curve, then $\partial B^{cu}(\cdot,r)$ intersects every unstable curve at most finitely; so $\mu(\partial B^{cu}(\cdot,r))=0$ for such $r$. Since $\mu$ is the limit of $\mu_{n}$, we can take the limit in each term of the inequality, so what we desired also holds for $\mu$.
\end{proof}

\subsection{One Inequality for the Norm}

\begin{lemma}
There are two constants $B_{2}>0$ and $\sigma_{2}>1$ such that for every $n$, $C_i$ and $C_j$ there exists a constant $R_{n}>0$ such that, for $\delta=10CL(\lambda_{c}^{-})^{-n}r$, for every $r<R_{n}$ and any ergodic u-Gibbs $\mu $:
\begin{align*}
\sum_{m}{||(\pi_{j})_{*}(f_{m})_{*}\mu||_{W_{j},r}^{2}} \leq \frac{B_{2}}{\sigma_{2}^{n}} ||(\pi_{i})_{*}\mu||_{\tilde{W}_{i},\delta}^{2}
\end{align*}
\end{lemma}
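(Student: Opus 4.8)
The goal is to control $\sum_m \|(\pi_j)_*(f_m)_*\mu\|_{W_j,r}^2$ by a small multiple of $\|(\pi_i)_*\mu\|_{\tilde W_i,\delta}^2$, so the strategy is to unfold the definition of the norm, push everything back to $W_i$ via $f_m^{-1}$, and then invoke the two preceding lemmas together with the transversality condition (H1). First I would write $\|(\pi_j)_*(f_m)_*\mu\|_{W_j,r}^2 = \tfrac{1}{r^4}\int_{W_j} \big((f_m)_*\mu\big)(\pi_j^{-1}B^{cu}(z,r))^2\,dm^{cu}(z)$, and change variables to a single reference center-unstable manifold; the point $z\in W_j$ corresponds (through $f_m^{-1}$ and $\pi_i$) to a point $x\in W_i$, and the quantity $\big((f_m)_*\mu\big)(\pi_j^{-1}B^{cu}(z,r)) = \mu\big(f_m^{-1}\pi_j^{-1}B^{cu}(z,r)\big)$ is exactly the object that Lemma 4.2 bounds: it is at most $(r/\delta)^2 \,\tfrac{B_1}{\sigma_{n,r}(x)}\,\mu\big(\pi_i^{-1}B^{cu}(x,\delta)\big)$, with $\delta$ of order $(\lambda_c^-)^{-n}r$.

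Substituting this estimate and squaring, the sum over $m$ becomes (up to the factor $(r/\delta)^4 B_1^2/r^4 = B_1^2/\delta^4$) a sum over $m$ of integrals of $\sigma_{n,r}(x)^{-2}\,\mu(\pi_i^{-1}B^{cu}(x,\delta))^2$, where $x$ ranges over $f_m^{-1}$-preimages sitting in a single box $C_i$. The crucial geometric input is that for a fixed ergodic u-Gibbs $\mu$, the images $f^n(\gamma^{uu})$ of unstable curves under the preimage branches $f_m$ project (via the stable foliation) to curves that are $\theta(\epsilon)$-transversal in the sense of condition (H1). Transversality guarantees that the projections of the different preimage pieces $\pi_j(f_m(\cdot))$ cannot pile up on top of one another — they cross at definite angles — so summing $\mu(\pi_i^{-1}B^{cu}(x,\delta))^2$ over $m$ (with the appropriate change of variables so that each term is integrated over $\tilde W_i$ rather than $W_j$) produces at most a bounded-overlap multiple of $\int_{\tilde W_i}\mu(\pi_i^{-1}B^{cu}(x,\delta))^2\,dm^{cu}(x) = \delta^4\|(\pi_i)_*\mu\|_{\tilde W_i,\delta}^2$. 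The factors of $\delta^4$ then cancel against the $B_1^2/\delta^4$ out front.

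What remains is to extract the contracting factor $\sigma_2^{-n}$. This comes from two sources: the Jacobian of the change of variables $x\mapsto z$ along center-unstable manifolds is of order $(\lambda_c^-)^{-n}\cdot$(unstable expansion)$^{-1}$, while $\sigma_{n,r}(x)^{-2}$ contributes another factor comparable to $(\lambda_c^-)^{2n}\cdot$(unstable expansion)$^{-2}$ in the denominator; and the number of preimage branches $m$ summed over, combined with how area is distributed, introduces a further unstable-expansion factor. Collecting the powers of $\lambda_c^-$ and $\lambda_{uu}^-$, condition (H2) — namely $\lambda_c^+/((\lambda_c^-)^2\lambda_{uu}^-)<1$, equivalently center-unstable volume expansion — is precisely what forces the net exponential rate to be $<1$, giving $\sigma_2>1$. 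I would fix $R_n$ small enough that $\tilde r_n$ from Lemma 4.2, the curvature bounds, and the radius constraint $r(\epsilon)$ in (H1) are all respected, with $\epsilon$ chosen of order $\delta$.

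**Main obstacle.**
The hard part will be turning the qualitative transversality of (H1) into the quantitative bounded-overlap estimate for $\sum_m \mu(\pi_i^{-1}B^{cu}(x,\delta))^2$: one must show that the $\theta(\epsilon)$-transversality of the projected unstable curves, for $\epsilon\sim\delta$, really does limit how many of the sets $\pi_j^{-1}B^{cu}(z,r)$ a given unstable curve can meet, and that the density estimates on $\mu$ (Proposition 2.4(1) plus the Hölder control from Lemma 4.1) survive the projection. The rest — the change-of-variables bookkeeping and the counting of $\lambda$-powers that produces $\sigma_2>1$ from (H2) — is routine but must be done carefully so that all constants are uniform over the finite family of boxes and over all ergodic u-Gibbs $\mu$.
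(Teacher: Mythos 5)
Your opening moves match the paper: unfold the norm, apply Lemma 4.2 pointwise, and change variables along $\pi_i\circ f_m^{-1}\circ\pi_j^{-1}$ back to $\tilde W_i$, with condition (H2) supplying the contraction rate. Indeed, in the paper the whole factor $\sigma_2^{-n}$ comes from a single estimate (Claim 4.1): the change-of-variables Jacobian is bounded by $L^2\sup|\det Df^n|_{E^{cu}}|$, and comparing this with the $(\lambda_c^-)^{2n}\min\|Df^n|_{E^{uu}}\|^2$ already present from Lemma 4.2 (using H\"older continuity of $\log|\det Df|_{E^{cu}}|$ along stable leaves to compare base points) gives exactly $K_2\big[\lambda_c^+/((\lambda_c^-)^2\lambda_u^-)\big]^n$; there is no extra "number of branches'' factor to account for.

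The genuine gap is in how you handle the sum over $m$, which is the step that distinguishes this lemma. You invoke the transversality condition (H1) to get a bounded-overlap estimate, and you flag this as the main obstacle; but (H1) plays no role in this lemma at all. Transversality is reserved for the \emph{cross} terms $m\neq m'$ (Lemma 4.5), where one bounds the bilinear form $\langle(\pi_j)_*(f_m)_*\mu,(\pi_j)_*(f_{m'})_*\mu\rangle$ by controlling how two projected unstable curves from different branches can shadow each other. Here only the squares of the individual branch norms are summed, so "piling up'' of projections of different branches is irrelevant: after the change of variables, the $m$-th term is an integral of $\mu(\pi_i^{-1}B^{cu}(\tilde x,\delta))^2/\delta^4$ over the region of $\tilde W_i$ corresponding to the branch domain $C_{i,j,m}=f^{-n}R_{i,j,m}$, and since these domains are pairwise disjoint subsets of $C_i$ the contributions add up to $\|(\pi_i)_*\mu\|^2_{\tilde W_i,\delta}$. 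Moreover, your proposal to apply (H1) with $\epsilon$ of order $\delta\sim(\lambda_c^-)^{-n}r$ cannot work even where transversality is needed: the lemma must hold for all $r<R_n$ with constants depending only on $n$, whereas $\theta(\epsilon)$ and $r(\epsilon)$ with $\epsilon\to0$ as $r\to0$ would make your overlap constants degenerate; in the paper the transversality parameter is taken to be a fixed separation $d_1$ between distinct components $R_{i,j,m}$, depending on the boxes and $n$ only. So the skeleton up to the change of variables is right, but the mechanism you propose for the summation step is both unnecessary and would not deliver $r$-uniform constants.
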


\begin{proof}
Applying Lemma 4.2 to the measure $\mu$, we see that:
\begin{align}
\nonumber ||(\pi_{j})_{*}(f_{m})_{*}\mu ||^{2}_{W_{i},r} &= \frac{1}{r^{4}}\int_{W_{j}}{\mu(f_{m}^{-1}\pi_{j}^{-1}B^{cu}(y,r))^{2} \d m^{cu}(y)}\\
\nonumber &\leq \frac{B_{1}^{2}}{\delta^{4}} \underset{W_{j}}{\int}{\frac{\mu(\pi_{i}^{-1}B(\pi_{i} f_{m}^{-1}\pi_{j}^{-1}(y),\delta))^{2}}{(\lambda_{c}^{-})^{2n}\cdot\underset{w\in C(y)}{\min}{||Df^{n}_{|_{E^{uu}_{w}}}||^{2}}} \d m^{cu}(y)}
\end{align}

Changing the variables $\tilde{y}=\pi_{i}\circ f_{m}^{-1}\circ\pi_{j}^{-1}(y)$ and noticing that $\pi_{i}\circ f_{m}^{-1}\circ\pi_{j}^{-1}: W_{j}\rightarrow \tilde{W}_{i}$ is an absolutely continuous homeomorphism into the image with Jacobian bounded by $\displaystyle L^{2}\cdot \sup_{x\in W^{ss}(y)}{|\det Df^{n}_{|_{E^{cu}_{x}}}|}$, we have:

\begin{align*}
      ||(\pi_{j})_{*}(f_{m})_{*}\mu ||^{2}_{W_{i},r} &\leq \frac{B_{1}^{2}}{\delta^{4}} \underset{\tilde{W}_{i}\cap R_{i,j,m}}{\int}{\frac{\mu(\pi_{i}^{-1}B(\tilde{y},\delta))^{2} \cdot \Jac(\pi_{i} f_{m}\pi_{j}^{-1}(y)) }{(\lambda_{c}^{-})^{2n}\underset{w\in\pi^{-1}B(\tilde{y},L\delta)}{\min}{||Df^{n}_{|_{E^{uu}_{w}}}||^{2}}} \d m^{cu}(\tilde{y})}  \displaybreak[1]     \\ 
       &\leq \frac{B_{1}^{2}L^{2}}{\delta^{4}} \underset{\tilde{W}_{i}\cap R_{i,j,m}}{\int}{    \frac{\mu(\pi_{i}^{-1}B(\tilde{y},\delta))^{2} \cdot \underset{y_{1}\in W^{ss}_{\tilde{x}}}{\sup}{|\det Df^{n}_{|_{E^{cu}_{y_{1}}}}|}}{(\lambda_{c}^{-})^{2n}\underset{w\in\pi^{-1}B(\tilde{x},L\delta)}{\min}{||Df^{n}_{|_{E^{uu}_{w}}}||^{2}}} \d m^{cu}(\tilde{y})}
\end{align*}

To estimate the term inside the integral we use the central direction neutral condition. For that, we use the following Claim, which will be proved later.

\begin{claim} There exists a constant $K_{2}>0$ such that for every $n\in\N$ there exists $\tilde{r}_{n}>0$ such that for every $r<\tilde{r}_{n}$ it holds:
\begin{align*}
\frac{\underset{y_{1}\in W^{ss}_{\tilde{x}}}{\sup}{|\det Df^{n}_{|_{E^{cu}_{y_{1}}}}|}}{(\lambda_{c}^{-})^{2n}\underset{w\in\pi^{-1}B^{cu}(\tilde{x},L\delta)}{\min}{||Df^{n}_{|_{E^{uu}_{w}}}||^{2}}}\leq K_{2}\left[\frac{(\lambda_{c}^{+})}{(\lambda_{c}^{-})^{2}(\lambda_{u}^{-})}\right]^{n}
\end{align*}
\end{claim}

Supposing that $r<\tilde{r}_{n}$ and considering $\sigma_{2}^{-1}=\frac{\lambda_{c}^{+}}{(\lambda_{c}^{-})^{2}(\lambda_{u}^{-})}<1$, then:
\begin{align}
\nonumber ||(\pi_{j})_{*}(f_{m})_{*}\mu ||^{2}_{W_{i},r} &\leq \frac{B_{1}^{2}L^{2}K_{2}}{\sigma_{2}^{n}} \underset{\tilde{W}_{i}\cap R_{i,j,m}}{\int}{\frac{\mu(\pi_{i}^{-1}B^{cu}(\tilde{x},\delta))^{2}}{\delta^{4}} \d m^{cu}(\tilde{x})}
\end{align}

Defining $B_{2}=B_{1}^{2}L^{2}K_{2}$ and adding this inequality in $m$, we get:
\begin{align*}
\sum_{m}{||(\pi_{j})_{*}(f_{m})_{*}\mu||^{2}_{W_{i},r}} &\leq \frac{B_{2}}{\sigma_{2}^{n}}\sum_{m}{\underset{\tilde{W}_{i}\cap R_{i,j,m}}{\int}{\frac{\mu(\pi_{i}^{-1}B^{cu}(\tilde{x},\delta))^{2}}{\delta^{4}} \d m^{cu}(\tilde{x})}} \\
&= \frac{B_{2}}{\sigma_{2}^{n}}\underset{\tilde{W}_{i}}{\int}{\frac{(\pi_{i})_{*}\mu(B(\tilde{x},\delta))^{2}}{\delta^{4}} \d m^{cu}(\tilde{x})} \\
&=\frac{B_{2}}{\sigma_{2}^{n}} ||(\pi_{i})_{*}\mu||^{2}_{\tilde{W}_{i},\delta}
\end{align*}
\end{proof}

Let us now prove Claim 4.1.

\begin{proof}[Proof of Claim 4.1]
We first consider the case where $y_{1}\in W^{ss}_{\tilde{x}}$ and $w=\tilde{x}$.

There exists $\tilde{K}_{2}>0$ such that $ \frac{|det Df^{n}_{|_{E^{cu}_{y_{1}}}}|}{|det Df^{n}_{|_{E^{cu}_{y_{2}}}}|} \leq \tilde{K}_{2}$ for every $y_{2}\in W^{ss}_{y_{1}}$ and every $n\geq 0$, this is due to the following:
\begin{align}
\nonumber \log|\det Df^{n}_{|_{E^{cu}_{y_{1}}}}| - \log|\det Df^{n}_{|_{E^{cu}_{y_{2}}}}| &= \sum_{j=1}^{n}{\log|\det Df_{|_{E^{cu}_{f^{j}(y_{1})}}}| - \log|\det Df_{|_{E^{cu}_{f^{j}(y_{2})}}}|} \\
\nonumber &\leq \sum_{j\geq 0}{\tilde{C}_{2} d(f^{j}(y_{1}),f^{j}(y_{2}))^{\alpha_{2}}} \\
\nonumber &\leq \sum_{j=0}^{+\infty}{\tilde{C}_{2} l ((\lambda_{ss})^{\alpha_{2}})}^{j} := \tilde{K}_{2}
\end{align}

Here we have used that the function $x \rightarrow \log|\det Df_{|_{E^{cu}_{x}}}|$ is $(\tilde{C}_{2},\alpha_{2})$-Holder and that the length of $W^{ss}$ is uniformly bounded by $l$ inside the boxes. We then have:

\begin{align*}
\frac{|\det Df^{n}_{|_{E^{cu}_{y_{1}}}}|}{(\lambda_{c}^{-})^{2n}\cdot ||Df^{n}_{|_{E^{uu}_{\tilde{x}}}}||^{2}} &\leq \tilde{K}_{2}\cdot \left[ \frac{|\det Df^{n}_{|_{E^{cu}_{\tilde{x}}}}|}{(\lambda_{c}^{-})^{2n}\cdot ||Df^{n}_{|_{E^{uu}_{\tilde{x}}}}||^{2}} \right]\\
&=  \frac{\tilde{K}_{2} \cdot ||Df^{n}_{|_{E^{c}_{\tilde{x}}}}|| \cdot ||Df^{n}_{|_{E^{uu}_{\tilde{x}}}}||}{(\lambda_{c}^{-})^{2n} ||Df^{n}_{|_{E^{uu}_{\tilde{x}}}}||^{2}} \\
&\leq \frac{C^{2}\tilde{K}_{2}(\lambda_{c}^{+})^{n}}{C^{-1}(\lambda_{c}^{-})^{2n} (\lambda_{u}^{-})^{n}} \\
&\leq \frac{C^{3}\tilde{K}_{2}}{\sigma_{2}^{n}}
\end{align*}

Now, if $y_{1}\in W^{ss}_{\tilde{x}}$ and $w\in \pi_{j}^{-1} B^{cu}(\tilde{x},L\delta)$, by continuity, there exists $\tilde{r}_{n}=\tilde{r}_{n}(w_{2})$ such that $\frac{1}{2} \leq \frac{||Df^{n}_{|_{E^{uu}_{w_{1}}}}||}{||Df^{n}_{|_{E^{uu}_{w_{2}}}}||}\leq 2$ for every $w_{1}\in B^{cu}(w_{2},r_{n}(w_{2}))$. By compactness, $\tilde{r}_{n}$ can always be taken to be uniform in $w_{2}$. Taking $\tilde{r}_{n}$ small enough, for every $r<\tilde{r}_{n}$, every $w\in B^{cu}(z,\tilde{r}_{n})$ and any $z\in W^{ss}_{\tilde{x}}=W^{ss}_{y_{1}}$, we have:
\begin{align*}
\frac{|\det Df^{n}_{|_{E^{cu}_{y_{1}}}}|}{(\lambda_{c}^{-})^{2n}\cdot ||Df^{n}_{|_{E^{uu}_{w}}}||^{2}} \leq \frac{C^{3}\tilde{K}_{2}\cdot |\det Df^{n}_{|_{E^{cu}_{z}}}|}{(\lambda_{c}^{-})^{2n}\cdot \frac{1}{2} ||Df^{n}_{|_{E^{uu}_{z}}}||^{2}} \leq \frac{K_{2}}{\sigma_{2}^{n}}
\end{align*}
\end{proof}

\subsection{Inequalities using the Transversality}

\begin{lemma}
For every $\theta\in(0,\frac{\pi}{2})$ there exist constants $r_{\theta}$ and $C_{\theta}$ for which the following is valid: given Lipschitz curves $\gamma_{1}$ and $\gamma_{2}$ of finite length contained in some center-unstable manifold contained in $V$ that are $\theta$-transversal in neighborhoods of radius $r_{\theta}$, for every $r<\frac{r_{\theta}}{4}$ and every connected component $E_{r}$ of $\big\{ s, |d(\gamma_{1}(s),\gamma_{2})|\leq r \big\}$, the length of $\gamma_{1}(E_{r})$ is at most $C_{\theta} r$.
\end{lemma}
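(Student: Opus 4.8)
The plan is to reduce the statement to an elementary two-dimensional fact about transversal curves, and then to transfer it to the center-unstable manifold using the charts $\psi_i$ furnished by Proposition 2.6. First I would unwind the definition of $\theta$-transversality in neighborhoods of radius $r_\theta$: by definition, for every $i$ and every pair of connected components $\tilde\gamma_1,\tilde\gamma_2$ of the images $\psi_i(\gamma_k\cap U_i)$ in $\R^2$, whenever two points $x_1\in\tilde\gamma_1$, $x_2\in\tilde\gamma_2$ are at distance less than $r_\theta$ there are cones $C_1,C_2$ at $x_1,x_2$ containing the local pieces of the curves with $\angle(v_1,v_2)\ge\theta$ for all directions $v_k$ in $C_k$. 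Since $\psi_i$ and $\psi_i^{-1}$ are uniformly Lipschitz with constant $K_1$ (the constant fixed after Proposition 2.6), a center-unstable ball $B^{cu}(\cdot,r)$ is contained in the $\psi_i$-image of a Euclidean ball of radius $K_1 r$ and vice versa; hence it suffices to prove the length bound in $\R^2$ with the radius rescaled by $K_1^{\pm1}$, losing only a multiplicative constant in the final $C_\theta$.

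So fix $i$, work in $\R^2$, and let $E_r$ be a connected component of $\{s : d(\tilde\gamma_1(s),\tilde\gamma_2)\le r\}$ with $r<r_\theta/4$. The key geometric step is: along $\tilde\gamma_1(E_r)$, because every point is within $r$ of $\tilde\gamma_2$, for each parameter $s\in E_r$ one picks a nearest point $p(s)\in\tilde\gamma_2$ with $|\tilde\gamma_1(s)-p(s)|\le r<r_\theta$; transversality then forces the tangent direction of $\tilde\gamma_1$ at $s$ to make angle at least $\theta$ with the tangent direction of $\tilde\gamma_2$ at $p(s)$. Consequently, if one decomposes the displacement vector $\tilde\gamma_1(s')-\tilde\gamma_1(s)$ for $s,s'\in E_r$ into components parallel and perpendicular to the $\tilde\gamma_2$-direction (say at some fixed reference point on the local piece of $\tilde\gamma_2$, which varies by less than the cone width over the relevant scale), the component of $\tilde\gamma_1$ transverse to $\tilde\gamma_2$ moves at a definite rate $\sin\theta$ relative to arclength on $\tilde\gamma_1$, while that transverse component is trapped in an interval of width at most $2r + (\text{cone width}) \lesssim r$ since $\tilde\gamma_1(E_r)$ stays in the $r$-tube around a piece of $\tilde\gamma_2$ of diameter comparable to $\mathrm{length}(\tilde\gamma_1(E_r))$. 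Quantitatively: project everything onto the unit normal direction of $\tilde\gamma_2$; the projection of $\tilde\gamma_1|_{E_r}$ is a Lipschitz curve whose speed is bounded below by $\sin\theta$ (up to the arbitrarily small cone width, absorbed into, say, $\tfrac12\sin\theta$), and its image lies in an interval of length $O(r)$; therefore $\mathrm{length}(\tilde\gamma_1(E_r)) \le \frac{O(r)}{\tfrac12\sin\theta} =: C_\theta' r$. Pulling back through $\psi_i^{-1}$ multiplies lengths by at most $K_1$, and summing/maximizing over the finitely many charts $i$ gives the uniform constant $C_\theta$, with $r_\theta$ chosen so that $K_1 r < r_\theta^{\R^2}/4$ keeps us inside the regime where the $\R^2$-transversality hypothesis applies.

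The main subtlety — and the step I would be most careful about — is handling the non-differentiability of the curves: the tangent vectors $v_1,v_2$ are not unique, only constrained to lie in cones, and a nearest-point projection $p(s)$ need not be unique or continuous. The clean way around this is to avoid tangent vectors of $\tilde\gamma_2$ altogether: fix a single direction $e$ that is $\theta$-transverse to the whole local piece of $\tilde\gamma_2$ intersecting $B(\tilde\gamma_1(s_0),2r_\theta)$ for one $s_0\in E_r$ (this exists because all tangent cones of $\tilde\gamma_2$ there point within the cone width of one fixed line, by the cone condition applied along $\tilde\gamma_2$ itself), note that $\tilde\gamma_1|_{E_r}$ has all its tangent cones $\theta/2$-transverse to $e$ as well, and then the projection onto $e^\perp$ is strictly monotone-with-definite-speed on $\tilde\gamma_1|_{E_r}$ by the cone condition for $\tilde\gamma_1$, while its range has length $O(r)$ because the whole of $\tilde\gamma_1(E_r)$ lies in the $r$-neighborhood of that local piece of $\tilde\gamma_2$, which has $e^\perp$-width $O(r)$ plus the (small) variation of $\tilde\gamma_2$ in the $e^\perp$-direction, itself controlled again by the cone condition. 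This reduces everything to a one-variable monotonicity estimate and sidesteps all issues about differentiability or about the geometry of the nearest-point map.
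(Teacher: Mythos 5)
Your proposal follows essentially the same route as the paper: reduce to $\R^{2}$ via the uniformly bi-Lipschitz charts $\psi_{i}$ of Proposition 2.4 (losing only a bounded factor in lengths and radii), and then bound $l(\gamma_{1}(E_{r}))$ by observing that the coordinate transverse to a fixed reference line adapted to $\gamma_{2}$ (the paper uses the extremal line of the cone $C_{2}$, you use the normal direction of $\gamma_{2}$) moves along $\gamma_{1}$ with speed bounded below by $\sim\sin\theta$ while being trapped in a band of width $O(r)$, which is exactly the paper's graph estimate $l(\gamma_{1}(E_{r}))\leq \frac{2}{\tan\theta}\,r$. The only blemish is that in your last paragraph the roles of $e$ and $e^{\perp}$ get swapped (if $e$ is chosen transverse to the local piece of $\gamma_{2}$, it is the $e$-component, not the $e^{\perp}$-component, of $\gamma_{1}|_{E_{r}}$ that is monotone with definite speed and confined to an $O(r)$-interval, and transversality of $\gamma_{1}$ to $e$ itself does not follow); your second paragraph, projecting onto the normal direction of $\gamma_{2}$, is the correct formulation and matches the paper's argument.
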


\begin{proof}
We consider first the case where $\gamma_1$ and $\gamma_2$ are curves contained in $\R^{2}$.

If the curves $\gamma_{1}$ and $\gamma_{2}$ are contained in $\R^{2}$, for every point $\gamma_{2}(t)$ we consider cones $C_{1}(t)$ and $C_{2}(t)$ as in the definition of $\theta$-transversality. By transversality in neighborhoods of radius $r$, the curves intersects at most once on neighborhoods of radius smaller than $\frac{r}{2}$. We also suppose that the extremal line of $C_{2}(t)$ closest to $\gamma_1$ is contained in the axes $x$ and that ${\gamma_{1}}_{|_{\gamma_{1}^{-1}E_{r}}}$ is a graph over it. We will consider ${\gamma_{1}}({|_{\gamma_{1}^{-1}E_{r}}})$ as a graph over $\gamma_{2}^{-}$.
\begin{figure}[h]
  \psfrag{A}{$\theta$}
  \includegraphics[width=0.6\textwidth]{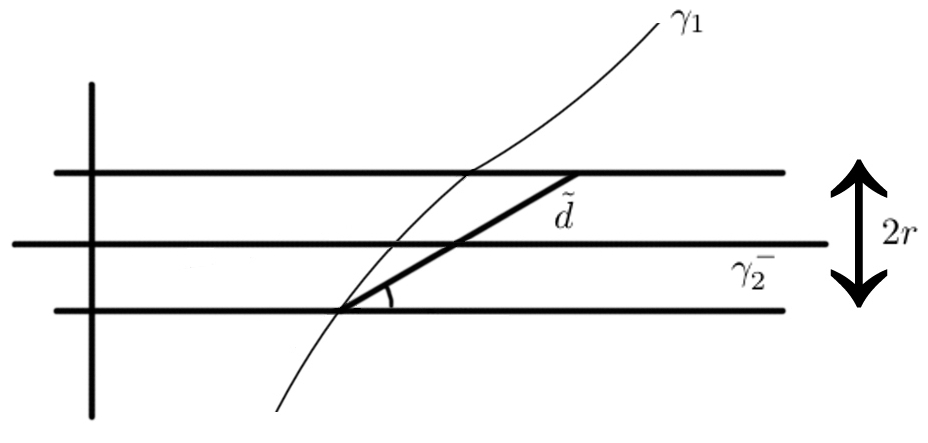}
  \psfrag{b}{by}
  \psfrag{ca}{oi}
\end{figure}

Notice that $\gamma_{1}(t)$ increases at least $C_{\theta} t$. Actually, the length of $\tilde{d}$ is at most $\frac{2r}{\sin(\theta)}$, and this geometric comparison passes to the following inequality:
\begin{align*}
 l(\gamma_{1}(E_{r})) &\leq m_{\R}\big(\{ s, d(\gamma_{1}^{+}(s),\gamma_{2}^{-})\leq r \}\big)\\
 &\leq \frac{2}{\tan(\theta)} r = C_{\theta} r
\end{align*}

Now, suppose that the curves are contained in the same center-unstable manifold. Let $U_{i}$ be the sets given by Proposition 2.4 and the open set $\displaystyle V=\cup_{i}{U_{i}}$ that contains $\Lambda$,
since the family of diffeomorphisms $\{ \psi_{i} \}_{i\in I}$ given by Proposition 2.4 have derivative uniformly bounded, taking $T=\underset{i\in I}{\sup}||D{\psi_{i}}_{|_{_{W^{cu}_{z}}}}||$, the length of $\gamma_{1}(E_{r})$ is smaller than the length of $\psi_{i}(\gamma_{1}(E_{r}))$ times $T$. Let $r_{\theta}$ be such that $C_{\theta}r_{\theta} T < \frac{R_{1}}{2}$, then $\gamma_{1}(E_{r})$ is contained in some $U_{i_{0}}$, $i_{0}\in I$. The result follows taking $r<\min\{R_{1}, r_{\theta} \}$ and noticing that in $V$ the distances $d^{cu}(x,y)$ and $d(x,y)$ are equivalent, and that $\{ s, d(\gamma_{1}(s),\gamma_{2})\leq r\} \subset \{ s , d(\psi_{i_{0}}\gamma_{1}(s), \psi_{i_{0}}\gamma_{2}) \leq Tr \}$.
\end{proof}

The transversality between stable projections of unstable curves can be used to estimate the bilinear form of stable projections of u-Gibbs.

\begin{lemma}
Given $n$, $C_{i}$ and $C_{j}$, there exist $\hat{r}_{n}>0$ and $A_{n}>0$ such that for every ergodic u-Gibbs $\mu$, $r<\hat{r}_{n}$ and for every $m \neq m'$, the following holds:
\begin{equation*}
\langle (\pi_{j})_{*}(f_{m})_{*}\mu , (\pi_{j})_{*}(f_{m'})_{*}\mu \rangle_{W_{j}, r} \leq A_{n}\cdot | (\pi_{j})_{*}(f_{m})_{*}\mu | \cdot | (\pi_{j})_{*}(f_{m'})_{*}\mu |
\end{equation*}
\end{lemma}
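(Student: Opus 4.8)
The plan is to bound the bilinear form $\langle (\pi_{j})_{*}(f_{m})_{*}\mu , (\pi_{j})_{*}(f_{m'})_{*}\mu \rangle_{W_{j}, r}$ by exploiting the transversality condition (H1), which forces the stable projections of the two families of unstable curves coming from the distinct components $R_{i,j,m}$ and $R_{i,j,m'}$ to be $\theta(\epsilon)$-transversal once they are separated by a definite $d^{ss}$-distance. The starting observation is that $f_m(\gamma)$ for an unstable curve $\gamma$ crossing $C_i$ is itself an unstable curve crossing $C_j$, and similarly for $m'$; since $R_{i,j,m}$ and $R_{i,j,m'}$ are \emph{distinct} connected components of $f^n(C_i)\cap C_j$, there is a uniform lower bound (depending on $n$ and the box geometry) for the $d^{ss}$-distance between $f_m(\gamma)$ and $f_{m'}(\gamma')$, or at least after one more backward iterate one can guarantee the separation $\epsilon$ needed to invoke (H1). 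Applying (H1) with $W^{cu}_3 = \tilde W_j$ (the common center-unstable manifold onto which $\pi_j$ projects) gives that $\pi_j(f_m\gamma)$ and $\pi_j(f_{m'}\gamma')$ are $\theta_n$-transversal in neighborhoods of radius $\rho_n := r(\epsilon_n)$ for suitable $\theta_n,\epsilon_n$ depending only on $n$.

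Next I would use Lemma 4.1 to write each $\mu$ restricted to the relevant box as a limit of finite convex combinations $\sum_k \rho^n_k m_{\gamma^n_k}$ of measures on unstable curves crossing the box, so that it suffices to prove the inequality when $(\pi_j)_*(f_m)_*\mu$ and $(\pi_j)_*(f_{m'})_*\mu$ are replaced by measures supported on a single pair of projected curves $\pi_j(f_m\gamma)$, $\pi_j(f_{m'}\gamma')$ — and then pass to the limit using Lemma 3.3 (weak-$*$ continuity of $\langle\cdot,\cdot\rangle_{W,r}$). For a single pair of curves $c_1=\pi_j(f_m\gamma)$, $c_2=\pi_j(f_{m'}\gamma')$ carrying Hölder densities, one expands
\begin{align*}
\langle \rho_1 m_{c_1}, \rho_2 m_{c_2}\rangle_{W_j,r} = \frac{1}{r^4}\int_{W_j} (\rho_1 m_{c_1})(B(z,r)) \cdot (\rho_2 m_{c_2})(B(z,r)) \, \d m^{cu}(z).
\end{align*}
The integrand is supported on the set of $z$ whose $r$-ball meets both $c_1$ and $c_2$; by the triangle inequality such $z$ lie within $2r$ of a point of $c_1$ that is within $2r$ of $c_2$, i.e.\ within $2r$ of $c_1(E_{4r})$ in the notation of Lemma 4.4. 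By Lemma 4.4, applied with $\theta = \theta_n$ once $r < r_{\theta_n}/4$, each connected component of $c_1(E_{4r})$ has length $\lesssim C_{\theta_n} r$, so the $z$-region has $m^{cu}$-measure $\lesssim C_{\theta_n} r \cdot r = C_{\theta_n} r^2$ per component; the number of components is uniformly bounded because the curves have bounded length and transversality precludes too many near-intersections. On that region one bounds $(\rho_i m_{c_i})(B(z,r)) \le \|\rho_i\|_\infty \cdot (\text{length of } c_i \cap B(z,r)) \le \text{const}\cdot r$ using the bounded-curvature/crossing estimate already used in Lemma 4.2 (unstable curves meet an $r$-ball in length $O(r)$), and the Hölder control of $\rho_i$ to compare the local mass to the total mass $|\rho_i m_{c_i}| = |(\pi_j)_*(f_{m})_*(\rho_i m_\gamma)|$ up to a uniform multiplicative constant. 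Collecting: $\langle \cdot,\cdot\rangle_{W_j,r} \lesssim r^{-4}\cdot C_{\theta_n} r^2 \cdot r \cdot r \cdot (\text{normalization}) = A_n \cdot |(\pi_j)_*(f_m)_*\mu|\cdot|(\pi_j)_*(f_{m'})_*\mu|$, with $A_n$ depending on $\theta_n$, the curve-length bounds, the Hölder constants $(C_1,\alpha_1)$, and the box charts — but crucially \emph{not} on $r$, for $r<\hat r_n$.

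The main obstacle I anticipate is the bookkeeping needed to guarantee that (H1) actually applies: one must check that the two projected unstable curves satisfy the hypotheses of the transversality condition — bounded length (controlled by $a_2$ and the Lipschitz bound $L$), $d^{ss}$-distance to $W^{cu}_3=\tilde W_j$ smaller than $\epsilon_0/2$ (ensured by choosing the boxes with small $d^{ss}$-diameter), and $d^{ss}$-distance between the two curves bounded below by some $\epsilon = \epsilon_n > 0$ (this is where distinctness of the components $R_{i,j,m}$ and $R_{i,j,m'}$ enters, possibly after replacing $n$ by $n$ plus a bounded constant, or shrinking the boxes, so that distinct components are genuinely $d^{ss}$-separated). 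Getting a uniform $\epsilon_n$ — and hence uniform $\theta_n = \theta(\epsilon_n)$ and $\rho_n = r(\epsilon_n)$, which then fixes the threshold $\hat r_n = \min\{\rho_n/4, \ldots\}$ — requires a compactness argument over the finitely many pairs $(i,j,m,m')$ for fixed $n$. A secondary technical point is the passage from the single-curve estimate to the general u-Gibbs via Lemma 4.1: one must ensure the constant $A_n$ is uniform over all the curves appearing in the approximating sums, which follows since the Hölder exponents and constants are fixed by Lemma 4.1 and all curves crossing a box have length in $[a_1,a_2]$; then Lemma 3.3 gives convergence of both sides.
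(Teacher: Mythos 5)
Your main estimate (expand the bilinear form, use Lemma 4.4 plus a uniform bound $M_n$ on the number of near-intersection components to bound the region of $z$ charging both curves, then convert $\sup$-values of the H\"older densities into total masses via the length bounds $a_1,a_2$, and finally pass from single curves to a u-Gibbs by linearity, Lemma 4.1 and Lemma 3.3) is essentially the computation in the paper. But there is a genuine gap at the very first step: you assume that distinctness of the components $R_{i,j,m}$ and $R_{i,j,m'}$ forces a uniform lower bound on their $d^{ss}$-distance, ``possibly after one more backward iterate or shrinking the boxes.'' This is false in general: two distinct connected components of $f^{n}(C_i)\cap C_j$ can lie in the same (or in $d^{ss}$-nearby) center-unstable leaves --- for instance when a single unstable leaf of $f^{n}(C_i)$ winds around and crosses $C_j$ twice --- so their $d^{ss}$-distance can be arbitrarily small or zero, and no extra backward iterate, box-shrinking, or compactness over the finitely many pairs $(i,j,m,m')$ will create $d^{ss}$-separation. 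In that situation (H1) simply does not apply, so the transversality input of your argument is unavailable.

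The paper resolves this with a dichotomy you are missing: by finiteness there are $d_1,d_2>0$ such that for $m\neq m'$ either $d^{ss}(R_{i,j,m},R_{i,j,m'})>d_1$ or $d^{cu}(R_{i,j,m},R_{i,j,m'})>d_2$. In the first case one sets $\theta_n=\theta(d_1)$, $\hat R_n=r(d_1)$ and runs exactly the estimate you describe. In the second case no transversality is needed at all: taking $r$ small compared with $d_2$ (and with the Lipschitz constant of $\pi_j$), no ball $B^{cu}(z,r)$ can intersect the supports of both $(\pi_j)_*(f_m)_*\mu$ and $(\pi_j)_*(f_{m'})_*\mu$, so the bilinear form vanishes identically and the inequality is trivial. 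Once you add this case split (and draw the threshold $\hat r_n$ from $\min\{\hat R_n/4, d_2/4\}$ rather than only from $r(\epsilon_n)$), the rest of your proposal matches the paper's proof.
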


\begin{proof}
By finiteness, there exist constants $d_{1}>0$ and $d_{2}>0$ such that for every pair of components $f^{n}(C_{i,j,m})$ and $f^{n}(C_{i,j,m'})$, $m\neq m'$, it is valid that either $d^{ss}(R_{i,j,m},R_{i,j,m'})>d_{1}$ or $d^{cu}(R_{i,j,m},R_{i,j,m'})>d_{2}$. Consider $\theta_{n}=\theta(d_{1})$, $\hat{R}_{n}=r(d_{1})$ given by the transversality condition and $r<\min\{\frac{\hat{R}_{n}}{4},\frac{d_{2}}{4}\}$. 

Lemma 4.5 will be proved first for measures supported on single unstable curves and after for approximations of $\mu$ by combination of measures supported in unstable curves.

Suppose $\mu_{1}=\rho_{1} m_{\gamma_{1}}$ and $\mu_{2}=\rho_{2} m_{\gamma_{2}}$, where $\log\rho_{i}$ is $(C_{1},\alpha_{1})$-Holder and $\gamma_{i}$ is an unstable curve contained in $C_{i}$ that crosses $C_{i}$, $i=1,2$. Consider the Lipschitz curves $\tilde{\gamma}_{m} = \pi_{j}\circ f_{m} (\gamma_{1})$ and $\tilde{\gamma}_{m'} = \pi_{j}\circ f_{m'} (\gamma_{2})$, the measures $\nu_{m}=(\pi_{j})_{*}(f_{m})_{*}\mu_{1}$ and $\nu_{m'}=(\pi_{j})_{*}(f_{m'})_{*}\mu_{2}$ supported on these curves, consider the densities $\rho_{m}=\frac{d\nu_{m}}{d m_{\tilde{\gamma}_{m}}}=(\Jac \pi_{j}^{ss})^{-1} (\Jac f_{m})^{-1} \rho_{1}$ and $\rho_{m'}=\frac{d\nu_{m'}}{d m_{\tilde{\gamma}_{m'}}}=(\Jac \pi_{j}^{ss})^{-1} (\Jac f_{m'})^{-1} \rho_{2}$. In this case we will prove that $\langle \nu_{m} , \nu_{m'} \rangle_{W_{j},r}\leq C_{\theta_{n}} |\nu_{m}| |\nu_{m'}|$.

There exist constants $C_{3}$ and $\alpha_{3}$ (depending on $C_{1}$, $\alpha_{1}$, $L$) such that the densities $\rho_{m}$ and $\rho_{m'}$ are $(C_{3},\alpha_{3})$-Holder functions. Since the curves $\tilde{\gamma}_{*}$ have length uniformly bounded by $L^{-1}a_{1}$ and $L a_{2}$, we have ${\rho_{*}(\gamma_{*}(x))}\leq e^{C_{3}(L a_{2})^{\alpha_{3}}}{\rho_{*}(\gamma_{*}(y))}$, $*\in\{m,m'\}$. Let  $C_{4}$  be a constant such that $m^{cu}(B^{cu}(z,r))\leq C_{4}r^{2}$ for every $z \in\cup{C_{i}}$.

Define the function $\mathbbm{1}_{r}$ by $\mathbbm{1}_{r}(z_{1},z_{2})=1$ if $d^{cu}(z_{1},z_{2})<r$ and $\mathbbm{1}_{r}(z_{1},z_{2})=0$ otherwise.

If $d^{cu}(R_{i,j,m},R_{i,j,m'})<r $ then $\langle \nu_{m} , \nu_{m'} \rangle_{W_{j},r}$ is zero and the inequality is valid. Else, $d^{ss}(R_{i,j,m},R_{i,j,m'})>d_{1}$, and so:

\begin{align}
\nonumber \langle \nu_{m}, \nu_{m'} \rangle_{W_{j},r} &= \frac{1}{r^{4}}\underset{W_{j}}{\int}{\nu_{m}(B(z,r)) \nu_{m'}(B(z,r)) \d m^{cu}(z)} \\
\nonumber &=\frac{1}{r^{4}} \underset{W_{j}}{\int} \Big(\underset{\tilde{\gamma}_{m}\cap B(z,r)}{\int}{\rho_{m}(\tilde{\gamma}_{m}(s)) \d m_{\tilde{\gamma}_{m}}(s)}\Big)\\
\nonumber &\quad\quad\quad\quad\quad\quad  \Big(\underset{\tilde{\gamma}_{m'}\cap B(z,r)}{\int}{\rho_{m'}(\tilde{\gamma}_{m'}(t)) \d m_{\tilde{\gamma}_{m'}}(t)}\Big) \d m^{cu}(z) \\
\nonumber &= \frac{1}{r^{4}}  \underset{W_{j}}{\int} \underset{\tilde{\gamma}_{m}}{\int} \underset{\tilde{\gamma}_{m'}}{\int} \Big( \rho_{m}(\tilde{\gamma}_{m}(s)) \rho_{m'}(\tilde{\gamma}_{m'}(t))  \mathbbm{1}_{r}(\tilde{\gamma}_{m}(s),z) \mathbbm{1}_{r}(\tilde{\gamma}_{m'}(t),z) \Big) \d s \d t \d z\\
%
\nonumber &\leq \frac{1}{r^{4}} \underset{\tilde{\gamma}_{m'}}{\int} \rho_{m'}(t) \bigg( \underset{\tilde{\gamma}_{m'}}{\int}\rho_{m}(s)\cdot \mathbbm{1}_{2r}(\tilde{\gamma}_{m}(s), \tilde{\gamma}_{m'}(t) )\\
\nonumber &\quad\quad\quad\quad\quad\quad \Big(\underset{W_{j}}{\int}\mathbbm{1}_{r}(\tilde{\gamma}_{m}(s),z) \d z\Big) \d s \bigg) \d t\\
\nonumber \quad\quad\quad &\leq \frac{C_{4}}{r^{2}} \underset{\tilde{\gamma}_{m}}{\int} \underset{\tilde{\gamma}_{m'}}{\int} \Big(\rho_{m}(\tilde{\gamma}_{m}(s)) \rho_{m'}(\tilde{\gamma}_{m'}(t))  \mathbbm{1}_{2r}( \tilde{\gamma}_{m}(s) , \tilde{\gamma}_{m'}(t) )\Big) \d s \d t\\
\nonumber \quad\quad\quad &\leq \frac{C_{4} e^{2C_{3}(La_{2})^{\alpha_{3}}}}{r^{2}} \rho_{m}(\gamma_{m}(s_{0})) \rho_{m'}(\gamma_{m'}(t_{0}))\\
\nonumber \quad\quad\quad &\quad\quad\quad\quad\quad\quad \Big(\underset{\tilde{\gamma}_{m}}{\int} \mathbbm{1}_{2r}(\tilde{\gamma}_{m}(s), \tilde{\gamma}_{m'}) \d s \Big) \Big(\underset{\tilde{\gamma}_{m'}}{\int} \mathbbm{1}_{2r}(\tilde{\gamma}_{m}, \tilde{\gamma}_{m'}(t)) \d t \Big)
\end{align}

To estimate the last term in parentheses, we will use Lemma 4.4. Take $r<R_{\theta_n}$ small such that Lemma 4.4 holds and consider the constant $C_{\theta_{n}}$ given by that lemma. Note also that for these projections of unstable curves, by transversality in neighborhoods with radius smaller than $\hat{R}_{n}$, there exists an integer $M_{n}\in\N$ such that each $\tilde{\gamma}_{m}$ intersects the neighborhood of radius $r$ of $\tilde{\gamma}_{m'}$ in at most $M_{n}$ connected components. Then:
\begin{align}
\nonumber \langle \nu_{m}, \nu_{m'} \rangle_{W_{j},r} &\leq \frac{C_{5}}{r^{2}}  \rho_{m}(\gamma_{m}(s_{0})) \rho_{m'}(\gamma_{m'}(t_{0})) \cdot (M_{n}C_{\theta_{n}} r)^{2} \\
\nonumber &= (C_{5} (M_{n}C_{\theta_{n}})^{2}) \cdot \rho_{m}(\gamma_{m}(s_{0})) \rho_{m'}(\gamma_{m'}(t_{0}))
\end{align}

In what follows, we will use that $\underset{\tilde{\gamma}_{m}}{\int}{\rho_{m}(\gamma_{m}(s)) \d s} \geq K_{3} \rho_{m}(\gamma_{m}(s_{0}))$ for some constant $K_{3}>0$. This is due to a simple calculation:
\begin{align}
\nonumber \underset{\tilde{\gamma}_{m}}{\int}\rho_{m}(\gamma_{m}(s)) \d s &\geq \underset{\tilde{\gamma}_{m}}{\int} e^{-(C_{2}(La_{2})^{\alpha_{2}})} \rho_{m}(\gamma_{m}(s_{0})) \d s\\
\nonumber &= e^{-(C_{2}(La_{2})^{\alpha_{2}})}  \rho_{m}(\gamma_{m}(s_{0}))  l(\tilde{\gamma}_{m})\\
\nonumber &\geq \left(L^{-1} a_{1} e^{-(C_{2}(La_{2})^{\alpha_{2}})} \right) \rho_{m}(\gamma_{m}(s_{0})) \\
\nonumber &= K_{3}\rho_{m}(\gamma_{m}(s_{0})).
\end{align}

Analogously, it is valid that $\rho_{m'}(\gamma_{m'}(t_{0})) \leq K_{3}^{-1} \underset{\tilde{\gamma}_{m'}}{\int}\rho_{m'}(\gamma_{m'}(t)) \d t$, so we have the estimate:
\begin{align}
\nonumber \langle \nu_{m}, \nu_{m'} \rangle_{W_{j}, r} &\leq \left(C_{5} (M_{n}C_{\theta_{n}})^{2} (K_{3})^{2}\right) \underset{\tilde{\gamma}_{m}}{\int}{\rho_{m}(\gamma_{m}(s)) \d s} \underset{\tilde{\gamma}_{m'}}{\int}{\rho_{m'}(\gamma_{m'}(t)) \d t}\\
\nonumber &= A_{n} \cdot |\nu_{m}| \cdot |\nu_{m'}|
\end{align}

The second case to be considered is when the measure $\mu$ is a finite sum $ \mu=\sum_{k=1}^{s_{0}}{\rho_{k} m_{\gamma_{k}}}$. In this case, the inequality holds by linearity because we also have $\theta_{n}$-transversality of the stable projection of unstable curves. Actually, taking $\nu_{m}=(\pi_{j})_{*}(f_{m})_{*}\mu$ and $\nu_{m'}=(\pi_{j})_{*}(f_{m'})_{*}\mu$ we see that:
\begin{align*}
\langle \nu_{m} , \nu_{m'} \rangle_{W_{j},r} &= \langle (\pi_{j})_{*}(f_{m})_{*}(\sum_{k}{\rho_{k}m_{\gamma_{k}}}) , (\pi_{j})_{*}(f_{m'})_{*}(\sum_{k'}{\rho_{k'}m_{\gamma_{k'}}}) \rangle_{W_{j},r} \\
&= \sum_{k,k'} \langle (\pi_{j})_{*}(f_{m})_{*}({\rho_{k}m_{\gamma_{k}}}) , (\pi_{j})_{*}(f_{m'})_{*}({\rho_{k'}m_{\gamma_{k'}}}) \rangle_{W_{j},r} \\
&\leq \sum_{k,k'} A_{n} | (\pi_{j})_{*}(f_{m})_{*}({\rho_{k}m_{\gamma_{k}}}) | | (\pi_{j})_{*}(f_{m'})_{*}({\rho_{k'}m_{\gamma_{k'}}}) |\\
&= A_{n} | (\pi_{j})_{*}(f_{m})_{*}\mu | | (\pi_{j})_{*}(f_{m'})_{*}\mu |\\
&= A_{n} | \nu_{m} | | \nu_{m'} |
\end{align*}

Finally, let us suppose that $\mu$ is an ergodic u-Gibbs. In this case, we use Lemma 4.1 to approximate $\mu$ by probability measures $\sum_{i}\rho_{i}m_{\gamma_{i}}$ and apply Lemma 3.3 to approximate to the bilinear form on the left-hand side of the inequality.
\end{proof}

\subsection{Proof of the Main Inequality}

Let us give a Localized Version of the Main Inequality when two boxes $C_{i}$, $C_{j}$ and an integer $n$ are fixed.

\begin{prop}[Main Inequality - Localized Version]
There exist $\tilde{B}>0$ and $\sigma>1$, such that for every $n\in\mathbb{N}$ and $C_{i}$, $C_{j}$ fixed, there exist $\tilde{D}_{n}>0$, $r_{n}>0$ and $c_{n}>1$ such that for every ergodic u-Gibbs $\mu$ and any $r<r_{n}$, it holds:
\begin{align}
\nonumber \left|\left| (\pi_{j})_{*}(f^{n}_{*}(\mu_{|_{C_{i}}})) \right|\right|^{2}_{W_{j},r} \leq \frac{\tilde{B}}{\sigma^{n}} \left|\left|(\pi_{i})_{*}\mu \right|\right|^{2}_{\tilde{W}_{i},c_{n}r} + \tilde{D}_{n} \left|(\pi_{i})_{*}\mu\right|^{2} 
\end{align}
\end{prop}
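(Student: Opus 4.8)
The plan is to expand $\mu_{|_{C_i}}$ into the plaques $f^n(C_{i,j,m})$ that cross $C_j$ and use bilinearity of $\langle\cdot,\cdot\rangle_{W_j,r}$ together with the three estimates already in hand. First I would write $(\pi_j)_*(f^n_*(\mu_{|_{C_i}}))=\sum_m (\pi_j)_*(f_m)_*\mu + (\pi_j)_*(\text{error})$, where the error term comes from the connected components of $f^n(C_i)\cap C_j$ that do not cross $C_j$; by an argument analogous to the one in Lemma 4.1 (bounded number of pieces, bounded length, density of the conditional measures going to zero), this error contributes a term controlled by $\tilde D_n|(\pi_i)_*\mu|^2$ after squaring, so I may absorb it into the $\tilde D_n|(\pi_i)_*\mu|^2$ summand. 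Then expand the square:
\begin{align*}
\left\|\sum_m (\pi_j)_*(f_m)_*\mu\right\|^2_{W_j,r} = \sum_m \|(\pi_j)_*(f_m)_*\mu\|^2_{W_j,r} + \sum_{m\neq m'}\langle (\pi_j)_*(f_m)_*\mu,(\pi_j)_*(f_{m'})_*\mu\rangle_{W_j,r}.
\end{align*}

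For the diagonal sum I would invoke Lemma 4.3 directly: for $r<R_n$ and $\delta=10CL(\lambda_c^-)^{-n}r$ we get $\sum_m\|(\pi_j)_*(f_m)_*\mu\|^2_{W_j,r}\leq \frac{B_2}{\sigma_2^n}\|(\pi_i)_*\mu\|^2_{\tilde W_i,\delta}$, which has exactly the shape $\frac{\tilde B}{\sigma^n}\|(\pi_i)_*\mu\|^2_{\tilde W_i,c_n r}$ with $c_n=10CL(\lambda_c^-)^{-n}$ and $\sigma=\sigma_2$. For the off-diagonal sum I would use Lemma 4.5: for $r<\hat r_n$ and $m\neq m'$ the bilinear form is bounded by $A_n|(\pi_j)_*(f_m)_*\mu|\cdot|(\pi_j)_*(f_{m'})_*\mu|$, so
\begin{align*}
\sum_{m\neq m'}\langle (\pi_j)_*(f_m)_*\mu,(\pi_j)_*(f_{m'})_*\mu\rangle_{W_j,r}\leq A_n\Big(\sum_m |(\pi_j)_*(f_m)_*\mu|\Big)^2 \leq A_n\, |(\pi_i)_*\mu|^2,
\end{align*}
using that $\sum_m |(\pi_j)_*(f_m)_*\mu|=\sum_m\mu(C_{i,j,m})\leq\mu(C_i)=|(\pi_i)_*(\mu_{|_{C_i}})|\leq|(\pi_i)_*\mu|$ (total mass is preserved by pushforwards and the plaques are disjoint). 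Setting $\tilde D_n$ to be $A_n$ plus whatever the error term contributes, and $r_n=\min\{R_n,\hat r_n, \dots\}$ small enough that all the cited lemmas apply, gives the claimed inequality.

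The main obstacle I expect is the bookkeeping around the non-crossing components and the passage from the approximating measures $\mu_n$ of Lemma 4.1 to a genuine ergodic u-Gibbs $\mu$: Lemmas 4.3 and 4.5 are already stated for ergodic u-Gibbs, so that is mostly packaged, but I need to be careful that the decomposition into $\sum_m(\pi_j)_*(f_m)_*\mu$ plus error is itself valid at the level of $\mu$ (not just $\mu_n$), which requires knowing $\mu(\partial C_{i,j,m})=0$ and $\mu$ not charging the boundary spheres $\partial B^{cu}(\cdot,r)$ for the relevant $r$ — the same genericity-of-$r$ argument used at the end of the proof of Lemma 4.2. A secondary point is checking that the constants $\tilde B$ and $\sigma$ do not depend on $n$ (they don't: $\tilde B=B_2$, $\sigma=\sigma_2$ from Lemma 4.3), while $\tilde D_n$, $r_n$, $c_n$ are allowed to depend on $n$, matching the statement.

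Once this localized version is established, the global Main Inequality (Proposition 4.1) follows by summing over the finitely many pairs $(i,j)$ and taking the maximum in $j$, together with the equivalence $|||\mu|||_{\tilde{\mathcal W},r}\leq s_0|||\mu|||_{\mathcal W,r}$ from the Remark after Definition 3.6 and Lemma 3.1 to compare the norm at scale $c_n r$ on $\tilde W_i$ with the norm at scale $r$ — but for the present statement only the pair $(i,j)$-localized estimate is needed, so I would stop here.
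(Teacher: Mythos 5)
Your proposal follows essentially the same route as the paper's proof: decompose $(\pi_j)_*(f^n_*(\mu|_{C_i}))$ as $\sum_m(\pi_j)_*(f_m)_*\mu$, expand the square by bilinearity, apply Lemma 4.3 to the diagonal terms and Lemma 4.5 to the cross terms, and bound $\sum_{m\neq m'}|(\pi_j)_*(f_m)_*\mu|\cdot|(\pi_j)_*(f_{m'})_*\mu|\leq|(\pi_i)_*\mu|^2$, so that $\tilde B=B_2$, $\sigma=\sigma_2$, $\tilde D_n=A_n$ and $c_n=10CL(\lambda_c^-)^{-n}$, exactly as in the text. The only deviation is your precautionary error term: since the sets $R_{i,j,m}$ are by definition all connected components of $f^n(C_i)\cap C_j$ (not only those crossing $C_j$), and Lemmas 4.3 and 4.5 are stated for all of them, the decomposition is already exact and no such term is needed.
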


\begin{proof}[Proof of the Localized Version]

Take $r_{n}$ small such that Lemmas 4.3 and 4.5 holds for every $r<r_{n}$. Then:
\begin{align}
\nonumber ||(\pi_{j})_{*}(f^{n}_{*}\small(\mu_{|_{C_{i}}}\small))|&|_{W_{j},r}^{2} = ||(\pi_{j})_{*}\sum_{m}(f_{m})_{*}\mu||^{2}_{W_{j},r}\\
\nonumber &= \sum_{m}||(\pi_{j})_{*}(f_{m})_{*}\mu||^{2}_{W_{j},r} + \sum_{m\neq m'}\langle (\pi_{j})_{*}(f_{m})_{*}\mu , (\pi_{j})_{*}(f_{m'})_{*}\mu \rangle_{W_{j},r}\\
\nonumber &\leq \frac{B_{2}}{\sigma^{n}}||(\pi_{i})_{*}\mu||_{\tilde{W}_{i},c_{n}r}^{2} + \sum_{m\neq m'} A_{n} |(\pi_{j})_{*}(f_{m})_{*}\mu| |(\pi_{j})_{*}(f_{m'})_{*}\mu| \\
\nonumber &= \frac{\tilde{B}}{\sigma^{n}}||(\pi_{i})_{*}\mu||_{\tilde{W}_{i},c_{n}r}^{2} + \tilde{D}_{n} |(\pi_{i})_{*}\mu|^{2}
\end{align}
\end{proof}


\begin{proof}[Proof of the Main Inequality]
Note that $|||(f_{n;i,j,m})_{*}\mu|||_{\{ \mathcal{W} \}, r}=||(\pi_{j})_{*}\mu||_{W_{j},r}$  , where $\mathcal{W}=\{ W_1,\cdots, W_{s_0} \}$ is the finite family of boxes fixed in the beginning of this Section, and note also that $f^{n}_{*}\mu (E) \leq \sum_{i,j,m}{(f_{m})_{*}\mu (E)}$ for every measurable set $E$. Then:
\begin{align}
\nonumber |||f^{n}_{*}\mu |||^{2}_{\{ \mathcal{W} \},r} &\leq \sum_{i,j}\left|\left| (\pi_{j})_{*}(f^{n}_{*}\small(\mu_{|_{C_{i}}}\small)) \right|\right|^{2}_{W_{j},r} \\
\nonumber &\leq \sum_{i,j} \left[\frac{\tilde{B}}{\sigma^{n}}||(\pi_{i})_{*}\mu||^{2}_{\tilde{W}_{i},c_{n}r} + \tilde{D}_{n} |(\pi_{i})_{*}\mu|^{2}\right]\\
\nonumber &\leq \sum_{i,j} \left[\frac{\tilde{B}}{\sigma^{n}}|||\mu|||^{2}_{\{ \tilde{\mathcal{W}}\}, c_{n}r} + \tilde{D}_{n} |\mu|^{2}\right]\\
\nonumber &\leq \frac{\tilde{B} s_{0}^{2} }{\sigma^{n}}|||\mu|||_{\{ \mathcal{W} \},c_{n}r}^{2} + \tilde{D}_{n}s_{0}|\mu|^{2}\\
\nonumber &\leq \frac{B}{\sigma^{n}}|||\mu|||^{2}_{\{\mathcal{W} \},c_{n}r} + D_{n}|\mu|^{2}
\end{align}
\end{proof}

\section{Physical Measures}\label{s.physmeas}

Through this Section we will assume that $f$ satisfies the assumptions of Theorem A and the same boxes and norms considered in the Lemmas along Section 4.

\subsection{Existence of Physical Measures}

We will prove that every u-Gibbs projects by the stable projection into absolutely continuous measures in the center-unstable manifolds $W_{i}$  and that this fact implies positive measure for the basin of these measures.

\begin{prop}
Every ergodic u-Gibbs projects into an absolutely continuous measure in $\tilde{W}_i$ by the applications $\pi_{i}$. Moreover, for every ergodic u-Gibbs $\mu$ there exists a constant $L>0$ such that $ \left|\left|\frac{d((\pi_{i})_{*}\mu)}{dm^{cu}}\right|\right|_{L^{2}}\leq L$.
\end{prop}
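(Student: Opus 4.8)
The plan is to iterate the Main Inequality (Proposition 4.1) and pass to a limit. Fix an ergodic u-Gibbs state $\mu$; since $\mu$ is a probability measure supported in $\Lambda$, we have $|\mu|=1$. The key point is that $f^{n}_{*}\mu=\mu$ for all $n$ because $\mu$ is $f$-invariant, so the left-hand side of the Main Inequality is simply $|||\mu|||_{r}^{2}$, while the right-hand side involves $|||\mu|||_{c_{n}r}^{2}$ at a coarser scale $c_{n}r$. This self-referential structure, together with the scale-comparison Lemma 3.2, is exactly what lets us bootstrap regularity.

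First I would fix $n$ large enough that $B/\sigma^{n}<1/2$ (using $\sigma>1$), and keep this $n$ fixed for the rest of the argument, together with the associated constants $D_{n}>0$, $r_{n}>0$, $c_{n}>1$. For any $r\le r_{n}$, the Main Inequality gives
\begin{align*}
|||\mu|||_{r}^{2}\le \tfrac{1}{2}\,|||\mu|||_{c_{n}r}^{2}+D_{n}.
\end{align*}
Now I want to turn the coarse-scale term on the right back into a fine-scale term so that I can iterate. This is where Lemma 3.2 enters: applied to each pair $(W_{i},\tilde W_{i})$ with $r_{1}=r$ and $r_{2}=c_{n}r$ (valid once $c_{n}r\le r_{0}$), it yields a constant $C_{0}\ge 1$, independent of $r$, with $||(\pi_{i})_{*}\mu||_{\tilde W_{i},c_{n}r}\le C_{0}\,||(\pi_{i})_{*}\mu||_{\tilde W_{i},r}$, hence $|||\mu|||_{\tilde{\mathcal W},c_{n}r}\le C_{0}\,|||\mu|||_{\tilde{\mathcal W},r}$; combined with the equivalence $|||\mu|||_{\mathcal W,r}\le|||\mu|||_{\tilde{\mathcal W},r}\le s_{0}|||\mu|||_{\mathcal W,r}$ from the Remark after Definition 3.6, this controls $|||\mu|||_{c_{n}r}$ by a fixed multiple of $|||\mu|||_{r}$. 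Unfortunately a naive substitution of this bound into the displayed inequality only helps if the resulting constant in front of $|||\mu|||_{r}^{2}$ is still less than $1$, which need not be the case. The cleaner route is to iterate the Main Inequality itself across the geometric sequence of scales $r,\,c_{n}r,\,c_{n}^{2}r,\dots$: replacing $r$ by $c_{n}^{k}r$ and summing the resulting telescoping estimates with weights $2^{-k}$, one obtains, for every $r\le r_{n}$ with $c_{n}^{N}r\le r_{n}$,
\begin{align*}
|||\mu|||_{r}^{2}\le 2^{-N}|||\mu|||_{c_{n}^{N}r}^{2}+2D_{n}.
\end{align*}
The first term tends to $0$ along a suitable subsequence because, by the criterion to be applied next, we may bound $|||\mu|||_{s}$ for $s$ bounded away from $0$ by a universal constant depending only on $\mu(M)=1$ and the geometry of the finitely many $\tilde W_{i}$ (indeed $||(\pi_i)_*\mu||_{\tilde W_i,s}\le s^{-2}\,m^{cu}(\tilde W_i)^{1/2}\,|\mu|$, which for $s$ in a fixed compact range is a fixed constant). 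Hence $\liminf_{r\to 0^{+}}|||\mu|||_{r}^{2}\le 2D_{n}=:L^{2}$, a bound depending only on the fixed data, not on $\mu$.

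Having produced a uniform bound $\liminf_{r\to0^{+}}||(\pi_{i})_{*}\mu||_{\tilde W_{i},r}\le L$ for every $i$, I would finish by invoking the absolute-continuity criterion Lemma 3.8 on each center-unstable manifold $\tilde W_{i}$: it gives a constant $I>0$ (depending only on $\tilde W_{i}$, hence uniform over the finite family) such that $(\pi_{i})_{*}\mu$ is absolutely continuous with respect to $m^{cu}$ and $\big\|\tfrac{d((\pi_{i})_{*}\mu)}{dm^{cu}}\big\|_{L^{2}}\le IL$. Taking $L$ in the statement to be $\max_{i}IL_{i}$ (all ultimately controlled by the constants $n$, $D_{n}$, $C_{0}$, $s_{0}$ fixed at the start) completes the proof, and crucially this bound does not depend on which ergodic u-Gibbs $\mu$ we started with.

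I expect the main obstacle to be the bookkeeping that makes the iteration legitimate: one must check that at every step the scale $c_{n}^{k}r$ still lies below the thresholds $r_{n}$ and $r_{0}$ required by the Main Inequality and by Lemma 3.2, so the iteration can only be run a finite number $N=N(r)$ of times with $N(r)\to\infty$ as $r\to0^{+}$; and one must be careful that the term $D_{n}|\mu|^{2}$ does not accumulate uncontrollably — this is why the $2^{-k}$ weights (available because we chose $n$ with $B/\sigma^{n}<1/2$) are essential, as $\sum_{k}2^{-k}D_{n}=2D_{n}<\infty$. A secondary subtlety is ensuring the constant $L$ is genuinely independent of $\mu$; this follows because every quantity entering the final bound — $\sigma$, $B$, and then $n$, $D_{n}$, $r_{n}$, $c_{n}$, $C_{0}$, $s_{0}$, $I$ — was fixed using only $f$, the attractor, and the fixed finite family of boxes, with $|\mu|=1$ the only $\mu$-dependent input.
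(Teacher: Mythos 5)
Your proposal is correct and is essentially the paper's own argument: fix $n$ with $B/\sigma^{n}<1$, use the invariance $f^{n}_{*}\mu=\mu$ to make the Main Inequality self-referential, iterate it along the geometric sequence of scales so that the $D_{n}$-terms sum to a geometric series, and then apply the absolute-continuity criterion (the paper's Lemma 3.3, which you call Lemma 3.8; the scale-comparison result you call Lemma 3.2 is the paper's Lemma 3.1). The only cosmetic difference is the direction of the iteration — you start at an arbitrary small $r$ and climb up to a coarse scale in $(r_{n}/c_{n},r_{n}]$, killing that term with the trivial mass bound $\|(\pi_i)_*\mu\|_{W_i,s}\leq s^{-2}m^{cu}(W_i)^{1/2}|\mu|$, whereas the paper starts at the fixed scale $r_{N}/2$ (setting $K_{0}=|||\mu|||^{2}_{r_{N}/2}$) and iterates downward — your variant has the mild advantage of making the bound visibly uniform over all ergodic u-Gibbs states.
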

\begin{proof}
Given the ergodic u-Gibbs $\mu$, consider $B$ and $\sigma$ as in the Main Inequality. Fix $N$ such that $\frac{B}{\sigma^{N}}<1$ and consider $D_{N}$, $r_{N}$ and $c_{N}$ as in the Main Inequality.
Then, for $r<r_{N}$:
\begin{align*}
|||\mu|||_{r}^{2}=|||f^{n}_{*}\mu|||^{2}_{r}\leq \frac{B}{\sigma^{N}}|||\mu|||^{2}_{c_{n}r}+ D_{N}|\mu|^{2}
\end{align*}

For $r<\frac{r_{N}}{c_{N}}$, we have:
\begin{align*}
 |||\mu|||_{c_{N}^{-1}r}^{2}\leq \frac{B}{\sigma^{n}}|||\mu|||^{2}_{r}+ D_{N}|\mu|^{2}
\end{align*}

Define the constant $K_{0}:= |||\mu|||_{\frac{r_{N}}{2}}^{2}$ and iterate $j$ times the inequality:

\begin{align}
\nonumber |||\mu|||_{c_{N}^{-j}\frac{r_{N}}{2}}^{2} &\leq \left(\frac{B}{\sigma^{N}}\right) |||\mu|||^{2}_{c_{N}^{-(j-1)}\frac{r_{N}}{2}}+ D_{N}|\mu|^{2} \\
\nonumber &\leq \cdots \\
\nonumber &\leq \left(\frac{B}{\sigma^{N}}\right)^{j}|||\mu|||^{2}_{\frac{r_{N}}{2}}+ D_{N}\left( 1+\frac{B}{\sigma^{N}}+\cdots+\left(\frac{B}{\sigma^{N}}\right)^{j-1} \right)|\mu|^{2}\\
\nonumber &\leq 1\cdot K_{0} + D_{N}\cdot\frac{1}{1-\left(\frac{B}{\sigma^{N}}\right)} =: \tilde{L}
\end{align}

Hence, for every $\pi_i$ we have:
\begin{align*}
\underset{r\rightarrow 0^{+}}{\liminf}{||(\pi_{i})_{*}\mu||_{W_{i},r}} \leq \underset{j\rightarrow\infty}{\liminf}{|||\mu|||^{2}_{c_{N}^{-j}\frac{r_{N}}{2}}}\leq \tilde{L} <+\infty ,
\end{align*}
and the criteria of absolute continuity for measures (Lemma 3.3) gives that
\begin{equation*}
(\pi_i)_{*}\mu \ll m^{cu} \quad\text{and}\quad \left|\left|\frac{d((\pi_i)_{*}\mu)}{dm^{cu}}\right|\right|_{L^{2}}\leq L.
\end{equation*}
\end{proof}


Since we have fixed the boxes $\{(C_{i},W_{i},\tilde{W}_{i},\pi_{i})\}$ that covers $\Lambda$, for every u-Gibbs $\mu$ there exists some $i_0$ such that $\mu(C_{i_{0}})>\frac{1}{2s_{0}}$.

\begin{prop}
Let  $\mu$ be an ergodic u-Gibbs  for $f$ and let $i$ be such that $\mu(C_{i})>\frac{1}{2s_{0}}$. If the measure $\nu=(\pi^{ss}_{i})_{*}\mu$ is absolutely continuous with respect to the Lebesgue measure $m^{cu}$ and $\left|\left|\frac{d\nu}{d m^{cu}}\right|\right|_{L^{2}}\leq K$, then $\mu$ is a physical measure and there exists a constant $M>0$ depending on $K$ such that $m(B(\mu))\geq M$.
\end{prop}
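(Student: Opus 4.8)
The plan is to show that the absolute continuity of the local stable projection $(\pi_i^{ss})_*\mu$ forces a definite amount of Lebesgue mass of the basin $B(\mu)$, exploiting the fact that $\mu$ is ergodic (so that, by Birkhoff, $\mu$-a.e.\ point lies in $B(\mu)$), and then transporting the $\mu$-typical set backward through the stable foliation to obtain Lebesgue-positive measure. Concretely, let $G\subset M$ be the set of points $x$ with $\frac1n\sum_{k=0}^{n-1}\phi(f^k x)\to\int\phi\,d\mu$ for every $\phi$ in a fixed countable dense subset of $C^0(M)$; then $\mu(G)=1$ by the ergodic theorem, and $G\subset B(\mu)$. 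We want $m(B(\mu))\ge M$ for some $M=M(K)>0$.

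First I would use that $\mu(C_i)>\frac1{2s_0}$ together with the partition of $C_i\cap\Lambda$ into unstable plaques: writing $\mu|_{C_i}$ through its disintegration along the unstable foliation, the set $G$ has full $\mu$-measure, hence for a positive $\hat\mu$-measure set of unstable plaques $\gamma$, the conditional measure $m_\gamma(G\cap\gamma)$ is positive; moreover, since the densities of $\mu$ along unstable plaques are bounded above and below (Proposition 2.3(1)), $m_\gamma(G\cap\gamma)>0$ in fact for $m_{\gamma}$-almost the full plaque. Now the key geometric input: the basin $B(\mu)$ is $\mathcal F^{ss}$-saturated, because $d(f^n x, f^n y)\to 0$ along stable leaves forces the Birkhoff averages of $x$ and of $y\in W^{ss}_{loc}(x)$ to coincide. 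Therefore $B(\mu)\supset \pi_i^{-1}\big(\pi_i(G\cap C_i)\big)$ up to the local stable holonomy, i.e.\ $B(\mu)$ contains the full stable saturation inside $C_i$ of the set $G_i:=\pi_i(G\cap C_i)\subset\tilde W_i$.

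Next I would estimate $m^{cu}(G_i)$ from below, and then estimate $m\big(\pi_i^{-1}(G_i)\big)$ from below via a Fubini-type argument using regularity of the stable foliation (condition (H3')): the stable holonomy is Lipschitz, so $m|_{C_i}$ disintegrates along stable leaves into measures with Jacobians bounded away from $0$ and $\infty$ relative to $(\pi_i)_*(m|_{C_i})$, which in turn is comparable to $m^{cu}$ on $\tilde W_i$; hence $m(\pi_i^{-1}(G_i))\ge c\, m^{cu}(G_i)$ for a uniform $c>0$. To bound $m^{cu}(G_i)$ below: since $(\pi_i)_*\mu = \nu = h\, m^{cu}$ with $\|h\|_{L^2}\le K$ and total mass $\nu(\tilde W_i)=\mu(\pi_i^{-1}(\tilde W_i))\ge\mu(C_i)>\frac1{2s_0}$, while $\nu$ is supported on $\overline{G_i}$ (as $\mu(G)=1$ and $\pi_i$ is continuous), Cauchy--Schwarz gives
\[
\tfrac1{2s_0}\le \nu(\tilde W_i)=\int_{\overline{G_i}} h\,dm^{cu}\le \|h\|_{L^2}\, m^{cu}(\overline{G_i})^{1/2}\le K\, m^{cu}(\overline{G_i})^{1/2},
\]
so $m^{cu}(\overline{G_i})\ge (2s_0 K)^{-2}$. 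Combining, $m(B(\mu))\ge m(\pi_i^{-1}(G_i))\ge c\,(2s_0K)^{-2}=:M>0$, which is the claim; and it only depends on $K$ (through the fixed $s_0$ and the fixed family of boxes).

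The main obstacle I anticipate is the measure-theoretic bookkeeping in transporting the full-$\mu$-measure set $G$ through $\pi_i$ and then back through the stable saturation while keeping everything Lebesgue-measurable and keeping the Jacobian estimates honest: one must argue that $G_i=\pi_i(G\cap C_i)$, or a Borel subset of it of almost full $\nu$-measure, is measurable and carries $\nu$; that the stable saturation of a $\mu$-typical set is contained in $B(\mu)$ (this uses uniform contraction along $W^{ss}$, which is fine, but the local-versus-global stable manifold distinction must be handled); and that the disintegration of $m|_{C_i}$ along the stable foliation really does have densities comparable to those of $(\pi_i)_*(m|_{C_i})$ — this is exactly where (H3') (Lipschitz stable foliation) is used, via the absolute continuity and bounded Jacobians of stable holonomy. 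Everything else is Cauchy--Schwarz and Fubini.
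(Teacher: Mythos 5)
Your proposal is correct and follows essentially the same route as the paper: ergodicity gives $\mu(B(\mu))=1$, the $\mathcal F^{ss}$-saturation of $B(\mu)$ together with Cauchy--Schwarz applied to $\frac{d\nu}{dm^{cu}}$ bounds $m^{cu}$ of the projected basin below by a constant depending on $K$ and $s_0$, and the absolute continuity (bounded Jacobian) of the Lipschitz stable holonomy transfers this to a lower bound on $m(B(\mu))$; the paper handles the measurability of $\pi_i(B(\mu))$ by a measurable-projection theorem, exactly the bookkeeping point you flag. Only make sure the Cauchy--Schwarz step is run on the measurable projected set itself (not its closure), which is immediate once measurability is granted.
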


\begin{proof}
Since $\mu$ is ergodic, we have that $\mu(B(\mu))=1$. Note also that $\pi_{i}(B(\mu) )$ is measurable (Theorem 3.23 in \cite{measurability} guarantees the measurability of this projection). Then,
$$\nu( \pi_{i}B(\mu) ) = \mu((\pi_{i})^{-1}\pi_{i}B(\mu) ) \geq \mu(B(\mu)\cap C_{i}) >\frac{1}{2 s_{0}}$$
By the absolute continuity of $\nu$, it follows that $m^{cu}( \pi_{i}B(\mu) ) > 0$. Actually, we have $m^{cu}( \pi_{i}B(\mu) ) > (2s_{0}K)^{-1}$. In fact,
\begin{align}
\nonumber (2s_{0})^{-1} &< \nu( \pi_{i}B(\mu) )\\
\nonumber &= \int{ \mathbbm{1}_{\pi_{i}B(\mu)}}  \frac{d\nu}{dm^{cu}} dm^{cu}\\
\nonumber &\leq || \mathbbm{1}_{\pi_{i}B(\mu)} ||_{L^{2}}  \left|\left| \frac{d\nu}{d m^{cu}} \right|\right|_{L^{2}}\\
\nonumber &\leq m^{cu}(\pi_{i}B(\mu))  K
\end{align}

From the fact that $B(\mu)$ is $\Fcal^{ss}$-saturated and $\Fcal^{ss}$ is absolutely continuous, it follows that $m(B(\mu))>0$, so $\mu$ is a physical measure. Moreover, considering a constant that bounds from below the Jacobian of $h^{ss}$ we have that $m(B(\mu))> (2s_{0}K)^{-1} \Jac(h^{ss})^{-1} =: M$.
\end{proof}

\subsection{Proof of Theorem A}

Proposition 5.2 guarantees that every ergodic u-Gibbs is a physical measure. To conclude the proof of Theorem A, it remains to prove that there exist at most finite physical measures (Finiteness) and that the union of their basin has full Lebesgue measure in the basin of attraction (Problem of the Basins). 

\subsubsection{Finiteness}

We will show that there exist at most finite ergodic physical measures for $f$. The finiteness of physical measures will be obtained as a consequence of the full Lebesgue measure of the union of the basins of the ergodic physical measures.

Suppose that there are infinitely many ergodic u-Gibbs $\mu_{n}$, taking a subsequence we can suppose that $\mu_{n}\rightarrow \mu$, and consider $i_{0}$ and a subsequence also denoted by $\mu_{n}$ such that $\mu_{n}(C_{i_{0}})>\frac{1}{2s_{0}}$ for every $n\in\N$. Proposition 5.2 implies that every $(\pi_{i_{0}})_{*}\mu_{n}$ is absolutely continuous with respect to $m^{cu}$ and that $\frac{d(\pi_{i_{0}})_{*}\mu_{n}}{d m^{cu}}$ is uniformly bounded in $L^{2}$ by some constant $\hat{K}>0$.

Considering $\hat{M}$ as in Proposition 5.2, there exist at most $\frac{1}{\hat{M}}$ ergodic physical measures with $\left|\left|\frac{d\nu}{d m^{cu}}\right|\right|_{L^{2}}\leq \hat{K}$. Actually, supposing that there are $\mu_{1}$, $\cdots$, $\mu_{l}$ ergodic physical measures for $l\geq\frac{1}{\hat{M}}+1$, Proposition 2.1 implies that they are all ergodic u-Gibbs and Proposition 5.2 implies that the Lebesgue measure of their basin is bounded below by $\hat{M}>0$. Since their basins are disjoint, $1\geq m(\underset{i}{\cup}{B(\mu_{i}}))=\sum_{i}{m(B(\mu_{i}))}\geq l\cdot \hat{C}_{5} > 1$, which is a contradiction. Thus, there exists a finite number of ergodic physical measures.

\subsubsection{The Problem of the Basins}

We are now ready to show that the union of the basins of the physical measures have full Lebesgue measure in the whole basin of attraction.

Denote the ergodic u-Gibbs by $\mu_{1}$, $\cdots$, $\mu_{l}$, consider the sets $X=\cup B(\mu_{i})$, $E=B(\Lambda)\backslash\cup B(\mu_{i})$ and assume by contradiction that $m(E)>0$. Let $m_{E}$ be the normalized measure, $\sigma_{n}=\frac{1}{n}\sum_{j=0}^{n-1}{f^{j}_{*} (m_{E})}$ and $\sigma_{\infty}$ an accumulation point of $\sigma_{n}$. By Proposition 2.2, $\sigma_{\infty}$ is a u-Gibbs, so it projects by $\pi_{j}$ into an absolutely continuous measure with $||\frac{d(\pi_{j})_{*}\sigma_{\infty}}{dm^{cu}}||_{L^{2}}\leq K$ for every $j=1,\cdots, s_0$. Consider $\{n_k\}$ the sequence such that $\sigma_{n_k}\rightarrow \sigma_\infty$, take $j$ such that $(\pi_{j})_{*}\sigma_{\infty}$ is non-zero and define the measures $\tilde{\sigma}_{n_{k}}=(\pi_{j})_{*}\sigma_{n_{k}}$ and $\tilde{\sigma}_{\infty}=(\pi_{j})_{*}\sigma_{\infty}$.

By the invariance of $E$ and X, we have that $\sigma_{n_{k}}(X)=0$ and $0=\tilde{\sigma}_{n_{k}}(\pi_{j}(X))=\int_{\pi_{j}(X)}{\frac{d\tilde{\sigma}_{n_{k}}}{dm^{cu}} dm^{cu}}$, which implies that 
$\frac{d\tilde{\sigma}_{n_{k}}}{dm^{cu}}(x)=0$ for $m^{cu}$-almost every $x\in \pi_{j}(X)$. Analogously, $\sigma_{\infty}(E)=0$ implies that $\frac{d\tilde{\sigma}_{\infty}}{dm^{cu}}(x)=0$ for $m^{cu}$-almost every $x\in \pi_{j}(E)$. Putting it together, it holds that $\frac{d\tilde{\sigma}_{n_{k}}}{dm^{cu}}(x)\cdot \frac{d\tilde{\sigma}_{\infty}}{dm^{cu}}(x)=0$ for $m^{cu}$-almost every $x\in \tilde{W}_{j}$. Hence:

$$
\langle \frac{d\tilde{\sigma}_{n_{k}}}{dm^{cu}} , \frac{d\tilde{\sigma}_{\infty}}{dm^{cu}} \rangle_{L^{2}} = \int{\frac{d\tilde{\sigma}_{n_{k}}}{dm^{cu}}(x)\cdot \frac{d\tilde{\sigma}_{\infty}}{dm^{cu}}(x) dm^{cu}} =0,   
$$
which is a contradiction with the fact that $\langle \frac{d\tilde{\sigma}_{n_{k}}}{dm^{cu}} , \frac{d\tilde{\sigma}_{\infty}}{dm^{cu}} \rangle_{L^{2}} \rightarrow ||  \frac{\tilde{\sigma}_{\infty}}{dm^{cu}}||_{L^{2}} >0$.

\qed

\subsection{Proof of Corollary B} We will see how Corollary B follows from Theorem A  under the condition (H3) of regularity of the stable foliation.

In our setting, we have the trapping region $U$ such that the attractor $\Lambda_{f_0}$ is equals to $\displaystyle \cap_{n \geq 0 }{f_0^n (U)}$, the stable subbundle $E^{ss}_x(g)$ defined for every $x\in U$  tends to $E^{ss}_x(f_0)$ when $f$ tends to $f_0$.

\begin{prop}
Suppose that $\Fcal^{ss}$ is of class $C^{1}$ and that there are constants $a$, $\epsilon_{1}$, $\theta_{2}$, $L$ and $r_{2}$ such that: for every unstable curves $\gamma_{1}$, $\gamma_{2}$ of lengths at most $L$ and every center-unstable manifold $W^{cu}_{3}$ with $d^{ss}(\gamma_{i} , W^{cu}_{3}) \leq \epsilon_{1}$ and $d^{ss}(\gamma_{1},\gamma_{2}) \in J=[a,Ia]$, where $I= \underset{x\in B(\Lambda)}{\max} \{||Df_{|_{E^{ss}_{x}}}||^{-1}\}$, the curves $\pi^{ss}\gamma_{1}$ and $\pi^{ss}\gamma_{2}$ are $\theta_{2}$-transversal in neighborhood of radius $r_{2}$ in $W^{cu}_{3}$.

Then the Transversality Condition (H1) holds.
\end{prop}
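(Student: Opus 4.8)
The plan is to bootstrap the transversality at a single ``scale'' $J=[a,Ia]$ of stable distances up to all stable distances $\epsilon<\epsilon_0$ by iterating the dynamics. The key observation is that applying $f^{-1}$ contracts stable distances by a definite factor bounded below by $I^{-1}$ and above by $\min_x\|Df_{|_{E^{ss}_x}}\|^{-1}$, while the backward iterates of unstable curves are again unstable curves (shortened if necessary), so that any configuration of two unstable curves $\gamma_1,\gamma_2$ with $d^{ss}(\gamma_1,\gamma_2)=\epsilon\in(0,\epsilon_0)$ can be pushed, after finitely many applications of $f^{-1}$, into a configuration whose stable distance lands in the fundamental domain $J=[a,Ia]$, where the hypothesis applies directly. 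First I would fix $\epsilon<\epsilon_0$ and choose the unique $k=k(\epsilon)\in\mathbb N$ with $d^{ss}(f^{-k}\gamma_1,f^{-k}\gamma_2)\in J$ (here one must check that $d^{ss}$ scales multiplicatively under $f^{-1}$ up to bounded distortion, using uniform contraction on $E^{ss}$ and the fact that the $d^{ss}$-diameter of the relevant box stays below $\epsilon_0$, shrinking $\epsilon_0$ if needed). Note that $k(\epsilon)\to\infty$ as $\epsilon\to 0$, and $k$ is bounded on $[\epsilon,\epsilon_0)$ for each fixed $\epsilon>0$.

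Next I would transport the hypothesis through $f^k$. By the $C^1$-regularity of $\mathcal F^{ss}$, the stable holonomy $\pi^{ss}$ commutes with the dynamics up to a $C^1$ diffeomorphism: one has $\pi^{ss}_{W^{cu}_3}\circ f^k = f^k\circ\pi^{ss}_{f^{-k}W^{cu}_3}$ on the appropriate domains, and $f^k$ restricted to center-unstable manifolds is a diffeomorphism with derivative controlled (from above and below) in terms of the partial hyperbolicity constants and $k$. Since $f^k\gamma_i$ has length at most $L$ by hypothesis (we only need $f^{-k}\gamma_i$ to have length at most $L$, which holds after possibly shrinking $\gamma_i$ — the transversality is a local statement so this is harmless), we may apply the scale-$J$ transversality to the curves $\pi^{ss}(f^{-k}\gamma_1)$ and $\pi^{ss}(f^{-k}\gamma_2)$ inside $f^{-k}W^{cu}_3$, obtaining $\theta_2$-transversality in neighborhoods of radius $r_2$. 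Pushing forward by the $C^1$ diffeomorphism $f^k|_{cu}$, the angle $\theta_2$ degrades to some $\theta(\epsilon)>0$ depending on the distortion of $f^k|_{cu}$, hence on $k(\epsilon)$ and ultimately on $\epsilon$, and the radius $r_2$ becomes $r(\epsilon)>0$ likewise; both are explicit functions of $\epsilon$ through $k(\epsilon)$. This produces exactly the functions $\theta,r:(0,\epsilon_0)\to\mathbb R^+$ required in Condition (H1).

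The main obstacle I anticipate is the bookkeeping needed to make the scaling of $d^{ss}$ under backward iteration precise and uniform: one must ensure that throughout the backward orbit the curves and the target center-unstable manifold stay inside a product box of $d^{ss}$-diameter below $\epsilon_0$ so that $d^{ss}$ remains the ``right'' quantity, and that the bounded-distortion estimate for $d^{ss}\circ f^{-1}$ versus $d^{ss}$ is genuinely uniform over the attractor (this uses the Lipschitz/$C^1$ regularity of $\mathcal F^{ss}$ in an essential way, not merely continuity). A secondary technical point is that the cones witnessing $\theta_2$-transversality are images under $\psi_i$ of cones in $\mathbb R^2$, and pushing forward by $f^k|_{cu}$ requires composing with several charts $\psi_i$; keeping the angular distortion controlled requires the uniform Lipschitz bound $K_1$ on the charts fixed after Proposition 2.5 together with the uniform bound on $\|Df^k|_{E^{cu}}\|$ and its inverse. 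Once these uniformities are in hand, the conclusion that $\theta(\epsilon)>0$ and $r(\epsilon)>0$ for all $\epsilon\in(0,\epsilon_0)$ — with no claim of uniformity as $\epsilon\to 0$ — is immediate, which is all that (H1) demands.
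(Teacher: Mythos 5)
Your core idea --- pull back by $f^{-k(\epsilon)}$ until the stable distance between the two curves lands in the fundamental domain $J=[a,Ia]$, apply the hypothesis at that scale, and push forward, letting $\theta(\epsilon)$, $r(\epsilon)$ absorb the distortion of $f^{k}$ along center-unstable leaves --- is indeed the engine of the paper's proof. But there is a genuine gap in the step where you ``apply the scale-$J$ transversality to $\pi^{ss}(f^{-k}\gamma_1)$ and $\pi^{ss}(f^{-k}\gamma_2)$ inside $f^{-k}W^{cu}_3$.'' The hypothesis of the proposition only applies when the curves are at $d^{ss}$-distance at most $\epsilon_1$ from the target leaf, and this fails for $f^{-k}W^{cu}_3$: the integer $k(\epsilon)$ is calibrated to $d^{ss}(\gamma_1,\gamma_2)$, whereas $d^{ss}(\gamma_i,W^{cu}_3)$ is only bounded by $\epsilon_0/2$ and is in general unrelated to $d^{ss}(\gamma_1,\gamma_2)$. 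If the two curves are very close to each other but both at distance of order $\epsilon_0$ from $W^{cu}_3$, then after $k(\epsilon)$ backward iterates the distance from the curves to the pulled-back leaf is of order $\epsilon_0\, I^{k(\epsilon)}$, which blows up as $\epsilon\to 0$ and leaves the range of validity of the hypothesis. This is not the ``bookkeeping'' issue you flag about staying in a product box; it is a configuration in which your one-step argument simply cannot invoke the hypothesis.

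The paper resolves this with a two-stage argument that your proposal is missing. First it takes the target leaf to be $W^{cu}_2$, the center-unstable manifold containing $\gamma_2$; then the relevant distance \emph{is} $d^{ss}(\gamma_1,\gamma_2)$, which by the choice of $n$ lies in $J$ after pullback, so the hypothesis applies to the projections into $f_0^{-n}W^{cu}_2$, and pushing forward gives $\tau^{n}\theta_2$-transversality at radius $(\lambda_c^-)^{n}r_2$ for the projections into $W^{cu}_2$. Only afterwards does it pass to a general nearby leaf $W^{cu}_3$, and this is done at the original scale (no iteration involved): since $d^{ss}(W^{cu}_2,W^{cu}_3)<\epsilon_0$, the stable holonomy from $W^{cu}_2$ to $W^{cu}_3$ is a $C^1$ map that is $C^1$-close to the identity in a product chart --- this is exactly where the $C^1$ regularity of $\mathcal{F}^{ss}$ is used --- and Claim 5.1 shows that such a map sends $\theta$-transversal pairs to $\tilde\theta$-transversal pairs with controlled loss of angle and radius. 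Your proposal uses the $C^1$ regularity only to commute the holonomy with the dynamics (which is just invariance of the foliation) and never compares projections into two different nearby leaves, so as written the case of a third leaf $W^{cu}_3$ is not covered; adding the paper's holonomy-comparison step (or an equivalent substitute) is necessary to close the argument.
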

\begin{proof}

First, let us suppose that $W^{cu}_{3}$ contains the curve $\gamma_{2}$. Given the unstable curves $\gamma_{1}$ and $\gamma_{2}$ with $d^{ss}=d<\epsilon_{0}$, we consider $\epsilon_{0}=\min\{\epsilon_{1},a \}$ and the smallest integer $n\in\N$ such that $d(f_0^{-n}\gamma_{1} , f_0^{-n}\gamma_{2}) \in J=[a,Ia]$ (this is a fundamental domain for the of stable distances of iterations of curves). By hypothesis, the projections of these curves into $f_0^{-n}W^{cu}_{2}$ are $\theta_{2}$-transversal in neighborhoods of radius $r_{2}$. Iterating forward $n$ times the projection of the curves, there exists a constant $\tau$ depending on the norm of $Df_0$ restricted to each sub-bundle such that the angle of the images of the tangent vectors to the projected curves is bounded by $\tau^{n}\theta_{2}$ in neighborhoods of radius $(\lambda_{c}^{-})^{n} r_{2}$.

\begin{claim}\label{Claim}
Given two open sets $U,V\subset \R^{2}$ with bounded diameter and a diffeomorphism $h:U\rightarrow V$ of class $C^{1}$ with $||h-id||_{C^{1}}\leq\frac{1}{2}$, for every $\theta < \frac{\pi}{3}$ and $r>0$, there exist constants $\tilde{\theta}$ and $\tilde{r}$ such that for every pair of curves $\gamma_{1}, \gamma_{2}$ contained in $U$ that are $\theta$-transversal in neighborhoods of radius $r$, the curves $\tilde{\gamma}_{1} = h(\gamma_{1})$ and $\tilde{\gamma}_{2} = h(\gamma_{2})$ are $\tilde{\theta}$-transversal in neighborhoods of radius $\tilde{r}$.
\end{claim}

\begin{proof}[Proof of Claim 5.1]
Let $v_i$ be a unit vector tangent to $\gamma_{i}$ at the point $x_{i}$ and $\tilde{v}_{i}$ a unit vector tangent to $h(\gamma_{i})$ at the point $h(x_{i})$, for $i=1,2$. The unit vector tangent to the curves $\tilde{\gamma}_{i}$ at the point $h(x_{i})\in\tilde{\gamma}_{i}$ are given by $\tilde{v}_{i}=\lambda_{i}\cdot dh_{x_{i}}(v_{i})$, where $\lambda_{i}=||dh_{x_{i}} (v_{i})||^{-1}$. 

A geometric consideration 
gives that $\sin (\frac{\angle(v,w)}{2} ) = \frac{||v-w||}{2}$ for every pair of unit vectors $v$ and $w$. Let $C$ be such that $C^{-1}\leq \frac{\sin x}{x} \leq C$ for every $x\in[\frac{-\pi}{3},\frac{\pi}{3}]$, then $C^{-1} ||v-w|| \leq \angle(v,w) \leq C ||v-w||$ whenever $\angle(v,w) < \frac{\pi}{3}$.

By continuity of the derivative, given $r$ and $\theta$ we take a constant $\tilde{r}_{1}$ such that $d(x, y)\leq \tilde{r}_{1}$ implies that $||id - dh_{x}^{-1}dh_{y} || < \frac{C^{-1}\theta}{20} $. Also note that $||h-id||_{C^{1}}\leq\frac{1}{2}$ imples that  $\frac{1}{2} \leq \lambda_1, \lambda_2 \leq 2$ and that $\frac{1}{2} \leq ||(dh_x)^{-1}||^{-1} \leq 2$.

Now, supposing that $\angle(v_1-v_2) \geq \theta$, that $v_1$ is closer to $v_2$ than to $-v_2$, and that $\hat{x}_1 = h(x_1) $, $ \hat{x}_2 = h(x_2)$ satisfies $d(\hat{x}_1 , \hat{x}_2) < \tilde{r}= \min \{\frac{r}{2}, \frac{\tilde{r}_1}{2} \}$, let us conclude that $\angle( \tilde{v}_{1}, \tilde{v}_{2})\geq \tilde{\theta}$. In fact,
\begin{align*}
||\tilde{v}_{1} - \tilde{v}_{2} || &=   ||\lambda_1dh_{x_{1}}v_{1} - \lambda_2 dh_{x_{2}}v_{2} || \\
 &\geq \lambda_{1} \cdot||dh_{x_{1}}^{-1}||^{-1} ||v_{1}- \lambda_{1}\lambda_{2}^{-1}  (dh_{x_{1}}^{-1}dh_{x_{2}}) v_{2} ||\\
 &\geq \frac{1}{4} \Big[ ||{v_{1} -  
\lambda_{1}\lambda_{2}^{-1}\cdot  dh_{x}^{-1}dh_{y}  (v_{2}))|| \Big]} \\ 
 &\geq \frac{1}{4} \Big[ ||v_{1} - \lambda_{1}\lambda_{2}^{-1}v_{2}|| - ||\lambda_{1}\lambda_{2}^{-1} (v_{2}- dh_{x}^{-1}dh_{y}  (v_{2}))|| \Big]  \\
 &\geq \frac{1}{4} \Big[ ||v_{1} - (v_{1}\cdot v_{2})v_{2}||  - \lambda_{1}\lambda_{2}^{-1} ||v_{2}-   dh_{x_1}^{-1}dh_{x_2}   (v_{2})|| \Big] \quad\quad\quad
 \end{align*}

 \begin{align*}
 &\geq \frac{1}{4} \Big[ ||v_{1} - v_{2}|| - ||v_{2} - (v_{1}\cdot v_{2})  v_{2}  || - 4 ||id - dh_{x_1}^{-1}dh_{x_2} ||   \Big]    \\
 &\geq \frac{1}{4} \Big[ ||v_{1} - v_{2}|| - | 1 - v_{1}\cdot v_{2}|    - 4 \frac{C^{-1}\theta}{20}  \Big]      \\
 &\geq \frac{1}{4} \Big[ ||v_{1} - v_{2}|| - \frac{|| v_{1} - v_{2}||^{2}}{2}    -  4\frac{C^{-1}\theta}{20}    \Big]   \\
 &\geq \frac{1}{4} ||v_{1} - v_{2}|| \Big(1 - \frac{|| v_{1} - v_{2}||}{2}\Big)    -  \frac{C^{-1}\theta}{20}       \\
 &\geq \frac{1}{4}\cdot C^{-1}\theta \cdot     \frac{1}{4} -  \frac{C^{-1}\theta}{20}       
 =\frac{C^{-1}\theta}{80}
\end{align*}

Here we have used the triangle inequality, that $||v_1 - v_2|| \leq \sqrt{2}<\frac{3}{2}  $, $||x_1-x_2|| \leq \tilde{r}_1$ and that $||v-\lambda u||$ is minimum for $\lambda = \frac{u\cdot v}{||u||^2}$. Taking $\tilde{\theta} := \frac{C^{-2}\theta}{80} $, it follows that the curves $\tilde{\gamma}_{1}$ and $\tilde{\gamma}_{2}$ are $\tilde{\theta}$-transversal in neighborhoods of radius $\tilde{r}$. 
\end{proof}

When $\gamma_{2}$ is not contained in $W^{cu}_{3}$, consider $W^{cu}_{2}$ that contains $\gamma_{2}$, the same $n$ as before $\theta=\tau^{n}\theta_{2}$ and $r= (\lambda_{c}^{-})^{n} r_{2}$. Take $\tilde{\epsilon}_{0}$ small such that it is possible to apply Claim \ref{Claim} for the stable holonomy between center-unstable manifolds with diameter smaller than $L$ whose $d^{ss}$-distance is smaller than $\tilde{\epsilon}_{0}$ (we take a local chart with product structure of $W^{cu}\times W^{ss}$ to say that the holonomy is $C^{1}$ close to the vertical projection). Taking $\epsilon_{0}<\tilde{\epsilon}_{0}$ and applying Claim 5.1 we obtain $\theta(\epsilon)$ and $r(\epsilon)$ such that the projections into $W^{cu}_{3}$ are $\theta(\epsilon)$-transversal in neighborhoods of radius $r(\epsilon)$.

\end{proof}

Now we are able to prove the robustness of the transversality condition
 under the condition of regularity of the stable foliation, which implies that the stable subbundle $E^{ss}_x(f)$ converges to $E^{ss}_x(f_0)$ for every $x\in U$ when $f$ tends to $f_0$.

\begin{prop}
The Transversality Condition (H1) is an open property under Condition (H3).
\end{prop}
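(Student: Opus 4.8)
The plan is to show that the hypothesis of Proposition 5.3 — transversality of stable projections of pairs of unstable curves whose $d^{ss}$-distance lies in the fixed fundamental domain $J=[a,Ia]$, with uniform constants $\theta_2$ and $r_2$ — is a property that persists under $C^2$-perturbations of $f_0$ satisfying (H3), and then invoke Proposition 5.3 to conclude that (H1) holds for all nearby $f$. Thus the real content is to verify this ``bounded-domain'' version of transversality is robust, after which the iteration argument of Proposition 5.3 does the rest automatically.

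First I would fix, for $f_0$, the data $a$, $\epsilon_1$, $\theta_2$, $L$, $r_2$ coming from (H1) restricted to the fundamental domain $J$; since (H1) is assumed for $f_0$ these exist. The key point is that everything involved varies continuously with $f$: by (H3) the stable foliation $\Fcal^{ss}_f$ (hence the stable holonomies $\pi^{ss}$ appearing in the definition) converges to $\Fcal^{ss}_{f_0}$ in the $C^1$-topology on a fixed neighbourhood of the attractor; by robust dynamical coherence and the usual graph-transform/Hadamard–Perron arguments the center-unstable manifolds $W^{cu}_f$ and the unstable foliation $\Fcal^{uu}_f$ vary continuously in the $C^1$-topology; and $Df$ restricted to each invariant subbundle varies continuously, so the contraction/expansion rates $\lambda^\pm_c(f),\lambda^-_{uu}(f)$ move continuously and the neutrality inequality (H2) is itself open. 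In particular an unstable curve $\gamma$ for $f$ of length $\le L$ lies $C^1$-close to an unstable curve for $f_0$ of comparable length, and the stable-distance function $d^{ss}_f(\gamma_1,\gamma_2)$ depends continuously on $f$ and on the curves.

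Next I would use Claim 5.1 (the $C^1$-perturbation lemma for $\theta$-transversality) as the technical engine. Given $f$ close to $f_0$, a pair of unstable curves $\gamma_1,\gamma_2$ for $f$ with $d^{ss}_f(\gamma_i,W^{cu}_3)\le\epsilon_1$ and $d^{ss}_f(\gamma_1,\gamma_2)\in J$, I compare with the corresponding configuration for $f_0$: the stable holonomy $\pi^{ss}_f$ into $W^{cu}_{3,f}$ differs from $\pi^{ss}_{f_0}$ (read in suitable product charts) by a $C^1$-small diffeomorphism, since $\Fcal^{ss}_f$ is $C^1$-close to $\Fcal^{ss}_{f_0}$ and the $C^1$-charts putting the center-unstable lamination into the horizontal (Proposition 2.4) can be chosen uniformly. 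The projected curves $\pi^{ss}_{f_0}(\gamma_1^{0}),\pi^{ss}_{f_0}(\gamma_2^{0})$ of the nearby $f_0$-configuration are $\theta_2$-transversal in neighbourhoods of radius $r_2$ by hypothesis; applying Claim 5.1 to the $C^1$-small change of chart and to the $C^1$-small change of holonomy yields that $\pi^{ss}_f(\gamma_1),\pi^{ss}_f(\gamma_2)$ are $\theta_2'$-transversal in neighbourhoods of radius $r_2'$, with $\theta_2',r_2'>0$ depending only on $\theta_2,r_2$ and the (small) $C^1$-size of the perturbation, not on the particular curves. Shrinking the neighbourhood $\mathcal{U}$ of $f_0$ we keep $\theta_2',r_2'$ bounded below by fixed positive constants; we also shrink $\mathcal{U}$ so that (H2) and robust dynamical coherence hold on $\mathcal{U}$, and so that the internal-radius constant $R_0$, the chart bounds $K_1$, and the fundamental-domain constant $I$ can be taken uniform. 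Then every $f\in\mathcal{U}$ satisfies the hypotheses of Proposition 5.3 with the uniform data $a,\epsilon_1,\theta_2',L,r_2'$, and Proposition 5.3 gives Condition (H1) for $f$, with constants $\epsilon_0(f),\theta(\cdot),r(\cdot)$ that can moreover be taken uniform on $\mathcal{U}$.

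The main obstacle is making the comparison between the $f$-configuration and an $f_0$-configuration rigorous and \emph{uniform}: one must produce, for each pair of $f$-unstable curves, a genuinely $C^1$-nearby pair of $f_0$-unstable curves lying in the right position (same approximate base points, $d^{ss}_{f_0}$-distance still inside or near $J$), and one must control the stable holonomies $\pi^{ss}_f$ and $\pi^{ss}_{f_0}$ in a \emph{common} family of $C^1$-charts whose derivatives are bounded independently of $f\in\mathcal{U}$ — this is where (H3) is essential, since without $C^1$-closeness of $\Fcal^{ss}$ the perturbed holonomy need not be $C^1$-close to the unperturbed one and Claim 5.1 would not apply. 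Once these uniformities are in hand the argument is a routine assembly of continuity statements plus two invocations of Claim 5.1 (one for the change of chart, one for the change of holonomy between center-unstable manifolds) and a final appeal to Proposition 5.3.
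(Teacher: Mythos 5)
Your reduction is the same as the paper's: both proofs begin by invoking Proposition 5.3 to replace (H1) by its ``bounded'' version on a fundamental domain $J=[a,Ia]$ of stable distances, and both then argue that this bounded version is robust using (H3) together with $C^1$-continuity of the unstable data. Where you diverge is in the mechanism for that robustness step. The paper does it with a cone argument: since $x\mapsto E^{uu}_x$ is continuous, one fixes a finite family of thin unstable cones and, by compactness, a single constant $\alpha>0$ bounding from below the angle between stable projections of any two such cones at stable distance in $J$; because tangency of a curve to a thin cone is manifestly $C^1$-open in the curve, and because (H3) keeps the projections $\pi^{ss}_f$ $C^1$-close to $\pi^{ss}_{f_0}$, the same cones and the same $\alpha$ work for every $f$ in a $C^1$-neighborhood, with no need to match an $f$-configuration to an $f_0$-configuration. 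You instead propose to approximate each pair of $f$-unstable curves by a genuinely nearby pair of $f_0$-unstable curves and to transfer transversality via two applications of Claim 5.1 (change of chart and change of holonomy). That route can be completed, but the step you yourself flag as the main obstacle is exactly the nontrivial one and is the step the paper's cone trick is designed to avoid: you must produce, for each $x\in\Lambda_f$ (which lies only near, not on, $\Lambda_{f_0}$), an $f_0$-unstable curve $C^1$-close to the given $f$-curve — i.e.\ joint continuity of the strong unstable lamination in $(f,x)$ — and you must arrange the matched pair to have $d^{ss}_{f_0}$-distance in a slightly enlarged fundamental domain (with the hypothesis of Proposition 5.3 extended to that enlargement by compactness, since $I=I(f)$ and $d^{ss}$ also move with $f$); similarly, the $f_0$-target surface should simply be taken to be the $f_0$-center-unstable manifold containing one of the matched curves, since continuity in $f$ of the center-unstable lamination is not among the hypotheses. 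So: same skeleton, different engine — the paper's cone/compactness argument is shorter and needs only continuity of $E^{uu}$ and of $\pi^{ss}$, while your configuration-matching argument buys a more explicit quantitative transfer (uniform $\theta_2',r_2'$ via Claim 5.1) at the cost of the lamination-continuity and fundamental-domain bookkeeping just described, which your write-up asserts but does not carry out.
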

\begin{proof}
As seen in Proposition 5.3, it is enough to check the transversality for projection of curves when $d^{ss}\in[a,Ia]=J$, where $J$ is a fundamental domain for the size of iterates of stable segments.

Since $x\rightarrow E^{uu}_{x}$ is continuous, we can consider a family of unstable cones with small width and fix a constant $\alpha>0$ that bounds from below the angle of each pair of  stable projections of unstable cones of this family when $d^{ss}\in J$. If a curve at $x$ is contained in a cone $C$, then every curve at $x$ that is $C^{1}$ close is also contained in the cone $C$. Thus the family of cones and the limitation $\alpha$ of the projections can be taken constant for every projection of $f$ in a neighborhood of $f_0$ in the $C^{1}$ topology. \end{proof}

\begin{proof}[Proof of Corollary B]
Given $f_{0}$, there exists a neighborhood $\mathcal{U}$ of $f_{0}$ such that every $f\in\mathcal{U}$ satisfies the hypotheses of Theorem A. Actually, Conditions (H2), (H3) and robust dynamical coherence are open in $f$, and it follows from Proposition 5.4 that the Transversality Condition is open under Condition (H3).
\end{proof}

\section{Attractors with Transversality}\label{s.attractors}

We will describe how to construct a family of nonhyperbolic attractors with central direction neutral and transversality between unstable leaves via the  stable projection.

\subsection{The Attractor $F_{0}$}

Consider $M= S^{1}\times [-1,1]\times [-1,1]$ and $F_{0}: M \rightarrow M$ of the type 
$$F_{0}(x,y,z) = \left(l x, \lambda_{c} y + g(x) , \lambda_{ss} z + h(x)\right)$$
where $\lambda_{ss}<\lambda_{c}<1$, $\lambda_{c}>\frac{1}{l} $ and $\lambda_{uu}=l\in\N$.


In what follows, we will describe two examples of such attractors $F_0$ that satisfies the transversality condition.

\subsubsection{Example 1:}
Let $l=\lambda_{uu}=3$ and $i\in\{1,2,3\}$. We consider the rectangles $R_{1}=[0,\frac{1}{3})\times [-1,1]^{2}$, $R_{2}=[\frac{1}{3},\frac{2}{3})\times [-1,1]^{2}$ and $R_{3}=[\frac{2}{3},1)\times [-1,1]^{2}$ and the functions $g$ and $h$ such that $g(x)=\alpha_{i} x + c_i$ and $h(x)=d_i$ if $x\in R_{i}$, where the constants $c_{i}$'s and $d_{i}$'s are parameters of translation guaranteeing that $F_{0}(M)\subset M$ and that $F_0$ is invertible.

The dynamics $F_0$ is a linear model of a hyperbolic attractor, restricted to each rectangle $R_{i}$ it is a hyperbolic affine transformation inserting $R_{i}$ into $M$ with slope $\alpha_{i}$ in the $y$-direction into the $xy$-plane. For simplicity we will consider a constant $\alpha\in(0,1-\lambda_{c})$ and choose the parameters $\alpha_{1}=\alpha$, $\alpha_{2}=0$, $\alpha_{3}=-\alpha$ and $c_{i}=-\alpha_{i}$, $i=1,2,3$, then the rectangles are inserted transversally with respect to the stable (vertical) foliation. This dynamics can be represented by the picture below.

\begin{figure}[h!]
  \centering
  \includegraphics[width=0.8\textwidth]{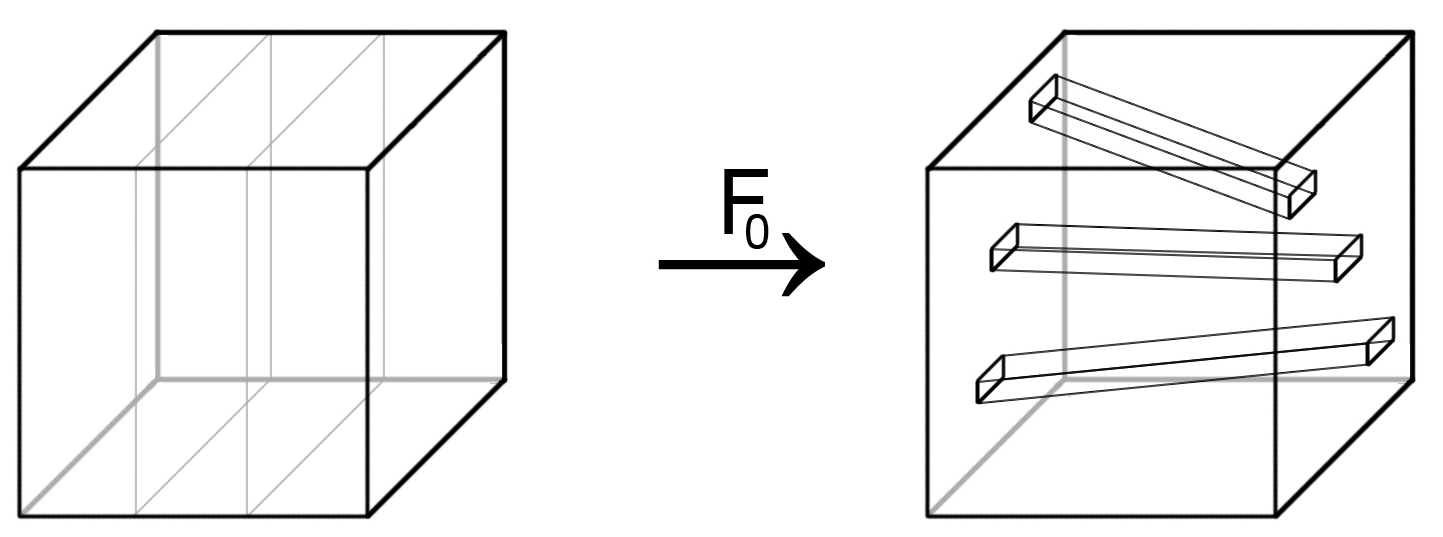}\\
\end{figure}

The sets $D_i= \{\frac{i}{3}\} \times [-1,1]^2$ are sent by $F_i$ into $\{0\}\times [-1,1]^2$. $F_0$ is uniformly hyperbolic when restricted to the interior of the $R_i$'s, so the unstable manifolds are well defined for every point in the attractor whose backward orbit never intersects any $D_i$, which corresponds to almost every point.

\begin{prop}
The attractor $\Lambda_{0}$ for $F_{0}$ satisfy the Transversality Condition (H1).
\end{prop}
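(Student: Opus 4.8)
The plan is to verify Condition (H1) by exhibiting explicit transversality constants for the linear model $F_0$ of Example 1, exploiting the fact that the stable foliation is the vertical $z$-foliation and the center-unstable manifolds are graphs over the $xy$-plane with controlled slopes. The key observation is that $F_0$ restricted to each rectangle $R_i$ is affine, so unstable curves through points of $\Lambda_0$ are, after finitely many backward iterates, pieces of affine lines in the $xy$-plane with slope in a fixed finite set $\{\alpha_1,\alpha_2,\alpha_3\}=\{\alpha,0,-\alpha\}$; and projecting through the stable (vertical) foliation preserves these slopes exactly, since the stable projection $\pi^{ss}$ onto a center-unstable manifold $W^{cu}_3$ is, in the $(x,y)$-coordinates, just the identity map on the first two coordinates. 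Hence the projected curves $\pi^{ss}(\gamma_1)$ and $\pi^{ss}(\gamma_2)$ are (locally) affine segments with slopes among $\{\alpha,0,-\alpha\}$.

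First I would set up coordinates: identify a neighborhood of $\Lambda_0$ with a product $S^1\times[-1,1]\times[-1,1]$, so that $W^{cu}$ leaves are level sets $\{z=\text{const}\}$ foliated further, and $W^{ss}$ leaves are the vertical segments $\{(x,y)\}\times[-1,1]$; then $\pi^{ss}$ onto a fixed $W^{cu}_3$ acts as $(x,y,z)\mapsto(x,y,z_3(x,y))$, hence forgets the $z$-coordinate. Next I would invoke Proposition 5.3: it suffices to check transversality for pairs of unstable curves $\gamma_1,\gamma_2$ whose stable distance $d^{ss}(\gamma_1,\gamma_2)$ lies in a fixed fundamental domain $J=[a,Ia]$ (here $I=\lambda_{ss}^{-1}$). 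For such a pair, pull back by $F_0^{-n}$ for the appropriate bounded $n$ so that the curves land in distinct rectangles $R_i$, $R_j$ (this is where $d^{ss}$ being in the fundamental domain forces the two curves to have recently separated, hence to have passed through different branches of $F_0^{-1}$, i.e. to lie over different $R_i$'s after a controlled number of backward steps). There the pulled-back curves are affine with slopes $\alpha_i\neq\alpha_j$, so they meet at angle $\geq\angle(\alpha_i,\alpha_j)\geq\theta_0$ for a uniform $\theta_0>0$; pushing forward $n$ times and applying Claim 5.1 (or a direct estimate, since $F_0$ is linear on each piece and the distortion of angles under $DF_0$ is uniformly bounded) yields $\theta(\epsilon)$-transversality in neighborhoods of radius $r(\epsilon)$ for suitable functions $\theta,r$ of $\epsilon=d^{ss}(\gamma_1,\gamma_2)$. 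I would then record the explicit constants $\epsilon_0$, $L$, $\theta(\cdot)$, $r(\cdot)$ so produced.

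The main obstacle I expect is bookkeeping around the case in which an unstable curve straddles one of the discontinuity sets $D_i=\{\tfrac{i}{3}\}\times[-1,1]^2$: there the ``slope'' of the curve is not a single value but can jump between $\alpha$, $0$, $-\alpha$ along the curve, so the cones in the definition of $\theta$-transversality must be taken wide enough to contain all three directions, and one must check that the worst-case angle between such a fat cone for $\gamma_1$ and one for $\gamma_2$ is still bounded below — which it is, since the three slopes are symmetric about $0$ and the relevant pairwise angles never degenerate as long as not both cones are the degenerate ``all three directions'' cone simultaneously at nearby points; controlling that simultaneity uses that the discontinuity sets are codimension one and the backward-iteration argument places $\gamma_1,\gamma_2$ over genuinely different rectangles. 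A secondary technical point is ensuring the set $C$ with product structure containing $\gamma_1,\gamma_2,W^{cu}_3$ in the statement of (H1) can be chosen with $d^{ss}$-diameter below $\epsilon_0$; this follows from the uniform transversality of the inserted rectangles to the vertical foliation and compactness, so I would dispatch it quickly after the angle estimate.
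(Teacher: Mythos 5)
There is a genuine gap, and it sits at the very center of your argument. Your structural claim that unstable curves, after finitely many backward iterates, become affine segments with slope in the finite set $\{\alpha_1,\alpha_2,\alpha_3\}=\{\alpha,0,-\alpha\}$ is false for this model. The unstable direction at $p\in\Lambda_0$ is $(1,\alpha^{uu}(p),0)$ where $\alpha^{uu}(p)=\sum_{j\geq 0}\lambda_{uu}^{-1}\rho^{j}\,\alpha\big(F_0^{-(j+1)}(p)\big)$ with $\rho=\lambda_c/\lambda_{uu}$; it is a weighted sum over the \emph{entire} backward itinerary, takes values in a Cantor set inside $\big[-\tfrac{\alpha}{\lambda_{uu}(1-\rho)},\tfrac{\alpha}{\lambda_{uu}(1-\rho)}\big]$, and is essentially never equal to any $\alpha_i$ (indeed $|\alpha^{uu}|<\alpha/2$ here). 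Backward iteration only shifts the itinerary; it never terminates in one of the three slopes. Consequently the inference ``the pulled-back curves lie over distinct rectangles $R_i\neq R_j$, hence their slopes are $\alpha_i\neq\alpha_j$ and they meet at a uniform angle $\theta_0$'' is not valid as stated: two curves whose first backward symbols differ could a priori still have slopes much closer than $|\alpha_i-\alpha_j|$, because their deeper itineraries also contribute. Whether they do is exactly what must be estimated, namely that the first differing term $\rho^{j_0}\alpha/\lambda_{uu}$ dominates the tail $2\alpha\rho^{j_0+1}/\big(\lambda_{uu}(1-\rho)\big)$, yielding
\begin{align*}
|\alpha^{uu}(p)-\alpha^{uu}(p')|\;\geq\;\frac{\alpha}{\lambda_{uu}}\,\rho^{j_0}\,\frac{1-3\rho}{1-\rho}\;>\;0 ,
\end{align*}
which uses $\rho<\tfrac13$. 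This series-domination estimate is the heart of the matter (it is precisely the computation in the paper, which takes $j_0\leq t_0\sim\log\epsilon/\log\lambda_{ss}$ and obtains an explicit $C(\epsilon)$), and it is absent from your plan; without it the uniform angle you invoke has no justification.

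The rest of your scaffolding is sound and close to what is actually needed: the stable projection is indeed the vertical projection and preserves the $(x,y)$-data; relating $d^{ss}(\gamma_1,\gamma_2)>\epsilon$ to the curves lying in distinct itinerary cylinders of bounded depth is correct (the $d^{ss}$-diameter of a depth-$t$ cylinder is $\lambda_{ss}^{t}$), and reducing to a fundamental domain $J=[a,\lambda_{ss}^{-1}a]$ via Proposition 5.3 is a legitimate variant (the paper does this for its second example, while for Example 1 it estimates the angle directly as a function of $\epsilon$). Note also that once the correct description of unstable slopes is used, your ``straddling the discontinuity sets'' worry changes nature: a long unstable curve is piecewise affine with slopes in the Cantor set of admissible values, and the jump at a crossing corresponding to depth $j$ is only $O(\rho^{j})$, so no ``fat cone containing all three directions'' ever arises; the cones in the definition of $\theta$-transversality absorb these small oscillations. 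To repair the proof, replace the finite-slope claim by the formula for $\alpha^{uu}$ and carry out the geometric-series lower bound above; the remainder of your outline then goes through.
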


\begin{proof}
The calculation is done noticing that the stable projection $\pi^{ss}$ of $F_{0}$ coincides with the vertical projection, the center-unstable direction at every point corresponds to the $xy$-plane and $E^{uu}$ is given for every $p\in\Lambda_0$ by $E^{uu}(p) = \big(1, \sum_{j}\lambda_{uu}^{-1} {\alpha_{j}(p)} \big(\frac{\lambda_{c}}{\lambda_{uu}}\big)^{j}, 0 \big)$, where $\alpha_{j}(p) = \alpha(F_0^{-j-1}(p) ) \in \{ \alpha_{1}, \alpha_{2}, \alpha_{3} \}$ depends on the itinerary of the $x$-coordinate of $p$ by the expansion in $S^{1}$ by the factor $\lambda_{uu}=3$. We obtain the expression for $E^{uu}_{p} = (1,\alpha^{uu}(p),0)$ rewriting  the condition $DF_{0}(p)\cdot E^{uu}_{p}=E^{uu}_{F_{0}(p)}$ as $(\lambda_{uu},\alpha_{i}(p)+\alpha^{uu}(p)\lambda_{c},0) = \lambda_{uu}(1, \alpha^{uu}(F_{0}(p)),0)$, so $\alpha^{uu}$ is a function that satisfies $\alpha^{uu}(F_{0}(p))=(\lambda_{uu})^{-1}\alpha(p) + \frac{\lambda_{c}}{\lambda_{uu}} \alpha^{uu}(p)$, where $\alpha(x,y,z)=\alpha_i x$. Iterating the last equation $j$ times and considering $\rho:=\frac{\lambda_{c}}{\lambda_{uu}}<\frac{1}{3}$, we get:
\begin{align*}
\alpha^{uu}(F_{0}^{j}(p))=(\lambda_{uu})^{-1}\alpha(F_{0}^{j-1}(p)) + \rho(\lambda_{uu})^{-1}\alpha(F_{0}^{j-2}(p)) + \cdots + \rho^{j}\alpha^{uu}(p)
\end{align*}

So, for every $\tilde{p}\in\Lambda$, we have:
\begin{align*}
\alpha^{uu}(\tilde{p}) &= (\lambda_{uu})^{-1}\alpha(F_{0}^{-1}(\tilde{p})) + \rho(\lambda_{uu})^{-1}\alpha(F_{0}^{-2}(\tilde{p})) + \cdots + \rho^{j}\alpha^{uu}(F_{0}^{-j}(\tilde{p}))\\
&= \sum_{j=0}^{j_{0}}{(\lambda_{uu})^{-1}\rho^{j} \alpha(F_{0}^{-(j+1)}(\tilde{p}))} + \rho^{j_{0}}\alpha^{uu}(F_{0}^{-j_{0}}(\tilde{p})) \\
&\rightarrow \sum_{j\geq 0}{(\lambda_{uu})^{-1}\rho^{j}\alpha(F_{0}^{-(j+1)}(\tilde{p}))}
\end{align*}

Consider multi-indexes $[k]=(k_{0},\cdots,k_{t-1})$ of size $t$, where $k_{j}\in\{1,2,3\}$, define the rectangle $R_{[k]}$ as the set of points $q$ such that $F_0^{j}(q)\in R_{k_{j}}$, $0\leq j \leq t-1$ and $f_{[k]}$ as the restriction of $F_0^{t}$ to $R_{[k]}$. Since the $d^{ss}$-diameter of $R_{(k_{0},\cdots,k_{t-1})}$ is equals to $(\lambda_{ss})^{t}$, if $d^{ss}(\gamma_{1}^{uu},\gamma_{2}^{uu})>\epsilon$ then these curves are contained in distinct $f_{[k]}(M)$, $f_{[k']}(M)$ for multi-indexes $[k]=(k_{0},\cdots,k_{t_{0}-1})$ and $[k']=(k'_{0},\cdots,k'_{t_{0}-1})$ of size $t_{0}\geq \frac{\log\epsilon}{\log\lambda_{ss}}$. For $p\in f_{[k]}(M)$ and $p'\in f_{[k']}(M)$, taking $j_{0}=\inf\{j, k_{j}\neq k'_{j}\}\leq t_{0}$ we have:
\begin{align*}
\alpha^{uu}(p)-\alpha^{uu}(p') &= \sum_{j\geq j_{0}}{\frac{1}{\lambda_{uu}}\rho^{j} \left[ \alpha(F^{-(j+1)}(p)) - \alpha(F^{-(j+1)}(p'))\right]}\\
&= \frac{\rho^{j_{0}}}{\lambda_{uu}}\left[\alpha(F^{-(j_{0}+1)}(p)) - \alpha(F^{-(j+1)}(p'))\right] + \sum_{j>j_{0}}{\frac{\rho^{j}}{\lambda_{uu}} (\alpha - \alpha')}\\
&\geq \frac{1}{\lambda_{uu}}\left[\rho^{j_{0}}\alpha - 2\alpha\cdot \frac{\rho^{j_{0}+1}}{1-\rho}\right]\\
&\geq \frac{1}{\lambda_{uu}}\rho^{\frac{\log\epsilon}{\log\lambda_{ss}}}\alpha\left(\frac{1-3\rho}{1-\rho}\right) := C(\epsilon)
\end{align*}

Since $\pi^{ss}_{0}E^{uu}(p) = (1,\alpha^{uu}(p))$, it follows that $\angle( (1,\alpha^{uu}(p)),(1,\alpha^{uu}(p') )$ is bounded from below, because the function $\alpha^{uu}$ is uniformly bounded and the difference $|\alpha^{uu}(p)-\alpha^{uu}(p')|$ is bounded from below by $C(\epsilon)$. Then there exists a function $\theta:(0,1)\rightarrow\R^{+}$ such that the angle between $\pi^{ss}(\gamma_{1}^{uu})$ and $\pi^{ss}(\gamma_{2}^{uu})$ is greater than $\theta(\epsilon)$ whenever $d^{ss}(\gamma_{1}^{uu} , \gamma_{2}^{uu})>\epsilon$.
\end{proof}

\subsubsection{Example 2:}Consider
 $F_0:M\rightarrow M$ given by
$$F_{0}(x,y,z) = \left(2x, \lambda_{c} y + \sin(2\pi x), \lambda_{ss} z + h(x) \right)$$
with   $\lambda_{ss}<0,5 <\lambda_{c}< 0,51$, $l=\lambda_{uu}=2$, $h(x)$ equals to $\frac{1}{2}$ if $0\leq x\leq \frac{1}{2}$ and equals to  $-\frac{1}{2}$ if $\frac{1}{2}\leq x \leq 1$.
This dynamics is an invertible version of the example 1 in \cite{fat}, considering
$M$ the quotient of $[0,1]\times[-1,1]\times[-1,1]$ by the identification  $(1,y,z)\sim (0,y,1-z)$.

\begin{prop}
 The attractor $\Lambda_{0}$ for $F_{0}$ satisfy the Transversality Condition (H1).
\end{prop}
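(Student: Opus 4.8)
The plan is to follow the scheme of the proof of Proposition 6.1, first computing the invariant data of $F_0$ explicitly and then bounding, in terms of $\epsilon$, the angle between the stable projections of the unstable directions. Write $g(x)=\sin(2\pi x)$ and $\lambda_{uu}=l=2$. One checks that $E^{ss}=(0,0,1)$ and $E^{c}=(0,1,0)$, so $E^{cu}$ is the horizontal $xy$-plane at every point, the center-unstable leaves are the horizontal planes $\{z=\mathrm{const}\}$, $\pi^{ss}$ is the vertical projection, and $\pi^{ss}(\gamma_i)$ is a graph $y=\Phi_i(x)$ in $W^{cu}_3$. The unstable direction is $E^{uu}_p=(1,\alpha^{uu}(p),0)$, and $DF_0(p)E^{uu}_p=E^{uu}_{F_0(p)}$ gives, with $\rho:=\lambda_c/\lambda_{uu}=\lambda_c/2<\tfrac13$, the recursion $\alpha^{uu}(F_0p)=\lambda_{uu}^{-1}g'(x_p)+\rho\,\alpha^{uu}(p)$; iterating backwards exactly as in Proposition 6.1,
\[
\alpha^{uu}(p)=\sum_{j\ge 0}\lambda_{uu}^{-1}\rho^{\,j}\,g'\!\big(x_{F_0^{-(j+1)}(p)}\big),\qquad g'(x)=2\pi\cos(2\pi x).
\]
Since $\Phi_i'=\alpha^{uu}$ evaluated along $\gamma_i$, which depends only on $x$ and the backward itinerary $\omega^i$ of $\gamma_i$ under the doubling map, Condition (H1) reduces to: whenever $p\in\gamma_1$, $p'\in\gamma_2$ have $\pi^{ss}(p),\pi^{ss}(p')$ within $r(\epsilon)$, one has $|\alpha^{uu}(p)-\alpha^{uu}(p')|\ge C(\epsilon)>0$; up to an error of order $r(\epsilon)$ this is the same as comparing $\alpha^{uu}$ at a common $x$ for the two itineraries.

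The second step relates $d^{ss}$ to itineraries. As the center-unstable leaves are horizontal planes, $d^{ss}(\gamma_1,\gamma_2)$ is the vertical gap $|z_1-z_2|$ of the two planes, and $z_i$ is read off the itinerary $\omega^i$ through a $\lambda_{ss}$-contracting code; since $\lambda_{ss}<\tfrac12$ this code is injective on the attractor's $z$-section, so the itinerary is constant along each $\gamma_i$, and the depth-$t$ cylinder has $d^{ss}$-diameter $\asymp\lambda_{ss}^{\,t}$. Hence $d^{ss}(\gamma_1,\gamma_2)>\epsilon$ forces $\omega^1$ and $\omega^2$ to first disagree at some $j_0\le t_0(\epsilon)$ with $\lambda_{ss}^{\,t_0(\epsilon)}\asymp\epsilon$; invoking Proposition 5.3 one may even assume $d^{ss}\in[a,Ia]$, so that $j_0$ stays in a bounded window and all constants below are uniform. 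For a close pair the backward $x$-coordinates agree up to level $j_0-1$, with common value $u$, and split at level $j_0$ into $u/2$ and $(u+1)/2$. The terms of index $\le j_0-2$ in $\alpha^{uu}(p)-\alpha^{uu}(p')$ cancel, so the leading surviving term is $\lambda_{uu}^{-1}\rho^{\,j_0-1}\big(g'(u/2)-g'((u+1)/2)\big)=\lambda_{uu}^{-1}\rho^{\,j_0-1}\cdot 4\pi\cos(\pi u)$, while the remaining terms are dominated by $\lambda_{uu}^{-1}\rho^{\,j_0}\tfrac{4\pi}{1-\rho}$. Since $\rho<\tfrac13$ gives $\rho/(1-\rho)<\tfrac12$, as long as $|\cos(\pi u)|$ is bounded away from $0$ (say $|\cos(\pi u)|\ge\tfrac12$) the leading term dominates and $|\alpha^{uu}(p)-\alpha^{uu}(p')|\gtrsim\rho^{\,j_0-1}\ge\rho^{\,t_0(\epsilon)}=:C(\epsilon)>0$; choosing $r(\epsilon)$ small enough that the oscillation of $\alpha^{uu}$ over an $r(\epsilon)$-ball is $\ll C(\epsilon)$ then yields $\theta(\epsilon)$-transversality with $\theta(\epsilon)\asymp C(\epsilon)$.

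The main obstacle — the only place where one needs more than the one-line computation that sufficed for the piecewise-linear $g$ of Example 1 — is the case $u$ close to $\tfrac12$, where $g'(u/2)\approx g'((u+1)/2)\approx 0$: because $g'$ has the symmetry $g'(x)=g'(1-x)$, the deeper-level terms of $\alpha^{uu}(p)-\alpha^{uu}(p')$ can also nearly cancel, so the leading term no longer automatically wins. Here I would exploit the particular parameter window. On one side, for $\pi^{ss}(p)$ and $\pi^{ss}(p')$ to be $r(\epsilon)$-close the $y$-coordinates must nearly agree, and the leading part of $\Phi_1(x)-\Phi_2(x)$ is $2\lambda_c^{\,j_0-1}\sin(\pi u)$, which constrains $u$. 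On the other, one estimates $\Phi_1'-\Phi_2'$ directly near the symmetry locus: when $u$ is near $\tfrac12$ the backward $x$-coordinates of $p$ and $p'$ stay mirror-close — the distance from the level-$j$ coordinate of $p$ to the reflection through $x\mapsto 1-x$ of that of $p'$ is $\asymp|2u-1|\,2^{-(j-j_0)}$ — so that $g'(a_j)-g'(b_j)=g'(a_j)-g'(1-b_j)$ is $O\big(\sup|g''|\cdot|2u-1|\,2^{-(j-j_0)}\big)$; summing this geometric series (ratio $\rho/2<\tfrac16$) against the leading term $2\pi\rho^{\,j_0-1}|\cos(\pi u)|\asymp\rho^{\,j_0-1}|2u-1|$ shows the leading term still prevails, again because $\rho$ is small. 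Combining the two cases produces a uniform $C(\epsilon)>0$, hence functions $\theta$ and $r$ witnessing (H1). I expect this endgame near $u=\tfrac12$ to be the genuinely delicate computation; the rest is a routine transcription of Proposition 6.1, and the choice $\lambda_{ss}<\tfrac12<\lambda_c<0.51$ is precisely what makes the relevant geometric series converge in the favorable direction.
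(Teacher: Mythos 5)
Your overall scheme --- the series formula for $\alpha^{uu}$, the reduction via Proposition 5.3 to a fundamental domain of stable distances (so that in effect $j_0=1$ and the two curves lie in different sheets $F_0(R_1)$, $F_0(R_2)$), and the leading-term-versus-tail estimate using the smallness of $\rho=\lambda_c/2$ --- is the same as the paper's, and your case $|\cos(\pi u)|\ge\tfrac12$ is essentially the paper's first case. The gap is exactly where you locate the difficulty, near the symmetry locus $u\approx\tfrac12$, and your proposed fix (b) does not work. The ``mirror-closeness'' claim is false: beyond the first level the backward itineraries of the two curves are arbitrary and independent of each other, so at level $2$ one has $x_2\in\{u/4,\,u/4+\tfrac12\}$ while $1-x_2'\in\{\tfrac14-\tfrac u4,\,\tfrac34-\tfrac u4\}$, and for mismatched branch choices the distance from $x_2$ to $1-x_2'$ is about $\tfrac12$, not $O(|2u-1|)$, so $g'(x_2)-g'(x_2')$ is of order one. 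Worse, no angle bound of the form $|\alpha^{uu}(p)-\alpha^{uu}(p')|\gtrsim|2u-1|$ (indeed no positive lower bound at all) can hold in this region: taking the mirror-conjugate backward orbit $x_j'=1-x_j$ for all $j$ (a legitimate backward orbit, with $x_1=\tfrac14$, $x_1'=\tfrac34$, over the common base point $x=\tfrac12$), the evenness $\cos\bigl(2\pi(1-x)\bigr)=\cos(2\pi x)$ gives $\alpha^{uu}(p)=\alpha^{uu}(p')$ exactly, so the projected tangent directions coincide. Hence the symmetry-locus case cannot be settled by making the leading term of $\Phi_1'-\Phi_2'$ prevail.

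What saves the statement --- and what the paper does --- is that transversality only has to be verified at pairs of projected points within distance $r$ of one another, and near the symmetry locus such pairs do not exist: using $y=\sum_{i\ge1}\lambda_c^{\,i-1}\sin(2\pi x_i)$, the first terms differ by $2|\sin(\pi u)|\approx 2$, the second terms are estimated separately (for $u\in[\tfrac25,\tfrac35]$ the coordinates $x_2,x_2'$ are pinned near $\tfrac14,\tfrac34$ up to $\pm\tfrac14$ shifts), and only the remaining tail is bounded crudely by $2\lambda_c^{2}/(1-\lambda_c)$; the window $\lambda_c<0.51$ is precisely what makes the resulting separation constant $K_2$ positive, and one then takes $r_2<K_2/2$ so that the bad case is vacuous. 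Your ingredient (a) gestures at this (``the $y$-coordinates must nearly agree \dots which constrains $u$''), but you never draw the conclusion that the case is vacuous, subordinating it instead to the flawed estimate (b); moreover your quantitative choices would not close the argument even along these lines, since the crude tail bound $2\lambda_c/(1-\lambda_c)>2$ kills the $y$-separation, and your larger bad region $|\cos(\pi u)|<\tfrac12$, i.e.\ $u\in(\tfrac13,\tfrac23)$, makes the separation fail at its edges even with the finer second-term estimate. The endgame must therefore be reorganized as in the paper: a small bad region such as $u\in[\tfrac25,\tfrac35]$, a uniform distance separation of the projected points there, and the angle bound only outside it.
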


\begin{proof}
Once again the stable projection $\pi^{ss}$ is the projection into the $z$-coordinate, the center-unstable direction is tangent to  the $xy$-plane and $E^{uu}$ is given for every $p\in\Lambda_0$ by $E^{uu}(p) = \big(1, \sum_{j}{ \lambda_{uu}^{-1} \alpha(F_0^{-j-1}p)} \big(\frac{\lambda_{c}}{\lambda_{uu}}\big)^{j}, 0 \big)$, where $\alpha(x,y,z)=\frac{\partial
}{\partial x}\sin(2\pi x)=2\pi \cos(2\pi x)$.

From Proposition 5.3, it is sufficient to check the transversality for every pair of unstable curves whose stable distance is in the interval $J=[\frac{1}{4},\frac{1}{4 \lambda_{ss} }]$. This is obtained considering $a=\frac{1}{4}$, $I=\frac{1}{\lambda_{ss}}>2$ and $\epsilon_1=\frac{1}{4 \lambda_{ss}}$ in the statement of Proposition 5.3.

 Given two unstable curves $\gamma_1^{uu}$ and $\gamma_2^{uu}$ with $d^{ss}(\gamma_1^{uu} ,\gamma_2^{uu} )\in J$, $p=(x,y,z) \in \gamma_1^{uu}$, $p'=(x',y',z')\in  \gamma_2^{uu}$, the condition $d^{ss}(\gamma_{1}^{uu},\gamma_{2}^{uu}) \in J$ implies that $\gamma_{1}^{uu}$ and $\gamma_{2}^{uu}$ are not contained in the same set $F_0(R_1)$ or $F_0(R_2)$.

 Denoting $F_0^{-j}(p)=(x_j,y_j,z_j)$ and $F_0^{-j}(p')=(x'_j,y'_j,z'_j)$. When $x_1,x'_1 \in [0,\frac{2}{5}]\cup [\frac{3}{5},1]$ one can check that   $\Big| \cos(2\pi x_1) - \cos(2\pi x'_1) \Big| \geq \cos(\frac{2\pi}{5})$ and $\Big|  \cos(2\pi x_2) - \cos(2\pi x'_2)) \Big| \leq \cos(\frac{\pi}{5})+\cos(\frac{\pi}{2}-\frac{\pi}{5}) $. This implies that

\begin{align*}
\Big|\alpha^{uu}(p)-\alpha^{uu}(p') \Big| &= \Big|\sum_{j\geq 1}{\frac{1}{\lambda_{uu}}\rho^{j} (\alpha(F^{-(j+1)}(p))- \alpha(F^{-(j+1)}(p'))}) \Big|  \\
&\geq \pi \Big| \cos(2\pi x_1) - \cos(2\pi x'_1) \Big|
 - \frac{\pi\lambda_c}{2} \Big|  \cos(2\pi x_2) - \cos(2\pi x'_2)) \Big|\\
 &\quad\quad\quad -  \Big| \sum_{j>2}{\frac{2\pi \rho^{j}}{\lambda_{uu}} ( \alpha(x_j) - \alpha(x'_j)}   \Big|   
\end{align*}
which is bounded from below by $K=2\pi\Big[\cos(\frac{2\pi}{5}) - \frac{\lambda}{4}\Big(\cos(\frac{\pi}{5})+\cos(\frac{\pi}{2}-\frac{\pi}{5})  \Big) - \frac{\lambda^2}{4}\frac{2}{2-\lambda}\Big] > 0$.

  When $x_1,x'_1 \in [\frac{2}{5}, \frac{3}{5}]$, a lower bound for the distance of $\pi^{ss}(x_1)$ and $\pi^{ss}(x'_1)$ is obtained similarly using the relation $y= \sum_{i=1}^{\infty} \lambda_c^{i-1}g(F_0^{-i}(p) )$, which is verified using that
$F_0^n(p)=\big(\lambda_{uu}^n x,\lambda_c^n y + \sum_{j=1}^n \lambda_c^{n-j} g(F_0^{j-1}(p) ), \lambda_{ss}^n z + \sum_{j=1}^n \lambda_{ss}^{n-j} g(F_0^{j-1}(p)) \big)$,
denoting $g(x,y,z)=g(x)$ and $h(x,y,z)=h(x)$
,
taking $p=F_0^n(F_0^{-n}(p))$
and sending $n\rightarrow\infty$.
 The condition $x_1,x'_1 \in [\frac{2}{5}, \frac{3}{5}]$ implies that the distance of the points $x_2$, $x_2'$ to the set $\{\frac{1}{8} , \frac{3}{8} , \frac{5}{8} , \frac{7}{8}\}$ is smaller than $\frac{1}{40}$, implying 
$$|\pi^{ss}(p) - \pi^{ss}(p') | \geq | y_1 - y_1' | \geq  2 \Big[ \sin\Big(\frac{2\pi}{5}\Big) - \lambda_c \sin\Big(\frac{3\pi}{10}\Big) - \frac{\lambda_c^2}{1-\lambda_c} \Big] =: K_2 > 0$$
   By continuity, we have that $|\pi^{ss}(p) - \pi^{ss}(p') | > \frac{K_2}{2}$ whenever  $x_1,x'_1 \notin [\frac{2}{5}-\epsilon_1 , \frac{3}{5} +\epsilon_1]$. So, for $r_2=\min\{\frac{\epsilon_1}{2},\frac{K_2}{2}\}$, we get  $\theta_2$-transversality in neighborhoods of radius $r_2$.
\end{proof}

\subsection{The Family of Attractors $F_{\mu,n}$} Let us  give an example of nonhyperbolic attractors satisfying the transversality condition.

Let $F_{0}$ be as in any of the two examples before, we have 
$F_{0}^{n}(x,y,z)=(\tau^{n}(x), \lambda_{c}^{n}y + g_{n}(x), \lambda_{ss}^{n}z + h_n(x) ) $, where $\tau:S^{1}\rightarrow S^{1}$ is the expansion in the circle by the factor $\lambda_{uu}$,  $g_{n}(x)= \sum_{j=0}^{n-1}{\lambda_{c}^{n-j-1} g(\tau^{j}(x))}$  and $h_{n}(x)= \sum_{j=0}^{n-1}{\lambda_{c}^{n-j-1}h(\tau^{j}(x))}$.

Let $q$ be a periodic of $F_{0}$, $\delta<\frac{1}{10\lambda_{uu}}$, an affine function $\psi_{0}:B^{cu}(p,\delta)\subset M \rightarrow B(0,\delta)\subset \R^{2}$ that sends $q\rightarrow 0$ and $\{E^{uu}_{0},E^{c}_{0}\}\rightarrow \{e_{1},e_{2}\}$, take a bump function $\psi_{1}: B(0,\delta)\subset \R^{2} \rightarrow [0,1]$ of class $C^{\infty}$, with
$\psi_{1}(x)=1$ if $||x||\leq\frac{\delta}{3}$, $\psi_{1}(x)=\in (0,1)$ if $\frac{1}{3}\leq||x||\leq\frac{2}{3}$ and $\psi_{1}(x)=0$ if $||x||\geq\frac{2\delta}{3}$, this is done with $||\psi_{1}||_{C^{1}} \leq 2\frac{\delta^{-1}}{3}$, $||\psi_{1}||_{C^{0}}\leq 1$ and $||\psi_{0}||_{C^{0}}\leq \delta$. Fixing some $\lambda_{c}^{+}>1$,  define $\Phi_{\mu,n}:S^{1}\times[-1,1]^{2}\rightarrow \R$ by $
\Phi_{\mu,n}(x,y,z)= \mu \psi_{1}\left(\psi_{0}(x,y)\right) \Big[ (\lambda_{c}^{+} - \lambda_{c}^{n})y \Big]$ and define the family $F_{\mu,n}: M \rightarrow M $ by
$$F_{\mu,n}(x,y,z) = \Big( \lambda_{uu}^{n}, \lambda_{c}^{n} y + g_{n}(x)  + \Phi_{\mu,n}(x,y) , \lambda_{ss}^{n} z + h_n(x) \Big)$$

This family $F_{\mu,n}$ corresponds to a deformation of $F_{0}^{n}$ changing the index of the  periodic point $p$ when passing through a pitchfork bifurcation. The deformation is done along the central direction, keeping the same central direction for every parameter $\mu$. The attractor is the set $\Lambda_{\mu,n}=\cap_{j\geq 0} F_{\mu,n}^{j}(M)$. We will see that for an appropriate choice of $n$, it is possible to keep close the unstable direction in order that the transversality condition still holds for every parameter $\mu\in[0,1]$.


\begin{prop}
For every $\epsilon_{1}>0$, there exists an integer $n_{0}\in\N$ such that  $d(E^{uu}_{F_{\mu,n}}(x) , E^{uu}_{F_{0}^{n}}(x))<\epsilon_{1}$ for every $x\in\Lambda_{\mu,n}$, every $\mu\in [0,1]$ and every $n\geq n_{0}$.
\end{prop}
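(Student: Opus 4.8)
The plan is to pin down the unstable direction of both $F_0^n$ and $F_{\mu,n}$ through the standard invariant‑cone/graph‑transform characterization and then compare the two fixed points, exploiting that the perturbation $\Phi_{\mu,n}$ acts only on the $y$‑coordinate and enters the unstable‑slope recursion \emph{only through its derivative}, which is uniformly bounded, while the unstable expansion $\lambda_{uu}^n=l^n$ tends to infinity.

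First I would fix coordinates exactly as in the proofs of Propositions 6.1 and 6.3. Since $h$ (hence $h_n$) is piecewise constant, $Dh_n$ vanishes off a finite set, so the unstable direction of both maps lies in the $xy$‑plane; write $E^{uu}_{F_0^n}(p)=(1,\alpha_0(p),0)$ and $E^{uu}_{F_{\mu,n}}(p)=(1,\alpha_{\mu,n}(p),0)$, so that $d(E^{uu}_{F_{\mu,n}}(x),E^{uu}_{F_0^n}(x))$ is comparable to $|\alpha_{\mu,n}(x)-\alpha_0(x)|$ and it suffices to make the latter small. The decisive structural remark is that $F_{\mu,n}$ and $F_0^n$ induce the \emph{same} map $\tau^n$ on the $x$‑coordinate (the term $\Phi_{\mu,n}$ affects only the $y$‑component), so a point $p\in\Lambda_{\mu,n}$ together with its whole backward $F_{\mu,n}$‑orbit is coded by the backward $\tau$‑itinerary $\xi(p)$ of its $x$‑coordinate, and $\alpha_0(x)$ is read off along this very same itinerary. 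Thus $\alpha_{\mu,n}$ and $\alpha_0$ are functions of $\xi$ on one common symbolic space, obeying
\begin{align*}
\alpha_0(F_0^n p)&=\lambda_{uu}^{-n}\big(g_n'(x(p))+\lambda_c^n\,\alpha_0(p)\big),\\
\alpha_{\mu,n}(F_{\mu,n}p)&=\lambda_{uu}^{-n}\big(g_n'(x(p))+\partial_x\Phi_{\mu,n}(p)+(\lambda_c^n+\partial_y\Phi_{\mu,n}(p))\,\alpha_{\mu,n}(p)\big).
\end{align*}

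Next I would record uniform estimates and carry out the comparison. From $\Phi_{\mu,n}(x,y,z)=\mu\,\psi_1(\psi_0(x,y))(\lambda_c^+-\lambda_c^n)y$ together with $\|\psi_1\|_{C^1}\le\tfrac23\delta^{-1}$, $\|\psi_1\|_{C^0}\le1$, $|y|\le1$, $\mu\le1$ and $0\le\lambda_c^+-\lambda_c^n\le\lambda_c^+$, one gets a constant $K_0=K_0(\delta,\lambda_c^+)$, independent of $n$ and $\mu$, with $\|\Phi_{\mu,n}\|_{C^1}\le K_0$; and $g_n'(x)=\sum_{j=0}^{n-1}\lambda_c^{n-1-j}l^j g'(\tau^j x)$ gives $|g_n'|\le\|g'\|_\infty l^n/(l-\lambda_c)$, so $\lambda_{uu}^{-n}|g_n'|\le C_g$ for all $n$. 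Since $l\ge2$, for $n$ large $\lambda_{uu}^{-n}(1+K_0)<\tfrac12$, and the recursion for $\alpha_{\mu,n}$ then forces $\|\alpha_{\mu,n}\|_\infty\le R_0:=2(C_g+K_0)$, uniformly in $n$ and $\mu$ (likewise $\|\alpha_0\|_\infty\le R_0$). Setting $\beta:=\alpha_{\mu,n}-\alpha_0$ and subtracting the two recursions along the common itinerary yields
\begin{align*}
\beta(F_{\mu,n}p)=\lambda_{uu}^{-n}\lambda_c^n\,\beta(p)+\lambda_{uu}^{-n}\big(\partial_x\Phi_{\mu,n}(p)+\partial_y\Phi_{\mu,n}(p)\,\alpha_{\mu,n}(p)\big),
\end{align*}
so $\beta$ is the bounded fixed point of a graph transform that contracts by $\lambda_c^n/\lambda_{uu}^n<1$ with source term bounded by $\lambda_{uu}^{-n}K_0(1+R_0)$; hence $\|\beta\|_\infty\le 2K_0(1+R_0)\lambda_{uu}^{-n}$, which tends to $0$. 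Choosing $n_0$ so that this bound times the angle‑versus‑slope constant is $<\epsilon_1$ proves the proposition for all $x\in\Lambda_{\mu,n}$, $\mu\in[0,1]$, $n\ge n_0$.

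The main obstacle is the bookkeeping underlying the common‑symbolic‑space step: one must check that each $F_{\mu,n}$ is a diffeomorphism carrying the declared partially hyperbolic splitting, that $\Lambda_{\mu,n}$ is genuinely coded by backward $\tau$‑itineraries, and that $\alpha_0$ may be evaluated at $x\in\Lambda_{\mu,n}$ along that itinerary (using that the formula for $\alpha_0$ depends only on the backward $x$‑orbit). Once this coding is in place, the crux is the elementary but essential point that $\Phi_{\mu,n}$, though \emph{not} $C^0$‑small, enters the slope recursion only through the uniformly bounded quantities $\partial_x\Phi_{\mu,n}$ and $\partial_y\Phi_{\mu,n}$, and is therefore overwhelmed by the factor $\lambda_{uu}^{-n}\to0$; the rest is the contraction estimate above.
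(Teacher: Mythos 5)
Your proposal is correct and takes essentially the same route as the paper: both characterize the slope $\alpha^{uu}$ as the fixed point of the contraction coming from invariance of $(1,\alpha,0)$, use that the $(x,z)$-dynamics (hence the backward itineraries on which the slope depends) are untouched by $\Phi_{\mu,n}$, and conclude because the perturbation enters only through the uniformly bounded derivatives $\partial_x\Phi_{\mu,n},\partial_y\Phi_{\mu,n}$ multiplied by $\lambda_{uu}^{-n}\to 0$. Your direct subtraction of the two recursions for $\beta=\alpha_{\mu,n}-\alpha_0$ is the same computation as the paper's estimate $d(\alpha^{uu}_{\mu,n},\alpha^{uu}_{0,n})\leq d(T_{\mu,n},T_{0,n})/(1-\eta)$, with the minor bonus of an explicit $O(\lambda_{uu}^{-n})$ rate.
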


\begin{proof}
For parameters $\mu$ and $n$, the unstable direction $E^{uu}_{\mu}(q)=(1,\alpha^{uu}_{\mu}(q),0)$ satisfies $DF_{\mu,n}(q)\cdot(1,\alpha^{uu}(q),0)=\lambda_{uu}^n(1,\alpha^{uu}(F_{\mu,n}(q)),0)$ for every $q\in \Lambda_{\mu,n}$, which can be rewritten as:
\begin{align*}
\alpha^{uu}(F_{\mu,n}(q))=\lambda_{uu}^{-n}g_{n}+\lambda_{uu}^{-n}\frac{\partial\Phi_{\mu,n}}{\partial x} + \lambda_{uu}^{-n}\Big(\lambda_{c}^{n}+\frac{\partial\Phi_{\mu,n}}{\partial y}\Big)\alpha^{uu}(q)
\end{align*}

The function $\alpha^{uu}$ is obtained as the fixed point of the operator $T_{\mu,n}$ given by
\begin{align*}
T_{\mu,n}(\alpha)(q)=
\lambda_{uu}^{-n} \frac{d g_{n}}{dx}(F_{\mu,n}^{-1}(q)) &+ \lambda_{uu}^{-n} \frac{\partial\Phi_{\mu,n}}{\partial x}(F_{\mu,n}^{-1}(q)) \\
+ &\lambda_{uu}^{-n}\Big(\lambda_{c}^{n} +\frac{\partial\Phi_{\mu,n}}{\partial y}\Big)\alpha(F_{\mu,n}^{-1}(q))
\end{align*}

This fixed point $\alpha^{uu}_{\mu,n}$ is well defined because the operator $T_{\mu,n}$ is a contraction in the Banach space of continuous functions $\beta:\Lambda\rightarrow \R$ endowed with the norm of the supremum, in fact, the relation $\lambda_{\mu,n} = \lambda_{uu}^{-n}\left(\lambda_{c}^{n}+\frac{\partial\Phi_{\mu,n}}{\partial y}\right)\leq\eta<1$ guarantees that the operator is a contraction.

It remains to check that $||\alpha^{uu}_{\mu,n} - \alpha^{uu}_{0,n}||<\epsilon_{1}$, which is valid when the respective operators are close. Writing $T_{\mu,n}(\alpha)=A_{\mu,n} + \lambda_{\mu,n}\alpha\circ F_{\mu,n}^{-1}$, we have:
\begin{align*}
d(\alpha^{uu}_{\mu,n} , \alpha^{uu}_{0,n}) &= d(T_{\mu,n}(\alpha^{uu}_{\mu,n}) , T_{0,n}( \alpha^{uu}_{0,n}) )  \\
&\leq d(T_{\mu,n}(\alpha^{uu}_{\mu,n}) , T_{\mu,n}( \alpha^{uu}_{0,n}) ) + d(T_{\mu,n}(\alpha^{uu}_{0,n}) , T_{0,n}( \alpha^{uu}_{0,n}) )  \\
&\leq || T_{\mu,n} || \cdot  d(\alpha^{uu}_{\mu,n} , \alpha^{uu}_{0,n}) + || A_{\mu,n} - A_{0,n} || + ||\lambda_{\mu,n} \alpha^{uu}_{0,n} - \lambda_{0,n} \alpha^{uu}_{0,n} ||
\end{align*}

Therefore
\begin{align*}
d(\alpha^{uu}_{\mu,n} , \alpha^{uu}_{0,n}) &\leq \frac{d(T_{\mu,n},T_{0,n})}{1 - || T_{\mu,n} ||}\\
 &\leq  \frac{ ||A_{\mu,n} - A_{0,n} || + (\lambda_{\mu,n} - \lambda_{0,n}) ||\alpha_{0,n} ||}{1 - \eta}
\end{align*}

Since $\alpha_{n}(q)$ depends only in the $x$-coordinate of $q$, and since the $x$-coordinate of $F_{\mu,n}^{-1}(p)$ and of $F_{0,n}^{-1}(q)$ are the same, we have:
\begin{align*}
(T_{\mu,n}-T_{0,n})(\beta) (q) =
&(\lambda_{uu})^{-n} \left[\frac{\partial\Phi_{\mu,n}}{\partial x}(F_{\mu,n}^{-1}(q)) \right] +\\
&(\lambda_{uu})^{-n} \left[\left(\lambda_{c}^{n}+\frac{\partial\Phi_{\mu,n}}{\partial y}\right)\beta(F_{\mu,n}^{-1}(q)) - \left(\lambda_{c}^{n}\right)\beta(F_{0,n}^{-1}(q)) \right]
\end{align*}

Each line above goes to zero when $n\rightarrow\infty$, since $\lambda_{c}^{n}(\lambda_{uu})^{-n} \overset{n\rightarrow\infty}{\rightarrow} 0$ and $\left|\frac{\partial\Phi_{\mu,n}}{\partial x}\right|$, $\left|\frac{\partial\Phi_{\mu,n}}{\partial y}\right|$ are bounded. Then we take $n$ large, $D_{1}$ and $\lambda_{1}$ small such that:
\begin{align*}
||\alpha^{uu}_{\mu,n} - \alpha^{uu}_{0,n}|| \leq \frac{D_{1}+\lambda_{1} ||\alpha^{uu}||}{1-\eta}<\epsilon_{1}.
\end{align*}
\end{proof}


The nonhyperbolicity of $\Lambda_{\mu,n}$ will follow from the fact that the attractor admits hyperbolic periodic points of different indexes and that it is robustly transitive.

\begin{prop}
There exists an integer $n_{1}$ such that the attractor $\Lambda_{n,\mu}$ of $F_{\mu,n}$ is robustly transitive for every $\mu\in[0,1]$ and every $n\geq n_{1}$.
\end{prop}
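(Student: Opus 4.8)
The plan is to deduce robust transitivity from robust minimality of the strong unstable foliation. I would first reduce the statement to the following: there exist $n_{1}\in\N$ and a $C^{1}$-neighbourhood $\mathcal{U}$ of the family $\{F_{\mu,n}:\mu\in[0,1],\ n\geq n_{1}\}$ such that for every $g\in\mathcal{U}$ every strong unstable leaf of $g$ is dense in the attractor $\Lambda_{g}=\bigcap_{k\geq0}g^{k}(M)$. Granting this, transitivity of $\Lambda_{g}$ follows from the standard expanding-disk argument: since $\Lambda_{g}=\bigcup_{x}W^{uu}_{g}(x)$ and $E^{uu}$ is uniformly expanded, the forward iterates $g^{m}(D)$ of any unstable disk $D\subset\Lambda_{g}$ grow into arbitrarily large unstable disks, and by minimality together with compactness there is $R=R(\epsilon)$ so that every unstable disk of radius $R$ is $\epsilon$-dense in $\Lambda_{g}$; hence $g^{m}(D)$ meets every $\epsilon$-ball. (That $\Lambda_{g}$ is again a partially hyperbolic attractor with the same splitting is automatic, these being $C^{1}$-open conditions.)

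The structural fact I would exploit is that $F_{\mu,n}$ is an \emph{exact skew product}: its first component is $\tau^{n}(x)$ and its third is $\lambda_{ss}^{n}z+h_{n}(x)$, so $\pi_{x,z}\circ F_{\mu,n}=G_{n}\circ\pi_{x,z}$, where $\pi_{x,z}$ forgets the $y$-coordinate and $G_{n}(x,z)=(\tau^{n}(x),\lambda_{ss}^{n}z+h_{n}(x))$ is the unperturbed Smale--Williams type solenoid: a topologically mixing uniformly hyperbolic attractor with minimal strong unstable foliation and a finite Markov structure by boxes $P_{[k]}$, $[k]=(k_{0},\dots,k_{t-1})$, realising the full shift on $\lambda_{uu}^{n}$ symbols, with $(x,z)$-diameter tending to $0$ as $t\to\infty$. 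One has $\pi_{x,z}(\Lambda_{\mu,n})=\Lambda^{G}_{n}:=\bigcap_{k}G_{n}^{k}(S^{1}\times[-1,1])$, and, since $h_{n}$ is piecewise constant (so $E^{uu}_{F_{\mu,n}}$ lies in the $xy$-planes, as in the construction), $\pi_{x,z}$ carries unstable leaves of $F_{\mu,n}$ diffeomorphically onto unstable leaves of $G_{n}$. I would then choose $n_{1}$ large so that, uniformly in $\mu\in[0,1]$ and $n\geq n_{1}$: by Proposition~6.3 the direction $E^{uu}_{F_{\mu,n}}$ --- hence $E^{uu}_{g}$ for $g$ in a small $C^{1}$-neighbourhood --- lies in a thin cone about the horizontal; $\pi_{S^{1}}\circ g$ is a $C^{1}$-small and still uniformly expanding perturbation of $\tau^{n}$, of degree $\lambda_{uu}^{n}$; and the Markov box structure of the $(x,z)$-factor persists.

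With this in hand the argument runs as follows. For $g\in\mathcal{U}$, an unstable leaf $W^{uu}_{g}(p)$ is a complete immersed line tangent to a thin horizontal cone field, so its $x$-coordinate is strictly monotone along it; the leaf having infinite length with the $x$-component of its tangent bounded below, $\pi_{S^{1}}(W^{uu}_{g}(p))=S^{1}$, i.e. it wraps around $S^{1}$ infinitely, and likewise do the forward iterates of any unstable disk. Since the base is (robustly) the full shift on $\lambda_{uu}^{n}$ symbols, every finite itinerary is realised on some sub-arc, so $W^{uu}_{g}(p)$ meets every box $P_{[k]}$; hence $\pi_{x,z}(\overline{W^{uu}_{g}(p)})$ meets arbitrarily small neighbourhoods of every point of $\Lambda^{G}_{n}$, and so equals $\Lambda^{G}_{n}$ --- the leaf is dense in the $(x,z)$-factor. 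It remains to promote this to genuine density in $\Lambda_{g}$, that is, to control the central $y$-coordinate. For $F_{\mu,n}$ this is where the exact factor $G_{n}$ and the localization of $\Phi_{\mu,n}$ enter: since $\Phi_{\mu,n}$ is supported in the thin column $\{\,|x-q_{x}|<\delta\,\}$ with $\delta<\tfrac{1}{10\lambda_{uu}}$, the $y$-dynamics is the uniform contraction $y\mapsto\lambda_{c}^{n}y+g_{n}(x)+(\,\cdot\,)$ outside a closed set of itineraries with empty interior, so the fibre $\Lambda_{\mu,n}\cap\pi_{x,z}^{-1}(\xi)$ has small $y$-diameter --- a single point off that set --- and $\Lambda_{\mu,n}$ is, up to that set, the graph over $\Lambda^{G}_{n}$ of a continuous function; thus an arc that is $\epsilon$-dense in the $(x,z)$-factor is $\epsilon'$-dense in $\Lambda_{\mu,n}$. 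Transferring this to $g\in\mathcal{U}$ is done by a graph-transform / $C^{1}$-proximity argument, again using the exactness of the $(x,z)$-dynamics of $F_{\mu,n}$ and the continuity of the unstable lamination. Uniformity in $\mu\in[0,1]$ and $n\geq n_{1}$ then follows from compactness of $[0,1]$, continuity of $\mu\mapsto F_{\mu,n}$, and the uniformity of all the estimates above.

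The main obstacle is precisely this control of the central direction in the last step. Unlike $x$ and $z$, which are robustly expanded and contracted, $E^{c}$ is only neutral and $\Phi_{\mu,n}$ makes it expand near $q$, so the Markov boxes of $\Lambda_{g}$ need not shrink in the $y$-direction and one cannot simply invoke continuity of the foliation. The mechanism that has to be made quantitative is that the non-contracting behaviour of $E^{c}$ is confined to the dynamically negligible part of the attractor lying over $B(q,\delta)$ --- a thin column occupying a small fraction of $S^{1}$ --- while the $(x,z)$-factor is an honest hyperbolic solenoid whose minimality, and the uniform density of its unstable segments, is robust; so density in the $(x,z)$-factor is never lost and lifts back to density in the $y$-direction off a set of empty interior.
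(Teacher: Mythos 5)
Your plan is genuinely different from the paper's (you aim at robust \emph{minimality} of $\mathcal{F}^{uu}$ via the $(x,z)$-solenoid factor, the paper runs a Ma\~n\'e-type two-disk argument), but as it stands it has a real gap at exactly the step you flag as the main obstacle, and the mechanism you propose there does not work. The assertion that $\Lambda_{\mu,n}$ is, off a closed set with empty interior, the graph of a continuous function over $\Lambda^{G}_{n}$, and that consequently an arc which is $\epsilon$-dense in the $(x,z)$-factor is $\epsilon'$-dense in $\Lambda_{\mu,n}$, is not correct. For $\mu$ near $1$ the fibre $\pi_{x,z}^{-1}(\xi)\cap\Lambda_{\mu,n}$ over the itinerary of the deformed periodic point $q$ (and over every itinerary whose backward orbit spends a sufficient frequency of time in the support of $\Phi_{\mu,n}$) is a nondegenerate central interval --- this is precisely the source of the nonhyperbolicity --- and such itineraries are dense in the base. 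Density of the projected leaf in the $(x,z)$-factor gives no information about its $y$-coordinates over those fibres, so the closure of the leaf could a priori miss parts of those central segments; ``graph off a negligible set'' does not transfer density from the factor to $\Lambda_{\mu,n}$. (Even the bookkeeping is delicate: the set of points whose backward orbit meets the deformation column infinitely often is residual, hence topologically large, not a closed set of empty interior.) The difficulty is compounded for a perturbation $g$: the exact skew-product structure is destroyed, there is no invariant $(x,z)$-projection, and since $E^{c}$ is only neutral there is no a priori contraction with which to run the ``graph-transform / $C^{1}$-proximity'' step you invoke. Finally, note that your reduction requires robust minimality of $\mathcal{F}^{uu}$, a strictly stronger property than the transitivity to be proved, and it is never established.

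The paper avoids controlling the center direction over the deformation region altogether. It uses only a quantitative consequence of minimality of $\mathcal{F}^{uu}$ for the unperturbed $F_{0}$: every unstable curve of length $\geq L$ meets every center-stable ball of radius $\varrho$, and this $(L,\varrho)$-density persists for $F_{\mu,n}$ ($n$ large) and its $C^{1}$-perturbations because the unstable direction stays in a thin cone (Proposition 6.2) while the center-stable foliation is unchanged by the deformation. Then, given open sets $U,V$ meeting $\Lambda$, Claim 6.1 produces a point $x\in U\cap\Lambda$ whose backward orbit eventually avoids $D$ (its proof uses that $Df|_{E^{cs}}$ contracts area, so backward iterates of center-stable disks grow, and that $D$ is a thin column); since outside $D$ the center is contracted by $f$, the local center-stable disk of $x$ inside $U$ grows under backward iteration to a definite internal radius, while a forward iterate of an unstable curve in $V$ reaches length $L$; the two must intersect, giving $f^{k}(V)\cap U\neq\emptyset$ with all constants robust. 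If you want to salvage your route, you would need a genuine argument that unstable leaves accumulate on the whole central fibres created by the pitchfork (not just on their $(x,z)$-shadows); the paper's backward-growth-of-$W^{cs}$ argument is the standard substitute for exactly this missing control.
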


\begin{proof}[Proof of Proposition 6.3]

In our situation, the deformation is done inside a set $D$ such that the dynamics outside $D$ contracts vectors tangent  to the central direction. Hence, we can make an argument similar to the one of Ma\~n\'e \cite{Ma,BV} to prove the robust transversality, 

Since the unstable foliation for $F_{0}$ is minimal in $\Lambda_{0}$, for every $\varrho$ exists a corresponding $L>0$ such that the unstable foliation is $(\frac{L}{2},\frac{\varrho}{2})$-dense in center-stable manifolds, that is, every center-stable ball of radius $\frac{\varrho}{2}$ intersects every unstable curve $\gamma^{uu}$ of length greater than $\frac{L}{2}$.

Applying Proposition 6.2 for an $\epsilon_{1}$ sufficiently small, there exists $n_{1}$ such that the unstable foliation of $F_{\mu,n}$ is $(L,\varrho)$-dense in center-stable manifolds for every $n\geq n_{1}$ (the center-stable foliation is the same for $F_{\mu,n}$ and $F_{0}$). Let  $\mathcal{U}_{1}$ be  a neighborhood of $F_{\mu,n}$ such that the same $(L,\varrho)$-density holds for every $f$ in this open set $\mathcal{U}_{1}$. In what follows, we assume that $\delta\leq\frac{1}{10 \lambda_{uu}}\leq \frac{1}{20}$.

\begin{claim}
For every open set $U$ intersecting $\Lambda$ there exist a point $x\in U\cap\Lambda$ and an integer $N\in\N$ such that $f^{-n}(x)\notin D$ for every $n\geq N$.
\end{claim}
\begin{proof}[Proof of Claim 6.1] 
Take a negative iterate $\tilde{U}$ of $U$ with central-stable diameter greater than $100\delta$, which exists since $Df_{|_{E^{cs}}}$ contracts area. Considering $R=3\lambda_{ss}$, we note that every center-stable leaf of radius greater than $R$ contains a center-stable ball $B$ of radius $\frac{R}{3}$ such that $f^{-1}(B)$ does not intersect $D$.

Let $\Gamma_{1}$ be a subset of $\tilde{U}$ such that $f^{-1}(\Gamma_{1})\cap D=\emptyset$. Since the center-stable diameter of $f^{-1}(\Gamma_{1})$ is greater than $R$, we can consider $\Gamma_{2}\subset f^{-1}(\Gamma_{1})$ such that $f^{-1}(\Gamma_{2})\cap D=\emptyset$, inductively we consider a sequence $\Gamma_{n}\subset f^{-1}(\Gamma_{n-1})$ such that $f^{-1}(\Gamma_{n})\cap D = \emptyset$. So, any $x\in\underset{n}{\cap}{f^{n}(\Gamma_{n})}$ is a point that satisfies the conclusion of the Claim.
\end{proof}

Given two open subsets $U$ and $V$ of $\Lambda$, we consider a point $x\in U\cap\Lambda$ given by Claim 6.1 and an unstable curve contained in $V$. Iterate the unstable curve until it has length greater than $L$ and preiterate the center-stable leaf of $x$ contained in $U$ until it has internal radius greater than $2\delta$. Due to the $(L,\delta)$-density we know that these sets have non-empty intersection, so $U\cap f^{n}(V)\neq\emptyset$ for some $n\geq 0$.
\end{proof}

\subsection{Proof of Theorem C}
\begin{prop}
There exists an integer $n_{2}\in\N$ such that the Transversality Condition holds for the dynamics $F_{\mu,n}$, for every $\mu\in[0,1]$ and every $n\geq n_{2}$.
\end{prop}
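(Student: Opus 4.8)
The plan is to reduce the Transversality Condition for $F_{\mu,n}$ to the one already proved for $F_{0}$ (equivalently for $F_{0}^{n}$), exploiting that the deformation changes only the strong unstable direction and is uniformly damped once $n$ is large. The key structural observation is that, since $\Phi_{\mu,n}$ depends only on $(x,y)$ and is added only to the $y$-component while the $z$-component $\lambda_{ss}^{n}z+h_{n}(x)$ is that of $F_{0}^{n}$ with $h_{n}$ locally constant, the bundle $E^{ss}_{F_{\mu,n}}$ is the vertical direction and $E^{cu}_{F_{\mu,n}}$ is the horizontal $xy$-plane, exactly as for $F_{0}$. Hence $\Fcal^{ss}_{F_{\mu,n}}$ is the smooth vertical foliation --- identical for all $\mu,n$, so (H3) holds trivially --- the stable projection $\pi^{ss}$ is the vertical projection onto the $z$-coordinate, and the center-unstable leaves are pieces of horizontal planes. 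Writing $E^{uu}_{\mu,n}(p)=(1,\alpha^{uu}_{\mu,n}(p),0)$, for unstable curves $\gamma_{1},\gamma_{2}$ the projected curves $\pi^{ss}(\gamma_{i})$ coincide, after identifying center-unstable leaves by their $(x,y)$-coordinates, with $\gamma_{1},\gamma_{2}$ themselves, so $\theta$-transversality of $\pi^{ss}(\gamma_{1})$ and $\pi^{ss}(\gamma_{2})$ reduces to a lower bound for $\angle\big((1,\alpha^{uu}_{\mu,n}(p)),(1,\alpha^{uu}_{\mu,n}(p'))\big)$ with $p\in\gamma_{1}$, $p'\in\gamma_{2}$; since the $\alpha^{uu}_{\mu,n}$ are uniformly bounded (from the fixed-point construction of Proposition~6.3), this is in turn equivalent to a lower bound for $|\alpha^{uu}_{\mu,n}(p)-\alpha^{uu}_{\mu,n}(p')|$.

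Second, I would invoke Proposition~5.3 to reduce the verification of (H1) for $F_{\mu,n}$ to pairs of unstable curves whose stable distance lies in a fundamental domain $J=[a,Ia]$, with $I=\lambda_{ss}^{-n}$. Crucially the lower endpoint $a$ can be fixed independently of $n$: the estimate in the proof of Propositions~6.1 and~6.2 gives a continuous increasing function $\epsilon\mapsto C(\epsilon)>0$ with the property that two unstable curves of $F_{0}$ at stable distance greater than $\epsilon$ satisfy $|\alpha^{uu}_{F_{0}}(p)-\alpha^{uu}_{F_{0}}(p')|\geq C(\epsilon)$ for every $p$ on one and $p'$ on the other. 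Since the $x$-dynamics and the vertical geometry are unaffected by the deformation, the same lower bound holds for unstable curves of $F_{\mu,n}$, with $E^{uu}_{F_{0}^{n}}=E^{uu}_{F_{0}}$ in place of $E^{uu}_{\mu,n}$. Setting $C_{0}:=C(a)>0$, this separation is uniform in $\mu$ and $n$; the growth of $I$ with $n$ is harmless, since only the lower endpoint $a$ matters here.

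Third, I would fix the threshold. Applying Proposition~6.3 with $\epsilon_{1}:=C_{0}/3$ gives $n_{2}\in\N$ such that $d\big(E^{uu}_{F_{\mu,n}}(x),E^{uu}_{F_{0}^{n}}(x)\big)<\epsilon_{1}$ for all $x\in\Lambda_{\mu,n}$, all $\mu\in[0,1]$ and all $n\geq n_{2}$, hence $|\alpha^{uu}_{\mu,n}(x)-\alpha^{uu}_{F_{0}}(x)|<C_{0}/3$. Then, for $n\geq n_{2}$ and curves $\gamma_{1},\gamma_{2}$ of $F_{\mu,n}$ with $d^{ss}(\gamma_{1},\gamma_{2})\in J$,
\[
|\alpha^{uu}_{\mu,n}(p)-\alpha^{uu}_{\mu,n}(p')|\ \geq\ C_{0}-2\,\frac{C_{0}}{3}\ =\ \frac{C_{0}}{3}>0
\]
for all $p\in\gamma_{1}$, $p'\in\gamma_{2}$, uniformly in $\mu$ and $n$. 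Combined with the uniform bound on $\|\alpha^{uu}_{\mu,n}\|$ this yields a uniform angle $\theta_{2}>0$, and since the bound is valid for every pair of points it persists for neighborhoods of any fixed radius $r_{2}$ below the constraint in the definition of transversality. Thus $F_{\mu,n}$ satisfies the hypotheses of Proposition~5.3 with constants $a,\epsilon_{1},\theta_{2},L,r_{2}$ uniform over $\mu\in[0,1]$ and $n\geq n_{2}$, and Proposition~5.3 yields the Transversality Condition for every such $F_{\mu,n}$.

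The main obstacle is that $F_{\mu,n}$ is \emph{not} a small $C^{1}$-perturbation of $F_{0}^{n}$: the bump $\Phi_{\mu,n}$ has $C^{1}$-norm of order $\lambda_{c}^{+}$, which does not decay with $n$, so one cannot simply appeal to the openness of (H1) (Proposition~5.4) to pass from $F_{0}^{n}$ to $F_{\mu,n}$. What rescues the argument is the damping factor $\lambda_{uu}^{-n}$ in the graph-transform operator for $E^{uu}$ (the operator $T_{\mu,n}$ of Proposition~6.3), which makes the influence of the deformation on the unstable direction arbitrarily small once $n$ is large, together with the reduction to a fixed fundamental domain via Proposition~5.3, so that the separation constant $C_{0}$ controlling transversality does not degenerate as $n$ grows.
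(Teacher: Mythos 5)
Your proposal is correct and takes essentially the same route as the paper: reduce the verification of (H1) to pairs of unstable curves whose stable distance lies in a fixed fundamental domain (Proposition 5.3), use the uniform convergence $E^{uu}_{F_{\mu,n}}\to E^{uu}_{F_{0}^{n}}$ for large $n$ to transfer the separation of projected unstable directions from $F_{0}$ to $F_{\mu,n}$, and conclude a uniform angle bound $\theta_{2}$. The only cosmetic difference is that the paper packages the transfer via a uniform-width family of unstable cones that are pairwise $\tfrac{\theta(a)}{2}$-transversal, whereas you run the same estimate directly through the slope function $\alpha^{uu}$ and a triangle inequality with threshold $\epsilon_{1}=C_{0}/3$.
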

\begin{proof}
As seen in Proposition 6.1, the transversality holds for every $F_{0,n}$. Fix $a\in(0,\frac{\lambda_{ss}}{10})$, a fundamental stable domain $J=[a,\lambda_{ss}^{-1}a]$ and consider $\theta(a)$ a lower bounf of the angle of projections of unstable curves with $d^{ss}\in J$ for $F_{0}$.

Since the stable projection is the vertical one, for each unstable curve we consider $\omega>0$ small such that the center-unstable cone of width $\omega$ around the unstable direction is $\frac{\theta(a)}{2}$-transversal to each other center-unstable cone of width $\omega$ around other unstable direction with stable distance between them contained in the interval $J$, by compactness this width $\omega$ can be taken uniform. Fixing this family of unstable cones of width $\omega$ containing the original $E^{uu}_{0,n}$ we have, by Proposition 6.2, that there exists $n_{2}\in\N$ such that this family of cones contains the unstable direction of $F_{\mu,n}$ for every $\mu\in[0,1]$ and every $n\geq n_{2}$. This implies that $\angle (\pi^{ss}(E^{uu}_{\mu,n}(x_{1})),\pi^{ss}(E^{uu}_{\mu,n}({x_{2}})))>\frac{\theta(a)}{2}$ whenever $d^{ss}(x_{1},x_{2})>a$. By Proposition 5.1, this is enough to guarantee the transversality.
\end{proof}

\begin{proof}[Proof of Theorem C]
Take $F_0$ as in any of the two examples above and construct $F_{\mu,n}$ as before with $\lambda_{c}^{+}>1$ such that $\frac{(\lambda_{c}^{+})^{2}}{3\lambda_{c}}<1$. We have that $\frac{(\lambda_{c}^{+})^{2}}{(3\lambda_{c})^{n}}<1$ (central direction is neutral for $F_{\mu,n}$), which guarantees Condition (H2). Taking $n_{3}\in\N$ such that the Proposition 6.3 is valid for every $F_{\mu,n}$, $\mu\in[0,1]$, $n\geq n_{3}$, it follows that the Transversality Condition holds for every $F_{\mu,n}$. The $C^{1}$ regularity follows by choosing $\lambda_{ss}<1$ with $\frac{3\lambda_{ss}}{\lambda_{c}^{+}}<1$ (this condition implies the $C^{1}$ regularity of the stable foliation and also the continuity of this foliation in the $C^{1}$-topology). The attractor is robustly dynamically coherent because the central direction for $F_{\mu,n}$ is of class $C^{1}$, actually, $E^{c}= y$-direction, $E^{cu}= xy$-plane, $E^{cs}=yz$-plane for every $F_{\mu,n}$ and Theorems 7.1 and 7.4 of \cite{HPS} guarantees that these laminations are structurally stable. Taking $N>n_{2}$ as given in Proposition 6.3, the map $F_{1,N}$ is robustly nonhyperbolic because it has periodic points of different indexes and it is robustly transitive.
\end{proof}


\begin{thebibliography}{A}

\bibitem{ABV} Alves, J.; Bonatti, C.; Viana, M.; \tb{SRB measures for partially hyperbolic systems whose central directions is mostly expanding}, \ti{Invent. Math., 140, 351-398},  (2000).

\bibitem{araujo} Araujo, V.; \tb{Attractors and time averages for random maps}, \ti{Annales Inst. Henri Poincar\'e - Analyse Non-Lin\'eaire, 17, 307-369}, (2000).

\bibitem{araujo.melbourne} Araujo, V.; Melbourne, I.; \tb{Existence and smoothness of the stable foliation for sectional hyperbolic attractors}, \ti{preprint, arXiv:1604.06924}, (2016).



\bibitem{Be.Ca} Benedicks, M.; Carleson, L.; \tb{The dynamics of the H\'enon map}, \ti{Annals of Math., 133, 73-169}, (1991).

\bibitem{Be.Vi} Benedicks, M.; Viana, M.; \tb{Solution of the basin problem for H\'enon-like attractors}, \ti{Invent. Math., 143, 375–434},  (2001).

\bibitem{Be.Yo} Benedicks, M.; Young L.-S.; \tb{Sinai-Bowen-Ruelle measures for certain H\'enon maps}, \ti{Invent. Math., 112, 541–576},  (1993).

\bibitem{BDV} Bonatti, C.; Diaz, L.; Viana, M.; \tb{Dynamics Beyond Uniform Hyperbolicity}, \ti{Encyclopaedia of Mathematical Sciences, vol. 102, Springer Verlag}, (2005).

\bibitem{BV} Bonatti, C.; Viana, M.; \tb{SRB measures for partially hyperbolic systems whose central directions is mostly contracting}, \ti{Israel J. of Math., 115, 157-193}, (2000).
 
\bibitem{Bowen} Bowen., R.; \tb{Equilibrium states and the ergodic theory of Anosov diffeomorphisms}, \ti{Lect. Notes in Math., vol. 470 , Springer Verlag}, (1975). 

\bibitem{Bowen.Ruelle} Bowen, R.; Ruelle, D.; \tb{The ergodic theory of Axiom A flows}, \ti{Invent. Math., 29, 181-202}, (1975).
 
\bibitem{Carvalho} Carvalho, M.; \tb{Sinai-Ruelle-Bowen measures for N-dimensional derived from Anosov diffeomorphisms}, \ti{Ergodic Theory and Dynamical Systems, 13, 21-44}, (1993).

\bibitem{measurability} Castaing, C.; Valadier, M.; \tb{Convex Analysis and Measurable Multifunctions}, \textit{Lect. Notes in Math., vol 580}, Springer-Verlag, (1977).

\bibitem{C} Chernov, N.; \tb{Markov approximations and decay of correlations for Anosov flows}, \ti{Annals of Math., 147, 269-324}, (1998).

\bibitem{D} Dolgopyat, D.; \tb{On Decay of correlations in Anosov flows}, \ti{Annals of Math., 147, 357-390}, (1998).
 
\bibitem{HPS} Hirsch, M.; Pugh, C.; Shub, M.; \tb{Invariant manifolds}, \textit{Lect. Notes in Math., vol 583}, Springer-Verlag, (1977).

\bibitem{HHU} Hertz, F. R.; Hertz, J. R.; Ures, R.; \tb{A Survey on Partially Hyperbolic Dynamics}, \textit{Partially hyperbolic dynamics, laminations, and Teichm¨uller flow, Fields Inst. Commun.}, vol. 51, Amer. Math. Soc., (2007).

\bibitem{L} Liverani, C.; \tb{On Contact Anosov Flows}, \ti{Annals of Math., 159, 1275-1312}, (2004).

\bibitem{lyubich} Lyubich, M.; \tb{Almost every real quadratic map is either regular or stochastic}, \ti{Annals of Math., 156, 1-78,} (2002).

\bibitem{Ma} Ma\~n\'e, R.; \tb{Contributions to the Stability Conjecture}, \ti{Topology, 17, 383-396}, (1978).


\bibitem{Palis2} Palis, J.; \tb{A Global Perspective for Non-Conservative Dynamics}, \ti{Ann. Inst. Henri Poincar\'e, Analyse Non Lineaire, Vol. 22}, (2005).

\bibitem{Pesin} Pesin, Y.; \tb{Dynamical systems with generalized hyperbolic attractors: hyperbolic, ergodic and topological properties}, \ti{Ergod. Th.  Dynam. Sys., 12, 123-151}, (1992).

\bibitem{Pe,Si} Pesin, Y.; Sinai, Y.; \tb{Gibbs measures for partially hyperbolic attractors}, \ti{Ergod. Th.  Dynam. Sys., 2, 417-438}, (1982).


\bibitem{Pujals} Pujals, E.; \tb{Density of hyperbolicity and homoclinic bifurcations for topologically hyperbolic sets}, \ti{Discr. and Cont. Dynam. Sys., 20, 337-408}, (2008).

\bibitem{Ruelle} Ruelle, D.; \tb{A measure associated with Axiom A attractors}, \ti{Amer. J. Math., 98, 619-654}, (1976).

\bibitem{Sataev} Sataev, E.; \tb{Invariant measures for hyperbolic maps with singularities}, \ti{Russ. Math. Surveys, 471, 191-251}, (1992).

\bibitem{Sinai} Sinai, Y.; \tb{Gibbs measure in ergodic theory}, \ti{Russian Math. Surveys, 27, 21-69}, (1972).

\bibitem{fat} Tsujii, M.; \tb{Fat Solenoidal Attractor}, \ti{Nonlinearity, 14, 1011-1027}, (2001).

\bibitem{Tsujii} Tsujii, M.; \tb{Physical measures for partially hyperbolic surface endomorphism}, \ti{Acta Mathematica, 194, 37-132}, (2005).

\bibitem{Tucker} Tucker, W.; \tb{The Lorenz attractor exists}, \ti{C. R. Acad. Sci. Paris, 328, Serie I, 1197-1202}, (1999).

\bibitem{VY} Viana, M.; Yang, J.; \textbf{Physical measures and absolute continuity for one-dimensional center direction}, \ti{Annales Inst. Henri Poincar\'e - Analyse Non-Lin\'eaire, 30, 845-877}, (2013).

\bibitem{ls.young} Young, L. S.; \tb{Statistical properties of dynamical systems with some hyperbolicity}, \ti{Annals of Math., 147, 585-650}, (1998).

\end{thebibliography}
\end{document}